\documentclass[a4paper,12pt]{article}

\usepackage[utf8]{inputenc}
\usepackage[english]{babel}
\usepackage{sectsty}
\usepackage{mathtools,amssymb, amsthm, amsfonts, stmaryrd, mathpazo}
\usepackage[backend=biber]{biblatex}
\addbibresource{uni.bib}
\usepackage{graphicx, subfig}
\usepackage{hhline}
\usepackage{comment}
\usepackage{rotating}
\usepackage{geometry}
\usepackage{titlesec}
\usepackage[absolute]{textpos}
\usepackage{tikz, tikz-qtree, tikz-cd}
\tikzset{
    lablv/.style={anchor=south, rotate=90, inner sep=.5mm},
    labld/.style={anchor=south, rotate=150, inner sep=.8mm},
    lablad/.style={anchor=north, rotate=40, inner sep=1.2mm}
}
\usepackage{pgffor,etoolbox}
\usepackage{pgfplotstable}
\usepackage{longtable}
\usepackage{booktabs}
\pgfplotsset{compat=1.18}
\usepackage{fp}
\usepackage{braket}
\usepackage{xstring}
\usepackage{youngtab ,ytableau}
\usepackage{enumitem}
\usepackage{fancyhdr}
\usepackage{colortbl}
\usepackage{color}
\definecolor{valentia}{rgb}{233,78,82}
\definecolor{titleblue}{rgb}{114, 146, 162}
\usepackage{array}
\usepackage{makecell}
\usepackage{float}
\usepackage{phoenician}
\usepackage{mdframed}
\usepackage{hyperref}
\usepackage{upgreek}
\usepackage{authblk}
\usepackage{multirow}
\usepackage{multicol}
\usepackage{scrextend}
\DeclareFontFamily{U}{mathb}{}
\DeclareFontShape{U}{mathb}{m}{n}{<-5.5> mathb5 <5.5-6.5> mathb6
<6.5-7.5> mathb7 <7.5-8.5> mathb8 <8.5-9.5> mathb9 <9.5-11> mathb10
<11-> mathb12}{}
\DeclareSymbolFont{mathb}{U}{mathb}{m}{n}
\DeclareFontSubstitution{U}{mathb}{m}{n}
\DeclareMathSymbol{\blackdiamond}{\mathbin}{mathb}{"0C}

\DeclareMathAlphabet{\mathscr}{U}{rsfso}{m}{n}
\DeclareMathSymbol{\smin}{\mathbin}{AMSa}{"39}
\usepackage{authblk}

\newcommand{\Addresses}{{
  \bigskip
  \footnotesize

  \textsc{Institut Montpelliérain Alexander Grothendieck, Université Montpellier 2,  Place Eugène Bataillon, 34095 MONTPELLIER Cedex, FRANCE.}\par\nopagebreak
  \textit{E-mail address}: \texttt{raphael.paegelow@umontpellier.fr}
}}

\title{The fixed point locus of the smooth Jordan quiver variety under the action of the finite subgroups of $\SL_2(\C)$}
\author{Raphaël Paegelow}

\begin{document}

\theoremstyle{definition}
\newtheorem{deff}{Definition}[section]
\newtheorem{nota}[deff]{Notation}
\newtheorem{rmq}[deff]{Remark}
\newtheorem{ex}[deff]{Example}

\theoremstyle{plain}

\newtheorem{thm}[deff]{Theorem}
\newtheorem{cor}[deff]{Corollary}
\newtheorem{prop}[deff]{Proposition}
\newtheorem{lemme}[deff]{Lemma}
\newtheorem{fait}[deff]{Fact}
\newtheorem{conj}[deff]{Conjecture}
\newtheorem*{theorem1}{Theorem 1}
\newtheorem*{theorem2}{Theorem 2}
\newtheorem*{theorem3}{Theorem 3}
\newtheorem*{cor1}{Corollary}


\newcommand{\TODO}{\colorbox{red}{\textbf{{TODO}}}}

\newcommand{\DMQ}{\overline{Q^0_{\Gamma}}}
\newcommand{\DMQGf}{\overline{Q_{G^f}}}
\newcommand{\DMQf}{\overline{Q_{\Gamma^f}}}
\newcommand{\DMQJf}{\overline{Q_{\bullet^f}}}

\newcommand{\QVG}{\mathcal{M}^{\Gamma}}
\newcommand{\tQVG}{\tilde{\mathcal{M}}^{\Gamma}}
\newcommand{\QV}{\mathcal{M}}
\newcommand{\tQV}{\tilde{\mathcal{M}}}
\newcommand{\QVTM}{\mathcal{M}^{\Gamma}_{\boldsymbol{\theta}}(M)}
\newcommand{\QVT}{\mathcal{M}_{\theta}}
\newcommand{\QVTdef}{\mathcal{M}^{\lb}_{\theta}}
\newcommand{\tQVTdef}{\tilde{\mathcal{M}}^{\Gamma}_{\boldsymbol{\theta}, \lb}}
\newcommand{\tQVT}{\tilde{\mathcal{M}}^{\Gamma}_{\boldsymbol{\theta}}}
\newcommand{\tQVTM}{\tilde{\mathcal{M}}^{\Gamma}_{\boldsymbol{\theta}}(M)}
\newcommand{\QVTMs}{\mathcal{M}^{\Gamma}_{\boldsymbol{\theta}}(M)}
\newcommand{\tQVTMs}{\tilde{\mathcal{M}}^{\Gamma}_{\boldsymbol{\theta}}(M)}
\newcommand{\tQVTMb}{\tilde{\mathcal{M}}^{\Gamma}_{\boldsymbol{\theta},\lb\delta}(M)}
\newcommand{\QVTMb}{\mathcal{M}^{\lb\delta}_{\theta}(M)}
\newcommand{\QVTMdef}{\mathcal{M}^{\lb\delta}(M)}
\newcommand{\tQVTMdef}{\tilde{\mathcal{M}}^{\lb\delta}(M)}
\newcommand{\QVTd}{\mathcal{M}_{\theta}(d)}
\newcommand{\tQVTd}{\tilde{\mathcal{M}}_{\theta}(d)}
\newcommand{\QVJn}{\Mbullet_{\hspace{0.3em}1}(n)}
\newcommand{\QVJnG}{\Mbullet_{\hspace{0.3em}1}(n)^{\Gamma}}
\newcommand{\QVJndefu}{\Mbullet^{\hspace{0.3em}1}(n)}
\newcommand{\QVJnGdef}{\Mbullet^{\hspace{0.3em}1}(n)^{\Gamma}}
\newcommand{\QVJns}{\Mbullet_{\hspace{0.3em}\theta}(n)}
\newcommand{\QVJndef}{\Mbullet^{\hspace{0.3em}\lb}(n)}
\newcommand{\QVJnb}{\Mbullet^{\hspace{0.3em}\lb}_{\hspace{0.3em}\theta}(n)}
\newcommand{\Mbullet}{\mathcal{M}\hspace{-0.75em}\raisebox{0.8ex}{\scalebox{0.8}{$\bullet$}}}

\newcommand{\Hom}{\mathrm{Hom}}
\newcommand{\Irr}{\mathrm{Irr}}
\newcommand{\GL}{\mathrm{GL}}
\newcommand{\Tr}{\mathrm{Tr}}
\newcommand{\Aut}{\mathrm{Aut}}
\newcommand{\mult}{\mathrm{mult}}
\newcommand{\module}{\mathrm{mod}}
\newcommand{\Dyn}{\mathrm{Dyn}}
\newcommand{\End}{\mathrm{End}}
\newcommand{\SL}{\mathrm{SL}}
\newcommand{\SU}{\mathrm{SU}}
\newcommand{\G}{\mathrm{G}}
\newcommand{\op}{\mathrm{op}}
\newcommand{\RepGa}{\mathbf{McK}}
\newcommand{\Fr}{\mathrm{Fr}}
\newcommand{\Reg}{\mathrm{Reg}}
\newcommand{\id}{\mathrm{id}}
\newcommand{\Res}{\mathrm{Res}}
\newcommand{\Ind}{\mathrm{Ind}}
\newcommand{\Repm}{\mathbf{Rep}}

\newcommand{\Hk}{\mathcal{H}}
\newcommand{\Ck}{\mathscr{X}}
\newcommand{\Xk}{\mathcal{X}}

\newcommand{\Rep}{\mathrm{Rep}}
\newcommand{\RepG}{\mathrm{Rep}}
\newcommand{\RepQ}{\mathcal{R}}
\newcommand{\RepGn}{\mathrm{Rep}_{n}}
\newcommand{\RepGk}{\mathrm{Rep}_{k}}
\newcommand{\CharG}{\mathrm{Char}}
\newcommand{\CharGn}{\mathrm{Char}_{n}}
\newcommand{\CharGk}{\mathrm{Char}_{k}}
\newcommand{\Xstd}{X_{\mathrm{std}}}

\newcommand{\BD}{BD}
\newcommand{\Gamman}{\Gamma_n}
\newcommand{\Dt}{\tilde{D}}
\newcommand{\LDt}{L(\Dt_{l+2})}
\newcommand{\Dtc}{\tilde{D}}
\newcommand{\Ct}{\tilde{C}}
\newcommand{\Dtl}{\tilde{D}_{l+2}}
\newcommand{\WaD}{W({\tilde{D}_{l+2}})}
\newcommand{\WD}{W({D_{l+2})}}
\newcommand{\IG}{\mathrm{Irr}}

\newcommand{\Qc}{Q^{\vee}}
\newcommand{\QDts}{Q(\Dt_{l+2}^{\sigma})[0^+]}
\newcommand{\deltD}{\delta^{\widetilde{BD}_{2l}}}
\newcommand{\av}{\alpha^{\vee}}
\newcommand{\ABDz}{\mathcal{A}^{n,\T_1}_{\BD_{\ell}}}

\newcommand{\Wa}{W_{\Gamma}^{\text{aff}}}

\newcommand{\Taut}{\mathcal{T}_{d}}
\newcommand{\TautT}{\tilde{\mathcal{T}}_{d}}

\newcommand{\bigzero}{\mbox{\normalfont\Large\bfseries 0}}
\newcommand{\rvline}{\hspace*{-\arraycolsep}\vline\hspace*{-\arraycolsep}}
\newcommand{\C}{\mathbb{C}}
\newcommand{\Z}{\mathbb{Z}}
\newcommand{\R}{\mathbb{R}}
\newcommand{\Q}{\mathbb{Q}}
\newcommand{\lb}{\lambda}
\newcommand{\iso}{\xrightarrow{\,\smash{\raisebox{-0.65ex}{\ensuremath{\scriptstyle\sim}}}\,}}
\newcommand{\Sk}{\mathfrak{S}}
\newcommand{\Uk}{\mathcal{U}}
\newcommand{\T}{\mathbb{T}}
\newcommand{\Pro}{\mathscr{P}}
\newcommand{\Resol}{\mathscr{R}}
\newcommand{\Pnlo}{\mathcal{P}_{n,\ell}^{o}}
\newcommand{\IC}{\mathcal{IC}}
\newcommand{\gl}{\mathrm{g}_{\ell}}
\newcommand{\gG}{\mathrm{g}_{\Gamma}}
\newcommand{\gk}{\mathrm{g}_k}
\newcommand{\gll}{\mathrm{g}_{2l}}
\newcommand{\Cln}{C_{\ell,n}}
\newcommand{\Ctwo}{{(\C^2)}}
\newcommand{\oando}{\theta,\lb}
\newcommand{\Ch}{\mathfrak{C}}
\newcommand{\Alf}{\mathfrak{A}}
\newcommand*{\transp}[2][-3mu]{\ensuremath{\mskip1mu\prescript{\smash{\mathrm t\mkern#1}}{}{\mathstrut#2}}}

\newcommand{\qphoe}{\textphnc{q}}
\newcommand{\lphoe}{\textphnc{l}}
\newcommand{\Sphoe}{\textphnc{\As}}
\newcommand{\hethphoe}{\textphnc{\Ahd}}


\DeclareRobustCommand*{\Sagelogo}{
  \begin{tikzpicture}[line width=.2ex,line cap=round,rounded corners=.01ex,baseline=-.2ex]
    \draw(0,0) -- (.75em,0)
      -- (.75em,.7ex) -- (.25em,.7ex)
      -- (.25em,.75\ht\strutbox) -- (.75em,.75\ht\strutbox);
    \draw(2em,0) -- (1.6em,0)
      -- (1.3em,.75\ht\strutbox) -- (.9em,.75\ht\strutbox)
      -- (.9em,0) -- (1.3em,0)
      -- (1.6em,.75\ht\strutbox) -- (2.1em,.75\ht\strutbox)
      -- (2.1em,-\dp\strutbox) -- (1.45em,-\dp\strutbox);
    \draw(3em,0) -- (2.25em,0)
      -- (2.25em,.75\ht\strutbox) -- (2.8em,.75\ht\strutbox)
      -- (2.8em,.7ex) -- (2.35em, .7ex);
  \end{tikzpicture}
}

\maketitle
\section*{Abstract}
In this article, we study the decomposition into irreducible components of the fixed point locus under the action of $\Gamma$ a finite subgroup of $\SL_2(\C)$ of the smooth Nakajima quiver variety of the Jordan quiver. The quiver variety associated with the Jordan quiver is either isomorphic to the punctual Hilbert scheme in $\C^2$ or to the Calogero-Moser space. We describe the irreducible components using quiver varieties of McKay's quiver associated with the finite subgroup $\Gamma$ and we have given a general combinatorial model of the indexing set of these irreducible components in terms of certain elements of the root lattice of the affine Lie algebra associated with $\Gamma$. Finally, we prove that for every projective, symplectic resolution of a wreath product singularity, there exists an irreducible component of the fixed point locus of the punctual Hilbert scheme in the plane that is isomorphic to the resolution.

\section{Introduction}

Fix $\Gamma$ a finite subgroup of $\SL_2(\C)$. The Jordan quiver is the quiver with one vertex and one arrow. This article is dedicated to the study of the $\Gamma$-fixed point locus of the smooth Jordan quiver variety. This quiver variety is isomorphic either to the Calogero-Moser space when the deformation parameter is nonzero or to the punctual Hilbert scheme of points in $\C^2$.
Iain Gordon \cite[Lemma $7.8$]{Gor08} has identified the irreducible components of the $\Gamma$-fixed point locus of the Hilbert scheme of point using quiver varieties of $Q_{\Gamma}$ when $\Gamma$ is of type $A$.
Moreover, Alexander Kirillov Jr. has studied the $\Gamma$-fixed point locus of the punctual Hilbert scheme using quiver varieties. Let us denote by $\Hk_n$ the Hilbert scheme of $n$ points in $\C^2$. He showed, in a concise proof and based on nonconstructive arguments \cite[Theorem $12.13$]{Kir06}, that there exists an isomorphism between the irreducible components of $\Hk_n^{\Gamma}$ and certain quiver varieties.
Finally, C\'edric Bonnaf\'e and Ruslan Maksimau \cite[Theorem $2.11$]{BM21} have described the irreducible components of $\Gamma$-fixed point loci of smooth generalized Calogero-Moser spaces using quiver varieties of $Q_{\Gamma}$ also when $\Gamma$ is of type $A$. Based on this work, the first result of this article is that the irreducible components of the $\Gamma$-fixed point locus of the Jordan quiver variety, denoted by $\QVJnb$, for $(\theta,\lb)\neq (0,0)$ (where $\theta$ is the stability parameter and $\lb$ the deformation parameter) are described in terms of quiver varieties of the McKay quiver of $\Gamma$. From now on, let us refer to quiver varieties associated with the McKay quivers as McKay quiver varieties. More precisely, we prove the following theorem in section \ref{chap_decomp}.\\
\begin{theorem1}
\label{thmI}
Let $(\theta,\lb) \in \mathbb{Q} \times \C$ such that $(\theta,\lb) \neq (0,0)$.
For each integer $n$ and each finite subgroup $\Gamma$ of $\SL_2(\C)$, the variety $\QVJnb^{\Gamma}$ decomposes into irreducible components
\begin{equation*}
\QVJnb^{\Gamma} = \coprod_{d \in \mathcal{A}_{n,\oando}}{\QV_d}
\end{equation*}
\noindent where $\QV_d$ is isomorphic to $\QV_{\theta}^{\lb\delta}(M^{\sigma})$ a McKay quiver variety.\\
\end{theorem1}

In the course of proving this Theorem, it became clear that a more intrinsic setting to work with McKay quiver varieties is needed, especially when $\Gamma$ is of type $D$ and $E$.  The development of this technical setting is inspired by the work of George Lusztig \cite[Section 2]{Lusz92}, of Michela Varagnolo and Eric Vasserot \cite[Section 2]{VV99}, as well as the one of Weiqiang Wang \cite[Theorem 5.1]{W00}.

 To be more precise, if one denotes by $\mathbf{Rep}$ the category of representations of the double framed McKay quiver and by $\mathbf{McK}$ a category defined with $\Gamma$-modules, then for every orientation $\Omega$ of the McKay quiver, two functors $F$ and $G$ between these categories are constructed in section \ref{chap_quiver}. We prove that these two functors define an equivalence of categories. This has already been done by Lusztig when the quiver is not framed \cite[Section 2.2]{Lusz92}. Furthermore, in the same section, we show that McKay quiver varieties denoted by $\QVTdef(d,d^f)$ and the analog of quiver varieties, denoted by $\mathcal{M}_{\theta}^{\lb}(M,M^f)$, coming naturally out of the category $\mathbf{McK}$ are isomorphic as algebraic varieties. Varagnolo and Vasserot work with the framed McKay quiver and with McKay quiver varieties but have not stated and proved this equivalence.

In the fourth section, we obtain a combinatorial description of the indexing set of these irreducible components in terms of elements of the root system of the Kac-Moody Lie algebra associated with $\Gamma$. Thanks to McKay's correspondence, one can associate to $\Gamma$ an affine Lie algebra. Fix a base of simple roots of this affine Lie algebra, associated with the fundamental Cartan chamber. Denote by $\mathcal{Q}$ the root lattice of this affine Lie algebra. The size of an element  $\alpha \in \mathcal{Q}$ is the average of the coefficients of $\alpha$ in a chosen set of simple roots weighted by the dimensions of the irreducible representations of $\Gamma$ (cf. Definition \ref{deff_size}). Generalizing \cite[Lemma $2.6$]{BM21} to type $D$ and $E$, another statistic on $\mathcal{Q}$ can be defined called the weight (cf. Definition \ref{deff_weight}). We prove the following theorem in section \ref{chap_irr}.\\
\begin{theorem2}
For each integer $n$, the indexing set of the irreducible components of $\Hk_n^{\Gamma}$ and of the irreducible components of the $\Gamma$-fixed point locus of the Calogero-Moser $n$-space are equal to the set of all positive linear combinations of simple roots of size $n$ and nonnegative weight.\\
\end{theorem2}
\noindent Finally in section \ref{sect_res}, we are interested in the classification of all projective, symplectic resolutions of the wreath product singularities \cite[Corollary $1.3$]{BC18}. If $k \geq 1$ is an integer, denote by $\Gamma_k$ the following wreath product $\Sk_k \ltimes \Gamma^k$. The wreath product singularity associated with $\Gamma$ and $k$ is then $\mathcal{Y}_k:=\Ctwo^k/\Gamma_k$. We show the following theorem.\\

\begin{theorem3}
For each projective, symplectic resolution $X \to \mathcal{Y}_k$, there exists $n \in \Z_{\geq 1}$ and there exists an irreducible component $\mathcal{I}$ of $\Hk_{n}^{\Gamma}$ such that $X \iso \mathcal{I}$ over $\mathcal{Y}_k$.
\end{theorem3}

\subsection{Notation for Kac-Moody Lie algebras}
\markboth{Notation}{Notation}
\noindent Here is a good place to recall common notation on Kac-Moody Lie algebras.
Take a finite or affine Kac-Moody Lie algebra $\mathfrak{g}$ and a Cartan subalgebra $\mathfrak{h}$ associated with the generalized Cartan matrix $A$. Let $\Pi$ be the set of simple roots associated with the fundamental Cartan chamber and $\Pi^{\vee}$ the set of simple coroots. Note that a realization (see \cite[paragraph 1.1]{kac90}) of $\transp{A}$ is given by the data $\left(\mathfrak{h}^*,\Pi^{\vee},\Pi\right)$. Denote by $\Dyn$ the Dynkin diagram associated with $\mathfrak{g}$, by $\Phi^+ \subset \Phi\subset \mathfrak{h}^*$ the positive roots and root system associated with the data $(\mathfrak{g},\mathfrak{h})$ and by $\Phi^{\vee} \subset \mathfrak{h}$ the coroot system. Denote by $\mathcal{Q}$ and $\mathcal{Q}^{\vee}$ respectively the root and coroot lattice. Let $W$ be the Weyl group associated with $(\mathfrak{h}, \Pi, \Pi^{\vee})$. Let the natural pairing between $\mathfrak{h}^*$ and $\mathfrak{h}$ be denoted by $\langle,\rangle$.

If $\mathfrak{g}$ is of affine type, let $\alpha_0$ and $\alpha^{\vee}_0$ be respectively the root and coroot defined in \cite[paragraph 6.4]{kac90}. Moreover let  $\mathbf{d}$ be the element of $\mathfrak{h}$ defined in \cite[paragraph 6.2]{kac90}. In that case, note that $\Pi^{\vee} \cup \{\mathbf{d}\}$ is a basis of $\mathfrak{h}$. In the affine case, one root and one coroot will have a special role. Let us denote by $\delta:=\sum_{\alpha \in \Pi}{\delta_{\alpha}\alpha} \in \Phi^+$ the null root, which is such that $A\delta=0$ and such that the integral coefficients $(\delta_{\alpha})_{\alpha \in \Pi} \in \mathbb{Z}_{\geq 0}$ are the smallest possible. In the same way, one can define $\delta^{\vee}:=\sum_{\alpha^{\vee} \in \Pi^{\vee}}{\delta^{\vee}_{\alpha^{\vee}}\alpha^{\vee}} \in \Phi^{\vee}$ the null coroot, which is such that $\transp{A}\delta^{\vee}=0$ and the integral coefficients $(\delta^{\vee}_{\alpha^{\vee}})_{\alpha^{\vee} \in \Pi^{\vee}} \in \mathbb{Z}_{\geq 0}$ are the smallest possible.

Finally, the fundamental weights and coweights will be needed. For each $\alpha \in \Pi$, consider $\Lambda_{\alpha} \in \mathfrak{h}^*$ such that:
\begin{equation*}
\forall \alpha^{\vee} \in \Pi^{\vee},\langle \Lambda_{\alpha},\alpha^{\vee} \rangle =
\begin{cases}
1 & \text { if } \langle \alpha, \alpha^{\vee} \rangle = 2\\
0 & \text { else}
\end{cases}
\text{ and } \langle \Lambda_{\alpha}, \mathbf{d}\rangle = 0.
\end{equation*}
Similarly, one can define the fundamental coweights. For each $\alpha^{\vee} \in \Pi^{\vee}$, consider $\Lambda^{\vee}_{\alpha^{\vee}} \in \mathfrak{h}$ such that:
\begin{equation*}
\forall \alpha \in \Pi,\langle \alpha, \Lambda^{\vee}_{\alpha^{\vee}}\rangle =
\begin{cases}
1 & \text { if } \langle \alpha, \alpha^{\vee} \rangle = 2\\
0 & \text { else}
\end{cases} \text{ and } \langle \Lambda_{\alpha_0}, \Lambda^{\vee}_{\alpha^{\vee}} \rangle = 0.
\end{equation*}
Let us denote by $\mathcal{P}$ and $\mathcal{P}^{\vee}$ respectively the weight and coweight lattice respectively generated by the fundamental weights and fundamental coweights.\\

\begin{deff}
\label{deff_size}
If $\alpha \in \mathcal{Q}$, then the size of $\alpha$ is defined as
\begin{equation*}
|\alpha|:=\langle \alpha,\sum_{\alpha^{\vee} \in \Pi^{\vee}}{\delta^{\vee}_{\alpha^{\vee}}\Lambda^{\vee}_{\alpha^{\vee}}}\rangle
\end{equation*}
 and if $\alpha^{\vee} \in \mathcal{Q}^{\vee}$, define:
\begin{equation*}
|\alpha^{\vee}|:=\langle \sum_{\alpha \in \Pi}{\delta_{\alpha}\Lambda_{\alpha}}, \alpha^{\vee} \rangle.
\end{equation*}
\end{deff}

\noindent In the affine case, the group $W$ is the affine Weyl group of the associated finite type. Let us denote respectively by $W_{\mathrm{fin}}$, $\mathcal{Q}_{\mathrm{fin}}$ and $\Phi_{\mathrm{fin}}$ the Weyl group, root lattice and set of roots of the associated finite type. One has the following isomorphism (\cite[Chap. VI, paragraph 2, Prop. $1$]{Bourblie})
\begin{equation*}
 W_{\mathrm{fin}} \ltimes \mathcal{Q}_{\mathrm{fin}} \iso W
\end{equation*}
For $a\in \mathcal{Q}_{\mathrm{fin}}$, we will denote by $t_a \in W$ the corresponding element.\\
All schemes will be over $\mathbb{C}$ and we will also suppose that the structure morphism is separated and of finite type. An algebraic variety will be an integral scheme.

\section{Quiver variety over McKay quivers}
\label{chap_quiver}

This section is here to set up the quiver theoretical background needed in section \ref{chap_decomp}. The subsection \ref{McKay quivers} is inspired by the work of George Lusztig \cite[Section 2]{Lusz92}, of Michela Varagnolo and Eric Vasserot \cite[Section 2]{VV99}, as well as the one of Weiqiang Wang \cite[Theorem 5.1]{W00}.
This section is decomposed into four parts. Firstly, we recall the construction of the representation space of a double framed quiver and of its natural symplectic structure. Then, if one starts with the double framed McKay quiver of $\Gamma$, one can construct the representation space of this quiver in terms of $\Gamma$-modules. This is the new setting to work with McKay quiver varieties. This setting will be used to prove the main theorem in section \ref{chap_decomp}. In the third part, we show that over the double framed McKay quiver the two representation spaces are isomorphic and that moreover, the symplectic forms coincide as well as the momentum maps. Moreover, we will, for McKay quivers, define an analog of Nakajima's quiver varieties that does not involve the choice of an orientation of the underlying quiver and thus will provide a more intrinsic point of view. We will show that this variety is isomorphic to Nakajima's original quiver variety. Finally, we will give a couple of results that are necessary for the following sections.

\subsection{Representations of double framed quivers}
\label{section_symp}
Let us begin this section with general notation for quivers.
Take an undirected multigraph $G:=(I_G,E_G)$ where $I_G$ is the set of vertices and $E_G$ the multiset of undirected edges and choose an orientation $\Omega:E_G \rightarrow I_G \times I_{G}$ for $G$. When the undirected multigraph and the orientation are clear from context, we shorten the notation and forget the dependencies in $G$ and $\Omega$, to improve readability.

The framed undirected multigraph associated with $G$ denoted by $G^f$ is the multigraph with one extra vertex for each vertex in $I_G$ so that $I_{G^f}=I_G\coprod I_G$ is the set of vertices and the multiset of edges is defined as $E_{G^f}:= E_G \coprod \big\{\{i,j_i\}| i \in I_G \big\}$ where $j_i$ denotes the new vertex associated with  $i \in I_G$.  Let us denote by $Q_G(\Omega)$ the quiver associated with the undirected multigraph $G$ and the orientation $\Omega$.  The source and target maps of $Q$ are given by $\Omega$ and will be respectively denoted by $h'$ and $h''$ for $h \in E$. Let us impose the convention that the orientation for the framing arrows is the  "outgoing" orientation. This implies that choosing an orientation of a framed quiver $\Omega^f:E_{G^f} \to I_{G^f} \times I_{G^f}$ is the same data as choosing an orientation $\Omega$ of the associated unframed undirected multigraph.
Consider the following orientation\\
\begin{center}
$\Omega^{\mathrm{op}}: \begin{array}{ccc}
E_G & \to & I_G \times I_G\\
h& \mapsto & (h'',h')
\end{array}$.\\
\end{center} \vspace*{0.25cm}
The quiver $Q_G(\Omega^{\op})$ is called the opposite quiver of $Q$ i.e. the quiver which has the same underlying undirected multigraph $G$ but reversed orientation. The multiset of arrows of $Q_G(\Omega^{\op})$ will be denoted by $E^{-}$. Finally denote by  $\overline{Q}$ the double quiver associated with $G$ which has the same set of vertices as $G$ and the multiset of arrows ${\overline{E}:=E \coprod E^{-}}$ so that $\overline{Q}$ does not depend on the choice of $\Omega$. Let $\bar{h}$ be the involution of $\overline{E}$ sending an edge $h \in \overline{E}$ to its reversed edge.

Let $\Delta_G$ be the free abelian group associated with the set $I$. Let  ${\Delta_G^+ \subset \Delta_G}$ be the free monoid associated with $I$. The set $\Delta_G^+$ will be referred to as the set of dimension parameters. A dimension parameter $d$ will often be defined by giving nonnegative integers $(d_{\nu})_{\nu \in I}$ which defines $d:=\sum_{\nu \in I}{d_{\nu} \nu} \in \Delta^+$. Let us moreover consider the set of stability parameters denoted by $\Theta:=\Hom_{\mathbb{Z}}(\Delta,\mathbb{Q})$ and the set of deformation parameters denoted by $\Lambda:=\Hom_{\mathbb{Z}}(\Delta,\C)$. If $\theta \in \mathbb{Q}$, let us denote by $\theta$ the stability parameter defined as $\theta(\nu):=\theta$ for all $\nu \in I$. If $\lb \in \C$, the same notation will be used for the constant  deformation parameter equal to $\lb$. We will also use the following notation:
\begin{align*}
\centering
\bullet \text{ }\Theta^+:=\{ \theta \in \Theta &\mid \forall \nu \in I, \theta(\nu) \geq 0\},\\
\bullet \text{ }\Theta^{++}:=\{ \theta \in \Theta &\mid \forall \nu \in I, \theta(\nu) >0\},\\
\bullet \text{ }\Lambda^{+}:=\{ \lambda \in \Lambda &\mid \forall \nu \in I, \lb(\nu) \geq0\},\\
\bullet \text{ }\Lambda^{++}:=\{ \lambda \in \Lambda &\mid \forall \nu \in I, \lb(\nu) >0\}.
\end{align*}
Let us denote $\mathbf{Rep}$ the category of representations of the quiver $\DMQGf$ in which objects are tuples $(V,V^f,(x_{h})_{h\in \overline{E_{G}}}, (v^1_{i},v^2_i)_{i \in I_G})$ where
\begin{itemize}
\item $V=\oplus_{i \in I_{G}}{V_{i}}$ is an $I_G$-graded complex vector space.
\item $V^f=\oplus_{i \in I_{G}}{V^f_{i}}$ is an $I_G$-graded complex vector space.
\item $\forall h \in \overline{E_G}, x_{h}:V_{h'} \to V_{h''}$ is a linear map.
\item $\forall i \in I_G, v_{i}^1:V^f_{i}\to V_{i}$  is a linear map.
\item $\forall i \in I_G, v_{i}^2: V_{i} \to V^f_{i}$ is a linear map.
\end{itemize}
Note that $v_i^1$ and $v_i^2$ correspond to the representation of the framing. A morphism between $(V,V^f,x,v^1,v^2)$ and $(\tilde{V},\tilde{V}^f,\tilde{x},\tilde{v}^1,\tilde{v}^2)$ is given by the following data. For all $i \in I$, we need to provide a morphism $\phi_{i}: V_{i}\to \tilde{V}_{i}$ and a morphism $\psi_{i}: V^f_{i} \to \tilde{V}^f_i$ such that for all $h \in \overline{E}$ and $i \in I$ the following diagrams commute
\begin{center}
\begin{tikzcd}[column sep=huge]
V_{h'} \ar[r, "\phi_{h'}"] \ar[d, "x_h"'] & \tilde{V}_{h'}\ar[d, "\tilde{x}_{h}" ]\\
V_{h''} \ar[r,"\phi_{h''}"']  & \tilde{V}_{h''}
\end{tikzcd}
\end{center}
\begin{center}
\begin{tikzcd}[column sep=huge]
V_{i} \ar[r, "v^2_{i}"] \ar[d, "\phi_{i}"'] & V^f_{i} \ar[r, "v^1_{i}"] \ar[d, "\psi_{i}" ] & V_{i} \ar[d, "\phi_{i}"]\\
\tilde{V}_{i} \ar[r,"\tilde{v}^2_{i}"']  & \tilde{V}^f_{i} \ar[r, "\tilde{v}^1_{i}"'] & \tilde{V}_{i}
\end{tikzcd}.
\end{center}

\begin{deff}
Given two $I$-graded complex vector spaces $V,V^f$ define the representation space of $\DMQGf$ with fixed $I$-graded complex vector spaces $V,V^f$ as
\begin{equation*}
\RepQ^{G}_{V, V^f}:= \bigoplus_{h \in \overline{E}}{\Hom(V_{h'},V_{h''}}) \oplus \bigoplus_{i \in I}{\Hom(V^f_{i}, V_{i})} \oplus \bigoplus_{i \in I}{\Hom(V_{i}, V^f_{i})}
\end{equation*}
\end{deff}

\begin{rmq}
We have the following remark concerning this notation.
\begin{enumerate}[label=(\roman*)]
\item If $V$ and $V^f$ are clear from context, this representation space will just  be denoted by $\mathcal{R}$.
\item Take a vertex $i_0 \in I$ and define $V^{f,i_0}$ such that $V^{f,i_0}_i$ is equal to $\C$ in degree $i_0$ and $0$ anywhere else. In the following, let us shorten $\RepQ_{V,V^{f,i_0}}^{G}$ to just $\RepQ_{V,i_0}^{G}$.  Note that in that case $v^1$ is just a linear map from $\C$ to $V_{i_0}$ i.e. can be identified with an element of $V_{i_0}$. We will refer to $\RepQ_{V,i_0}^G$ as the representation space of the double, framed at $i_0$, quiver $Q_G(\Omega)$.
\end{enumerate}
 \end{rmq}
\vspace*{0.25cm}

\noindent The construction of a symplectic structure on $\mathcal{R}$ will depend on the choice of an orientation $\Omega$. Recall that we have chosen an orientation $\Omega$ of $G$. The sign function associated with $\Omega$ will be denoted by $\varepsilon: \overline{E} \to \{-1,1\}$ . This map is such that ${\varepsilon(h):=1}$ for all $h \in E^+$ and $\varepsilon(h):=-1$ for all $h \in E^{-}$. Define the symplectic form
\begin{equation*}
\omega_{\varepsilon}:
\begin{array}{ccc}
\mathcal{R} \times \mathcal{R} & \to & \C \\
((x,v^1,v^2),(\tilde{x},\tilde{v}^1,\tilde{v}^2)) & \mapsto & \sum_{h \in \overline{E}}{\varepsilon(h)\mathrm{Tr}(x_h \tilde{x}_{\bar{h}})} + \sum_{i \in I}{\mathrm{Tr}(v^1_{i}\tilde{v}^2_{i} -\tilde{v}^1_{i}v^2_{i})}  \\
\end{array}
\end{equation*}
This structure comes from the natural isomorphism with the cotangent bundle of the representation space of the framed quiver.
Consider the group ${\G(V):=\prod_{i \in I_{G}}{\GL(V_{i})}}$ and its action on $\mathcal{R}$ given by
\begin{equation*}
g.(x,v^1,v^2):=\left(g.x,(g_{i}v^1_{i})_{i \in I},(v^2_{i} g_{i}^{-1})_{i \in I}\right)
\end{equation*}
for $g \in \G(V), (x,v^1,v^2) \in \mathcal{R}$ and where $\forall h \in \overline{E}, (g.x)_h:=g_{h''}x_hg_{h'}^{-1}$.\\

\begin{rmq}
Note that the main difference between the framed and unframed setting is that here we let $\G(V)$ act and not $\G(V)\times \G(V^f)$.
Moreover, with this action, the symplectic form $\omega_\varepsilon$ is $\G(V)$-invariant.
\end{rmq}

\noindent For $\theta \in \Theta_G$ such that $\mathrm{Im}(\theta) \subset \mathbb{Z}$, define the character of $\G(V)$ associated with $\theta$ as
\begin{center}
$\chi_{\theta}: \begin{array}{ccccc}
\G(V) &\to & \C^*\\
(g_i)_{i \in I} & \mapsto & \prod_{i \in I}{\mathrm{det}(g_i)^{\theta(i)}}\\
\end{array}$.
\end{center}
\vspace*{0.25cm}

\noindent Consider now the momentum map attached to that action \cite[Definition 9.43]{Kir06}
\begin{equation*}
\mu_\varepsilon:
\begin{array}{ccc}
\mathcal{R} & \to & \bigoplus_{i \in I}{\mathrm{End}(V_{i})}\\
(x,v^1,v^2) & \mapsto & \sum_{h \in \overline{E}, h''=i}{\varepsilon(h)x_hx_{\bar{h}}} + \sum_{j \in I}{v^1_{j}v^2_{j}}  \\
\end{array}.
\end{equation*}
For $\lb \in \Lambda$, let us still denote by $\lb:=\sum_{i \in I}{\lb(i)\id_{V_i}} \in \bigoplus_{i \in I}{\mathrm{End}(V_{i})}$.

\subsection{Representation space of double framed McKay quivers}
\label{McKay quivers}
Take $k \in \mathbb{Z}_{\geq 0}$ and recall that we have fixed $\Gamma$ a finite subgroup of $\mathrm{SL}_2(\C)$. By a $\Gamma$-module, we mean a $\C[\Gamma]$-module of finite dimension.
This subsection is devoted to exposing the representation space of the double framed McKay quiver of $\Gamma$ in terms of $\Gamma$-modules.

Denote by $\RepGk:=\mathrm{Hom}_{\mathrm{gp}}(\Gamma, \GL_k(\C))$ and by $\CharGk$ the set of all characters of elements in $\RepGk$. Consider $\RepG := \bigcup_{k \geq 0}{\RepGk}$ and  $\CharG$ the set of all characters of representations in $\RepG$. For $\chi \in \CharG$, let us denote $k_{\chi}:=\chi(\id)$ and choose $\rho_{\chi} \in \mathrm{Rep}_{k_{\chi}}$ such that the character of $\rho_{\chi}$ denoted by $\mathrm{Tr}(\rho_{\chi})$ is $\chi$. Note that $\rho_{\chi}$ is determined, up to conjugation by an element of $\mathrm{GL}_{k_{\chi}}(\C)$. Denote the representation space of $\rho_{\chi}$ by $X_{\chi}$ which is endowed with the $\Gamma$-action given by $\rho_{\chi}$. Let $\IG$ be the set of all characters of irreducible representations of $\Gamma$. It is finite since $\Gamma$ is finite.

Denote by $\chi_0 \in \mathrm{Irr}$ the trivial character. The group $\Gamma$ being a subgroup of $\SL_2(\C)$, it has a natural representation of dimension $2$ called the standard representation and denoted by  $\rho_{\mathrm{std}}$. In the following, the character of the standard representation, which is irreducible whenever $\Gamma$ is not a cyclic group, and its associated representation space will be respectively denoted by $\chi_{\mathrm{std}}$ and $X_{\mathrm{std}}$.
If $A$ and $B$ are two $\Gamma$-modules, then $A \to_{\Gamma} B$ denotes a $\Gamma$-equivariant morphism and $\Hom_{\Gamma}(A,B)$ will denote the set of all $\Gamma$-equivariant morphisms between $A$ and $B$.\\

\noindent Consider $\mathbf{McK}$ the category in which objects are tuples of the form  $(M,M^f,\Delta,Z)$ where
\begin{itemize}
\item $M$ is a $\Gamma$-module
\item $M^f$ is a $\Gamma$-module
\item $\Delta: \Xstd \otimes M \to_{\Gamma} M$
\item  $Z_1:M^f \to_{\Gamma} M$
\item $Z_2:M \to_{\Gamma} M^f$
\end{itemize}
Morphisms of this category, between $(M,M^f,\Delta,Z_1,Z_2)$ and $(\tilde{M}, \tilde{M}^f, \tilde{\Delta}, \tilde{Z}_1, \tilde{Z}_2)$, are pairs $(\Phi, \Psi)$ where $\Phi: M \to_{\Gamma} \tilde{M}$  and $\Psi: M^f \to_{\Gamma} \tilde{M}^f$ are such that the following diagrams commute
\begin{center}
\begin{tikzcd}[column sep=huge]
\Xstd \otimes M \ar[r, "\Delta"] \ar[d, "\mathrm{id} \otimes \Phi"'] & M\ar[d, "\Phi" ]\\
\Xstd \otimes \tilde{M} \ar[r,"\tilde{\Delta}"']  & \tilde{M}
\end{tikzcd},
\end{center}

\begin{center}
\begin{tikzcd}[column sep=huge]
M \ar[r, "Z_2"] \ar[d, "\Phi"'] & M^f \ar[d, "\Psi" ] \ar[r, "Z_1"] & M \ar[d, "\Phi"]\\
\tilde{M} \ar[r,"\tilde{Z}_2"']  & \tilde{M}^f \ar[r, "\tilde{Z}_1"'] & \tilde{M}
\end{tikzcd}.
\end{center}
The universal property of the tensor product gives
\begin{equation*}
\Hom(\Xstd \otimes M, M) \simeq \Hom(\Xstd,\mathrm{End}(M)).
\end{equation*}
Moreover, if we let $\Gamma$ act on $\mathrm{End}(M)$ by conjugacy, then we have
\begin{equation*}
\Hom_{\Gamma}(\Xstd \otimes M, M) \simeq \Hom_{\Gamma}(\Xstd,\mathrm{End}(M)).
\end{equation*}

\noindent For  $\Delta \in \Hom_{\Gamma}(X_{\mathrm{std}} \otimes M, M)$ and $x \in \Xstd$, let us denote $\Delta_x$ the obtained endomorphism of $M$. Concretely, for all $m \in M$, $\Delta_x(m)$ is equal to $\Delta(x\otimes m)$.\\

\begin{deff}
Take two $\Gamma$-modules $M,M^f$ and define the following complex vector space attached to $M,M^f$
\begin{equation*}
\RepQ_{M,M^f} := \Hom_{\Gamma}(\Xstd \otimes M ,M) \oplus \Hom_{\Gamma}(M^f,M) \oplus \Hom_{\Gamma}(M,M^f).
\end{equation*}
\end{deff}

\begin{rmq}
When $M$ and $M^f$ are clear from context, we will just denote this representation space by $\RepQ$.\\
\end{rmq}

\begin{deff}
Let $\Aut_{\Gamma}(M)$ be the algebraic group of linear automorphisms of $M$ commuting with the $\Gamma$-action.
\end{deff}

\noindent We can now consider an action of $\Aut_{\Gamma}(M)$ on $\RepQ$ given by
\begin{equation*}
g.(\Delta,Z_1,Z_2):=(g.\Delta,gZ_1,Z_2g^{-1})
\end{equation*}
for $g \in \Aut_{\Gamma}(M)$, $(\Delta,Z_1,Z_2) \in \RepQ$ and where
\begin{equation*}
g.\Delta(x\otimes m):=g\Delta(x\otimes g^{-1}m), \quad \forall (x,m) \in \Xstd\times M.
\end{equation*}

\begin{rmq}
\label{rmq_inv}
Let us denote by $(e_1, e_2)$ the canonical basis of $\Xstd$.
\begin{enumerate}[label=(\roman*)]

\item One can check that $\Delta_{e_1}\Delta_{e_2}-\Delta_{e_2}\Delta_{e_1} \in \mathrm{End}_{\Gamma}(M)$, using the following fact
\begin{equation*}
\Delta_{g.e_1}\Delta_{g.e_2}-\Delta_{g.e_2}\Delta_{g.e_1}=\det(g)(\Delta_{e_1}\Delta_{e_2}-\Delta_{e_2}\Delta_{e_1}), \qquad \forall g \in \GL_2(\C).
\end{equation*}

\item In the special case where $M^f=X_{\chi}$ for some $\chi \in \Irr$, let us denote $\RepQ_{M,M^f}$ by $\RepQ_{M,\chi}$ and $\RepQ_{M}:=\RepQ_{M,\chi_0}$.\\
\end{enumerate}
\end{rmq}

\noindent Define an $\Aut_{\Gamma}(M)$-invariant symplectic form on $\RepQ_{M,M^f}$
\begin{equation*}
\omega:
\begin{array}{ccc}
\RepQ \times \RepQ & \to & \C\\
\left(\left(\Delta,Z_1,Z_2\right),\left(\tilde{\Delta},\tilde{Z}_1,\tilde{Z}_2\right)\right)& \mapsto & \Tr\left(\Delta_{e_1}\tilde{\Delta}_{e_2}-\Delta_{e_2}\tilde{\Delta}_{e_1}\right) + \Tr\left(Z_1\tilde{Z}_2-\tilde{Z}_1Z_2\right).  \\
\end{array}
\end{equation*}

\begin{rmq}
Note that $\omega$ is independent of the choice of basis of $\Xstd$ as long as one picks a basis $(e,f)$ such that $\det(e,f)=1$.
\end{rmq}

\noindent The momentum map attached to $\omega$ and the action of $\Aut_{\Gamma}(M)$ on $\RepQ$ is
\begin{equation*}
\mu:
\begin{array}{ccc}
\RepQ & \to & \mathrm{End}_{\Gamma}(M)\\
(\Delta,Z_1,Z_2) & \mapsto & \Delta_{e_1}\Delta_{e_2}-\Delta_{e_2}\Delta_{e_1} + Z_1Z_2 \\
\end{array}
\end{equation*}

\begin{rmq}
Note that the symplectic form and the momentum map only depend on the finite group $\Gamma$.\\
\end{rmq}

\begin{deff}
Define the McKay undirected multigraph $G_{\Gamma}$ associated with $\Gamma$ in the following way. The set of vertices is $\IG$ the set of all irreducible characters of $\Gamma$. There is an edge between a pair of irreducible characters $(\chi,\chi')$ if and only if $\langle \chi\chi_{\mathrm{std}}|\chi'\rangle \neq 0$ with multiplicity $\langle \chi\chi_{\mathrm{std}}|\chi'\rangle$.\\
\end{deff}
\begin{rmq}
Note that $G_{\Gamma}$ is indeed undirected because $\Gamma$ is a subgroup of $\mathrm{SL}_2(\C)$. By choosing an orientation of the McKay multigraph of $\Gamma$, we define the McKay quiver denoted by $Q_{\Gamma}$. The choice of orientation will not be relevant to this subsection.\\
\end{rmq}
\noindent  Let us finish this subsection by presenting a way to formulate the McKay Correspondence. This will be useful in section \ref{chap_irr}.\\

\begin{prop}[McKay Correspondence, \cite{mckay80}]
\label{mckay}
If $V$ denotes the vector space $\Delta_{G_\Gamma}\otimes \C$, then $\Irr$ defines an affine root system in $V$ of type $\Gamma$.\\
\end{prop}

\begin{rmq}
\label{rmq_mckay}
To be a bit more precise and use the language developed by Kac in \cite[Chapter $1$]{kac90}. Let $\tilde{\Pi}^{\vee} := \Irr$, which is then a base of $V$. We can now construct a realization of the generalized Cartan matrix of type $\Gamma$ out of $V$ and $\tilde{\Pi}^{\vee}$. Consider the following linear map defined on the base $\Irr$:
\begin{center}
$\phi: \begin{array}{ccc}
V & \to & V^*\\
\chi & \mapsto & (\psi \mapsto 2\langle \chi, \psi \rangle - \langle \chi \chi_{\mathrm{std}}|\psi \rangle) \\
\end{array}$.\\
\end{center}

\noindent Note that the element $\phi(\chi) \in V^*$ is defined on the base $\Irr$ of $V$. By construction, we have that $\phi(\chi)(\psi)$ is exactly the corresponding coefficient of the generalized Cartan matrix of type $\Gamma$. Let us denote this coefficient by $a_{\chi,\psi}$. Take $\mathfrak{t}_0$ to be a one-dimensional complex vector space. Consider $\mathfrak{t} := \mathfrak{t}_0 \oplus V$. For each $\chi \in \Irr$, define $\tilde{\alpha}_{\chi} \in \mathfrak{t}^*$ as follows:
\begin{equation*}
\tilde{\alpha}_{\chi}(t_0+\sum_{\psi \in \Irr}{c_{\psi}\psi}) := t_0 + \sum_{\psi \in \Irr}{c_{\psi}a_{\psi,\chi}}, \quad \forall (t_0,(c_{\psi})_{\psi}) \in \mathfrak{t}_0 \times \C^{|\Irr|}.
\end{equation*}
The set $\tilde{\Pi}:=\{\tilde{\alpha}_{\chi} \mid \chi \in \Irr\}$ is linearly independant. The triple $(\mathfrak{t}, \tilde{\Pi}, \tilde{\Pi}^{\vee})$ is a realization of the generalized Cartan matrix of type $\Gamma$. Consider  ${(\mathfrak{h},\Pi,\Pi^{\vee}):=(\mathfrak{t}^*,\tilde{\Pi}^{{\vee}^{**}},\tilde{\Pi})}$ where
\begin{center}
$^{**}:\begin{array}{ccc}
\mathfrak{t} & \to & \mathfrak{h}^*=\mathfrak{t}^{**}\\
t & \mapsto & \left(\phi \mapsto \phi(t)\right)\\
\end{array}$.
\end{center}
For $\chi \in \Irr$, let us denote $\alpha_{\chi}:=\chi^{**} \in \Pi$.
The type $\Gamma$ being simply laced, $(\mathfrak{h}, \Pi, \Pi^{\vee})$ is also a realization of the generalized Cartan matrix of type $\Gamma$ and we will referred to it as the realization given by the McKay correspondence.
\end{rmq}

\subsection{Equivalence of categories between $\mathbf{Rep}$ and $\mathbf{McK}$}

In this section, we will establish an equivalence between the two previously defined categories. To be more precise, let us show that $\mathbf{Rep}(\DMQf)$ and $\mathbf{McK}$ are equivalent and that this equivalence is compatible with the momentum maps.
To do so, we have to make choices. Thanks to the fact that $\Gamma$ is a finite subgroup of $\SL_2(\C)$, we know that for all $h \in \overline{E}$, ${T_{h}:= \Hom_{\Gamma}(\Xstd \otimes X_{h'}, X_{h''})}$ is of dimension either $0,1$ or $2$ (it is only of dimension $2$ when $\Gamma=\mu_2$, the cyclic group with two elements).

For each edge ${h \in \overline{E}}$, choose a nonzero element $y_{h}^0 \in \Hom_{\Gamma}(\Xstd \otimes X_{h'}, X_{h''})$. The irreducibility of $X_{h''}$ implies that $y_h^0$ is surjective. Since representations of finite groups are semisimple, we can consider a $\Gamma$-equivariant section of $y_h^0$ denoted by $\tilde{y}_h^0 \in \Hom_{\Gamma}(X_{h''},\Xstd\otimes X_{h'})$. For $\mu_2$, we can construct everything explicitly. Take the following labeling of the double, framed at $\chi_0$ McKay quiver of $\mu_2$
\begin{figure}[H]
\centering
\includegraphics[scale=0.25]{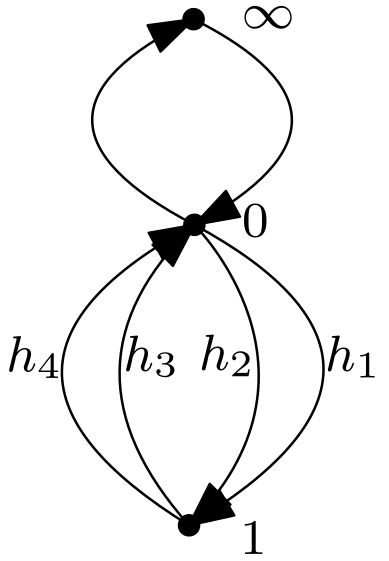}
\end{figure}

\noindent such that $\overline{h_1}=h_4$ and $\overline{h_3}=h_2$. Let $\chi_1$ denote the nontrivial irreducible character of $\mu_2$ and let $X_1:=X_{\chi_1}$. With this notation, $X_{\mathrm{std}}=X_1\oplus X_1$. Take these maps
\begin{multicols}{2}
\begin{itemize}
\item $y_{h_1}^0: \begin{array}{ccc}
(X_1 \oplus X_1) \otimes X_0 & \to & X_1\\
(a,b)\otimes 1 & \mapsto & a \\
\end{array}$

\item $y_{h_2}^0: \begin{array}{ccc}
(X_1 \oplus X_1) \otimes X_0 & \to & X_1\\
(a,b)\otimes 1 & \mapsto & b \\
\end{array}$

\item $y_{h_3}^0: \begin{array}{ccc}
(X_1 \oplus X_1) \otimes X_1 & \to & X_0\\
(a,b)\otimes c & \mapsto & ac \\
\end{array}$

\item $y_{h_4}^0: \begin{array}{ccc}
(X_1 \oplus X_1) \otimes X_1 & \to & X_0\\
(a,b)\otimes c & \mapsto & bc \\
\end{array}$

\item $\tilde{y}_{h_1}^0: \begin{array}{ccc}
X_1 & \to & (X_1\oplus X_1) \otimes X_0\\
1 & \mapsto & (1,0)\otimes 1\\
\end{array}$

\item $\tilde{y}_{h_2}^0: \begin{array}{ccc}
X_1 & \to & (X_1\oplus X_1) \otimes X_0\\
1 & \mapsto & (0,1)\otimes 1\\
\end{array}$

\item $\tilde{y}_{h_3}^0: \begin{array}{ccc}
X_0 & \to & (X_1\oplus X_1) \otimes X_1\\
1 & \mapsto & (1,0)\otimes 1\\
\end{array}$

\item $\tilde{y}_{h_4}^0: \begin{array}{ccc}
X_0 & \to & (X_1\oplus X_1) \otimes X_1\\
1 & \mapsto & (0,1)\otimes 1\\
\end{array}$.
\end{itemize}
\end{multicols}

\begin{lemme}
\label{sum works}
For each $\chi \in \Irr$, $\sum_{h \in \overline{E}, h'=\chi}{\tilde{y}_h^0\circ y_h^0}=\mathrm{id}_{\Xstd\otimes X_{\chi}}$.
\end{lemme}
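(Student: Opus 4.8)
The plan is to split the verification according to whether $\Gamma = \mu_2$ or not, because $\mu_2$ is exactly the case where some $T_h^{\Gamma}$ has dimension $2$ and the $\Gamma$-module $\Xstd \otimes X_{\chi}$ fails to be multiplicity-free; for every other $\Gamma$ the McKay graph is a simply-laced affine Dynkin diagram with only single edges, so each $T_h^{\Gamma}$ is at most one-dimensional and $\Xstd \otimes X_{\chi}$ is multiplicity-free.

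For $\Gamma = \mu_2$ I would simply compute with the explicit maps fixed above. Taking $\chi = \chi_0$, the arrows $h$ with $h' = \chi_0$ are $h_1$ and $h_2$, and one finds $\tilde{y}_{h_1}^0 \circ y_{h_1}^0 \colon (a,b)\otimes 1 \mapsto (a,0)\otimes 1$ and $\tilde{y}_{h_2}^0 \circ y_{h_2}^0 \colon (a,b)\otimes 1 \mapsto (0,b)\otimes 1$, whose sum is $\mathrm{id}_{\Xstd \otimes X_0}$. Taking $\chi = \chi_1$, the relevant arrows are $h_3$ and $h_4$, and the same computation (now using the identification $X_1 \otimes X_1 \cong X_0$) gives $\tilde{y}_{h_3}^0 \circ y_{h_3}^0 \colon (a,b)\otimes c \mapsto (a,0)\otimes c$ and $\tilde{y}_{h_4}^0 \circ y_{h_4}^0 \colon (a,b)\otimes c \mapsto (0,b)\otimes c$, again summing to the identity.

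For $\Gamma \neq \mu_2$ the decisive step is to identify each composite $p_h := \tilde{y}_h^0 \circ y_h^0$ with the canonical projection of $\Xstd \otimes X_{\chi}$ onto its $h''$-isotypic component. Since $y_h^0 \circ \tilde{y}_h^0 = \mathrm{id}_{X_{h''}}$, the map $p_h$ is idempotent, with $\mathrm{Im}\, p_h = \mathrm{Im}\, \tilde{y}_h^0$ and $\ker p_h = \ker y_h^0$. Schur's lemma together with multiplicity-freeness forces $\ker y_h^0$ to be the sum of all isotypic components of $\Xstd \otimes X_{\chi}$ other than the one of type $h''$, while $\mathrm{Im}\,\tilde{y}_h^0$, being a submodule isomorphic to $X_{h''}$, must be the unique copy of $X_{h''}$, i.e. the full $h''$-isotypic component; hence $p_h$ is exactly the isotypic projection $\pi_{h''}$, independently of the chosen $y_h^0$ and of its section $\tilde{y}_h^0$. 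Because the arrows $h$ with $h' = \chi$ are in bijection with the neighbours $\chi'$ of $\chi$ in the McKay graph (single edges), summing yields $\sum_{h' = \chi} p_h = \sum_{\chi' \sim \chi} \pi_{\chi'}$, which equals $\mathrm{id}_{\Xstd \otimes X_{\chi}}$ by the isotypic decomposition $\Xstd \otimes X_{\chi} = \bigoplus_{\chi' \sim \chi} (\Xstd \otimes X_{\chi})_{[\chi']}$.

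The only genuine obstacle here is conceptual rather than computational: the clean argument through isotypic projections works precisely because multiplicities are at most one, so that each $p_h$ is forced to be the canonical projection regardless of the arbitrary choices of $y_h^0$ and of its section. This mechanism breaks down for $\mu_2$, where $X_1$ occurs twice in $\Xstd \otimes X_0$ and the two irreducible summands are not canonical; there the identity holds only because the chosen sections are mutually compatible, which is exactly why that case has to be settled by the explicit hand computation.
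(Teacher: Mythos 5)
Your proof is correct, and it reaches the identity by a different mechanism than the paper's. The paper first shows that $\sum_{h \in \overline{E_{\Gamma}}, h'=\chi}{y_h^0}$ is an isomorphism from $\Xstd\otimes X_{\chi}$ onto $\bigoplus_{h \in \overline{E_{\Gamma}}, h'=\chi}{X_{h''}}$ (surjectivity plus the dimension count built into the McKay graph), which reduces the lemma to the identity $\left(\sum_{l, l'=\chi}{y_l^0}\right)\circ\left(\sum_{h, h'=\chi}{\tilde{y}_h^0\circ y_h^0}\right)=\sum_{h, h'=\chi}{y_h^0}$, and this in turn follows from the orthogonality relations $y_l^0\circ\tilde{y}_h^0=\delta_{l,h}\,\mathrm{id}_{X_{h''}}$ (Schur's lemma when $l\neq h$, the section property when $l=h$). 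You instead characterize each composite $\tilde{y}_h^0\circ y_h^0$ intrinsically as the canonical projector onto the $h''$-isotypic component --- idempotency together with the identification of its image and kernel, forced by Schur's lemma and multiplicity-freeness --- and then conclude by summing the isotypic projections. The two arguments rest on the same ingredients: Schur's lemma and the fact that for $\Gamma\neq\mu_2$ the McKay graph has only single edges, so $\Xstd\otimes X_{\chi}$ is multiplicity-free (in the paper this enters through the vanishing of the cross terms $y_l^0\circ\tilde{y}_h^0$ for $l\neq h$, which needs the targets $l''$ and $h''$ to be distinct irreducibles); and both dispose of $\mu_2$ by the explicit computation with the displayed maps, which you carry out and the paper leaves to the reader. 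What your packaging buys is the observation that each $\tilde{y}_h^0\circ y_h^0$ is independent of the choice of $y_h^0$ and of its section, which makes transparent both why the lemma is choice-independent for $\Gamma\neq\mu_2$ and why $\mu_2$ (where the two copies of $X_1$ inside $\Xstd\otimes X_0$ are not canonical) genuinely requires the compatible explicit choices; the paper's packaging is more compact and verifies the identity directly against the chosen maps.
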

\begin{proof}
For $\Gamma=\mu_2$ it is an easy computation. Now take $\Gamma\neq \mu_2$.
It is then  clear that $\sum_{h \in \overline{E}, h'=\chi}{y_h^0} \in \Hom_{\Gamma}(\Xstd \otimes X_{\chi}, \bigoplus_{h \in \overline{E},h'=\chi}{X_{h''}})$ is surjective. By construction of the McKay graph, we have
\begin{equation*}
\mathrm{dim}(\Xstd\otimes X_{\chi})=\sum_{h \in \overline{E}}{\mathrm{dim}(X_{h''})}.
\end{equation*}
In particular, this implies that $\sum_{h \in \overline{E}, h'=\chi}{y_h^0}$ is an isomorphism. It is then enough to prove that
\begin{equation*}
\left(\sum_{l \in \overline{E}, l'=\chi}{y_l^0} \right) \circ \left(\sum_{h \in \overline{E}, h'=\chi}{\tilde{y}_h^0\circ y_h^0} \right) = \sum_{h \in \overline{E}, h'=\chi}{y_h^0}.
\end{equation*}
For each pair of edge $(l,h) \in \overline{E}^2$ such that $l'=h'$, using Schur's Lemma, we have that
\begin{equation*}
y_l^0\circ \tilde{y}_h^0 =
\begin{cases}
\mathrm{id}_{X_{h''}} & \text{ if } l=h,\\
0 & \text{ otherwise}.
\end{cases}
\end{equation*}
\end{proof}

\noindent  Consider a special dimension parameter $\delta \in \Delta_{\Gamma}$ defined as $\delta_{\chi}:=\mathrm{dim}(X_{\chi})$ for all $\chi \in \Irr$. This notation does not conflict with the notation of the null root (see end of the subsection \ref{imp_res}). With the preceding choice of orientation and maps for $\mu_2$, everything that will follow will also work.\\
\noindent Define two functors $\mathcal{F}: \mathbf{Rep} \to  \mathbf{McK}$ and $\mathcal{G}: \RepGa  \to  \mathbf{Rep}$.\\
First, let
\begin{equation*}
 \mathcal{F}:
\begin{array}{ccc}
\mathbf{Rep} & \to & \mathbf{McK}\\
(V,V^f,x,v^1,v^2) & \rightsquigarrow &  (M, M^f, \Delta, Z_1, Z_2)\\
\end{array}
\end{equation*}
where
\begin{itemize}
\item $M:=\bigoplus_{\chi \in \Irr}{V_{\chi} \otimes X_{\chi}}$,
\item $M^f:=\bigoplus_{\chi \in \Irr}{V^f_{\chi} \otimes X_{\chi}}$,
\item $\Delta$ is defined as the following composition
\begin{center}
$\begin{tikzcd}
\Xstd \otimes \left( \bigoplus_{\chi \in \Irr}{V_{\chi} \otimes X_{\chi}} \right) \ar[d, "\rotatebox{90}{\(\sim\)}"'] \ar[r, "\Delta"] & \bigoplus_{\chi' \in \Irr}{V_{\chi'} \otimes X_{\chi'}}\\
\bigoplus_{\chi \in \Irr}{\Xstd \otimes V_{\chi} \otimes X_{\chi}} \ar[ur, "\sum_{h \in \overline{E}}{y_h^0 \otimes x_h}"']
\end{tikzcd}$,
\end{center}

\item $Z_1 := (\sum_{\chi \in \Irr}{v^1_{\chi}\otimes \frac{1}{\delta_{\chi}}\mathrm{id}_{X_{\chi}}}): \bigoplus_{\chi \in \Irr}{ V^f_{\chi} \otimes X_{\chi}} \to \bigoplus_{\chi \in \Irr}{V_{\chi} \otimes X_{\chi}}$,
\item $Z_2 := (\sum_{\chi \in \Irr}{v^2_{\chi}\otimes \mathrm{id}_{X_{\chi}}}): \bigoplus_{\chi \in \Irr}{ V_{\chi} \otimes X_{\chi}} \to \bigoplus_{\chi \in \Irr}{V^f_{\chi} \otimes X_{\chi}}$.\\
\end{itemize}

\noindent Moreover $\mathcal{F}$ sends a morphism $(\phi_{\chi},\psi_{\chi})$ to $(\Phi,\Psi)$ where
\begin{itemize}
\item $\Phi := \bigoplus_{\chi \in \Irr}{\phi_\chi\otimes \mathrm{id}_{X_{\chi}}}$
\item $\Psi := \bigoplus_{\chi \in \Irr}{\psi_\chi\otimes \mathrm{id}_{X_{\chi}}}$.\\
\end{itemize}
Now, let
\begin{equation*}
\mathcal{G}:
\begin{array}{ccc}
\RepGa & \to & \Repm\\
(M,M^f,\Delta,Z_1,Z_2)& \rightsquigarrow & (V, V^f, x, v^1, v^2) \\
\end{array}
\end{equation*}
where
\begin{itemize}
\item $V=\bigoplus_{\chi \in \Irr}{\Hom_{\Gamma}(X_{\chi},M)}$,
\item $V^f=\bigoplus_{\chi \in \Irr}{\Hom_{\Gamma}(X_{\chi},M^f)}$,
\item For all $h \in \overline{E}$, define $x_h$ such that the following diagram commutes
\begin{center}
$\begin{tikzcd}
\Hom_{\Gamma}(X_{h'},M) \ar[d, "-\otimes \mathrm{id}_{\Xstd}"'] \ar[r, "x_h"]& \Hom_{\Gamma}(X_{h''},M)\\
\Hom_{\Gamma}(\Xstd \otimes X_{h'}, \Xstd \otimes M) \ar[r,"{- \circ \tilde{y}_h^0}"']& \Hom_{\Gamma}(X_{h''},\Xstd \otimes M) \ar[u, "\Delta\circ -"']
\end{tikzcd}$,
\end{center}
\item $\forall \chi \in \Irr$
\begin{center}
$v^1_{\chi}:\begin{array}{ccc}
\Hom_{\Gamma}(X_{\chi},M^f) & \to & \Hom_{\Gamma}( X_{\chi},M)\\
f & \mapsto & \delta_{\chi}Z_1\circ f  \\
\end{array}$,
\end{center}
\item $\forall \chi \in \Irr$
\begin{center}
$v^2_{\chi}: \begin{array}{ccc}
\Hom_{\Gamma}(X_{\chi},M) & \to & \Hom_{\Gamma}\left(X_{\chi},M^f \right)\\
f & \mapsto & Z_2 \circ f  \\
\end{array}$.
\end{center}
\end{itemize}

\noindent Moreover $\mathcal{G}$ sends a morphism $(\Phi,\Psi)$ to $(\phi_{\chi},\psi_{\chi})_{\chi \in \Irr}$ where
\begin{itemize}
\item  $\phi_{\chi} := \Phi\circ -$
\item  $\psi_{\chi} := \Psi\circ -$.\\
\end{itemize}

\begin{prop}
\label{equiv cat}
The functors $\mathcal{F}$ and $\mathcal{G}$ define an equivalence of categories.\\
\end{prop}

\noindent The proof of Proposition \ref{equiv cat} is provided in Section \ref{appendix}.\\

\begin{rmq}
If $M,M^f$ are two $\Gamma$-modules, then it is clear that the functor $\mathcal{G}$ induces a morphism ${\RepQ_{M,M^f} \to \RepQ_{\mathcal{G}(M),\mathcal{G}(M^f)}}$. Moreover, if $V$ and $V^f$ are two $\Irr$-graded complex vector spaces, it is clear that $\mathcal{F}$ induces a morphism $\RepQ_{V,V^f} \to \RepQ_{\mathcal{F}(V),\mathcal{F}(V^f)}$.

Note that $\mathcal{F}$ only depends on $y^0$ and that $\mathcal{G}$ only depends on $\tilde{y}^0$ . When this dependence is not clear from context, we will respectively denote $\mathcal{F}$ and $\mathcal{G}$ by $\mathcal{F}_{y^0}$ and $\mathcal{G}_{y^0}$. The following proposition show that these choices account for the choice of orientation of the quiver underlying $\overline{Q_{\Gamma^f}}$.\\
\end{rmq}

\noindent We can now wonder what happens to the symplectic structures.\\

\begin{prop}
\label{link_symp}
For every orientation $\Omega$ of $G_{\Gamma}$ , there exists a family $(\lphoe_{h})_{h \in \overline{E}} \in {(\C^*)}^{|\overline{E}|}$ such that for all $(x,v^1,v^2),(\tilde{x},\tilde{v}^1,\tilde{v}^2) \in \RepQ_{V,V^f}$:
\begin{equation*}
\omega_\varepsilon\left((x,v^1,v^2),(\tilde{x},\tilde{v}^1,\tilde{v}^2)\right)=\\\omega\left(\mathcal{F}_{y}(x,v^1,v^2),\mathcal{F}_{y}(\tilde{x},\tilde{v}^1,\tilde{v}^2)\right)
\end{equation*}
and for all  $(\Delta,Z_1,Z_2),(\tilde{\Delta},\tilde{Z}_1,\tilde{Z}_2) \in \RepQ_{M,M^f}$:
\begin{equation*}
\omega_\varepsilon\left(\mathcal{G}_{\tilde{y}}(\Delta,Z_1,Z_2),\mathcal{G}_{\tilde{y}}(\tilde{\Delta},\tilde{Z}_1,\tilde{Z}_2)\right)=\\\omega\left((\Delta,Z_1,Z_2),(\tilde{\Delta},\tilde{Z}_1,\tilde{Z}_2)\right)
\end{equation*}
where $y_{h}:=\lphoe_{h}y^0_{h},\quad \tilde{y}_{h}:={\lphoe_{h}}^{-1}\tilde{y}^0_{h}$, for all $h \in \overline{E}$.
\end{prop}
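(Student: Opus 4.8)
The plan is to compute the pullback of the intrinsic form $\omega_{\Gamma}$ along the functor $F_{y^0}$ at the level of representation spaces, splitting both forms into an \emph{arrow part} and a \emph{framing part}, and then to absorb the remaining discrepancy into the scalars $\lphoe_h^{\Omega}$. First I would dispose of the framing part, which is independent of the orientation and of the scalars. Writing $Z_1^v=\sum_{\chi}v^1_{\chi}\otimes\frac{1}{\delta^{\Gamma}_{\chi}}\id_{X_{\chi}}$ and $Z_2^v=\sum_{\chi}v^2_{\chi}\otimes\id_{X_{\chi}}$, the composite $Z_1^v\tilde{Z}_2^v$ is block diagonal on $M_V=\bigoplus_{\chi}V_{\chi}\otimes X_{\chi}$, equal to $\sum_{\chi}v^1_{\chi}\tilde{v}^2_{\chi}\otimes\frac{1}{\delta^{\Gamma}_{\chi}}\id_{X_{\chi}}$. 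Taking the trace and using $\dim(X_{\chi})=\delta^{\Gamma}_{\chi}$, the factor $\frac{1}{\delta^{\Gamma}_{\chi}}$ exactly cancels the dimension, so $\Tr(Z_1^v\tilde{Z}_2^v)=\sum_{\chi}\Tr(v^1_{\chi}\tilde{v}^2_{\chi})$; hence the framing part of $\omega_{\Gamma}(F_{y^0}(\cdot),F_{y^0}(\cdot))$ is $\sum_{\chi}\Tr(v^1_{\chi}\tilde{v}^2_{\chi}-\tilde{v}^1_{\chi}v^2_{\chi})$, which is the framing part of $\omega^{\epsilon_{\Omega}}_{\Gamma}$.

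Next comes the arrow part, which is the crux. For $a\in\Xstd$ one has $\Delta^x_a=\sum_{h\in\overline{E_{\Gamma}}}x_h\otimes(y^0_h)_a$ with $(y^0_h)_a\colon X_{h'}\to X_{h''}$ the $a$-component of $y^0_h$. Setting $A_h:=(y^0_h)_{e_1}$ and $B_h:=(y^0_h)_{e_2}$ and expanding, the block-diagonal parts combine so that $\Tr(\Delta^x_{e_1}\tilde{\Delta}^x_{e_2}-\Delta^x_{e_2}\tilde{\Delta}^x_{e_1})=\sum_{h,l}\Tr(x_h\tilde{x}_l)\big(\Tr(A_hB_l)-\Tr(B_hA_l)\big)$, the sum running over pairs with $l'=h''$ and $l''=h'$ (the only ones giving a traceable endomorphism). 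For $\Gamma\neq\mu_2$ the McKay graph is an affine Dynkin diagram without multiple edges, so the only such $l$ is $\bar{h}$ and the sum collapses to $\sum_{h}c_h\,\Tr(x_h\tilde{x}_{\bar h})$ with $c_h:=\Tr(A_hB_{\bar h}-B_hA_{\bar h})$. Cyclicity of the trace gives at once $c_{\bar h}=-c_h$, matching $\epsilon_{\Omega}(\bar h)=-\epsilon_{\Omega}(h)$.

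To see $c_h\neq 0$ I would use non-degeneracy rather than a direct Schur computation: by Proposition~\ref{equiv cat} and the remark following it, $F_{y^0}$ is a linear isomorphism on representation spaces, and $\omega_{\Gamma}$ is symplectic, so the pulled-back form is non-degenerate; since it has no cross terms between the arrow variables and the framing variables, its arrow part $\sum_h c_h\Tr(x_h\tilde{x}_{\bar h})$ must itself be non-degenerate (choosing $V$ with all components nonzero). If some $c_{h_0}=0$ then also $c_{\bar h_0}=0$, and the coordinate $x_{h_0}$ would lie in the radical, a contradiction; hence every $c_h\neq 0$. (Equivalently, the antisymmetrization $A_hB_{\bar h}-B_hA_{\bar h}$ is $\Gamma$-equivariant, because $\Lambda^2\Xstd$ is the trivial representation as $\Gamma\subset\SL_2(\C)$, so by Schur it is a scalar on the irreducible $X_{h''}$.) With the scaling $y^{\Omega}_h=\lphoe^{\Omega}_h y^0_h$ the $h$-summand is multiplied by $\lphoe^{\Omega}_h\lphoe^{\Omega}_{\bar h}$, so matching the arrow parts amounts to $\lphoe^{\Omega}_h\lphoe^{\Omega}_{\bar h}\,c_h=\epsilon_{\Omega}(h)$ for all $h$; by the two antisymmetries the conditions for $h$ and $\bar{h}$ coincide, so it suffices to fix the product $\lphoe^{\Omega}_h\lphoe^{\Omega}_{\bar h}=\epsilon_{\Omega}(h)/c_h\in\C^*$ on each pair $\{h,\bar h\}$, which is possible precisely because $c_h\neq 0$. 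This yields the first identity.

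For $\Gamma=\mu_2$ the graph $\tilde{A}_1$ has a double edge, so cross terms $l\neq\bar h$ a priori survive; there I would verify directly from the explicit maps $y^0_{h_i},\tilde{y}^0_{h_i}$ listed above that the cross terms vanish and that each $c_h$ is the expected nonzero scalar, after which the same choice of $\lphoe^{\Omega}_h$ works. Finally, the second identity follows formally from the first: since $\tilde{y}^{\Omega}_h=(\lphoe^{\Omega}_h)^{-1}\tilde{y}^0_h$ is still a section of $y^{\Omega}_h$ and Lemma~\ref{sum works} is unaffected by the scaling, Proposition~\ref{equiv cat} holds for the twisted pair $(F_{y^{\Omega}},G_{\tilde{y}^{\Omega}})$, so $F_{y^{\Omega}}\circ G_{\tilde{y}^{\Omega}}$ is conjugation by the canonical isomorphism $\epsilon_M$; applying the first identity to $G_{\tilde{y}^{\Omega}}(\Delta,Z_1,Z_2)$ and using that $\omega_{\Gamma}$ is invariant under this canonical isomorphism (being defined by conjugation-invariant traces) gives the claim. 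I expect the main obstacle to be the arrow-part computation, namely showing that no cross terms survive and that each $c_h$ is nonzero; the non-degeneracy argument is the clean route around the latter point.
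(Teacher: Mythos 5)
Your proposal is correct and follows essentially the same route as the paper's own proof: the same expansion of $\omega_{\Gamma}\bigl(F_{y^{\Omega}}(\cdot),F_{y^{\Omega}}(\cdot)\bigr)$ into arrow and framing parts, the same appeal to nondegeneracy of $\omega_{\Gamma}$ together with Proposition \ref{equiv cat} to get each $c_h\neq 0$, the same trace-cyclicity antisymmetry matching $\epsilon_{\Omega}(\bar h)=-\epsilon_{\Omega}(h)$, the same choice of the products $\lphoe_h^{\Omega}\lphoe_{\bar h}^{\Omega}$, and the same deduction of the second identity from the first via $F_{y^{\Omega}}\circ G_{\tilde y^{\Omega}}\cong \mathrm{id}$ and conjugation-invariance of the trace. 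The only (welcome) difference is that you explicitly verify the vanishing of cross terms for the double edge of $\mu_2$, a point the paper handles implicitly by its earlier remark that the explicit $\mu_2$ construction makes everything that follows work.
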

\begin{proof}
Expliciting the right-hand side of the first equality gives
\begin{align*}
&\sum_{h \in \overline{E}}{\lphoe_{h} \lphoe_{\bar{h}} \Tr(x_{\bar{h}} \otimes [y_{\bar{h}}^0]_1 \circ \tilde{x}_{h} \otimes [y_{h}^0]_2 - x_{\bar{h}} \otimes [y_{\bar{h}}^0]_2 \circ \tilde{x}_{h} \otimes [y_{h}^0]_1)} + \Tr(\sum_{\chi \in \Irr}{v^1_{\chi}\tilde{v}^2_{\chi}-\tilde{v}^1_{\chi}v^2_{\chi}})\\
&= \sum_{h \in \overline{E}}{\lphoe_{h}\lphoe_{\bar{h}}\Tr([y_{\bar{h}}^0]_1[y_{h}^0]_2-[y_{\bar{h}}^0]_2[y_{h}^0]_1)\Tr(x_{\bar{h}}\tilde{x}_{h})} + \Tr(\sum_{\chi \in \Irr}{v^1_{\chi}\tilde{v}^2_{\chi}-\tilde{v}^1_{\chi}v^2_{\chi}})
\end{align*}

\noindent where we denote $[y_{h}^0]_k \in \Hom(X_{h'},X_{h''})$ the morphism $[y_{h}^0]_{e_k}$ which comes from the canonical identification of $T_h$ with $\Hom_{\Gamma}(\Xstd,\Hom(X_{h'},X_{h''}))$ and taking the morphism corresponding to $e_k \in \Xstd$.

Take $h \in \overline{E}$. The term $\Tr([y_{\bar{h}}^0]_1[y_{h}^0]_2-[y_{\bar{h}}^0]_2[y_{h}^0]_1)$ is nonzero thanks to the nondegenerescence of $\omega$ and Proposition \ref{equiv cat}. Moreover
\begin{equation*}
\Tr\left([y_{\bar{h}}^0]_1[y_{h}^0]_2-[y_{\bar{h}}^0]_2[y_{h}^0]_1\right)+ \Tr\left([y_{h}^0]_1[y_{\bar{h}}^0]_2-[y_{h}^0]_2[y_{\bar{h}}^0]_1\right)=0.
\end{equation*}
thanks to the well-known fact that $\Tr(AB)=\Tr(BA)$.

Now, we can choose $\lphoe_h$ and $\lphoe_{\bar{h}}$ in such a way that ${\lphoe_{h}\lphoe_{\bar{h}}\Tr([y_{\bar{h}}^0]_1[y_{h}^0]_2-[y_{\bar{h}}^0]_2[y_{h}^0]_1)=\varepsilon(\bar{h})}$. Note that the previous equation gives that the pair $(\lphoe_h,\lphoe_{\bar{h}})$ is unique up to a non-zero scalar.
To prove the second statement, we use Proposition \ref{equiv cat} and its proof to obtain that
\begin{center}
$\epsilon_M\circ \mathcal{F}(\mathcal{G}(\Delta))\circ \left(\mathrm{id}_{\Xstd}\otimes \epsilon_M^{-1}\right)=\Delta$,\\
$\epsilon_M\circ  \mathcal{F}(\mathcal{G}(Z_1)) \circ \epsilon_{M^f}^{-1}=Z_1$,\\
$\epsilon_{M^f}\circ \mathcal{F}(\mathcal{G}(Z_2)) \circ \epsilon_M^{-1}= Z_2$.
\end{center}
Use what has been proven to get
\begin{equation*}
\omega_\varepsilon\left(\mathcal{G}_{\tilde{y}}(\Delta,Z_1,Z_2),\mathcal{G}_{\tilde{y}}(\tilde{\Delta},\tilde{Z}_1,\tilde{Z}_2)\right)=\\\omega\left(\mathcal{F}_{y}\left(\mathcal{G}_{\tilde{y}}\left(\Delta,Z_1,Z_2\right)\right),\mathcal{F}_{y}\left(\mathcal{G}_{\tilde{y}}\left(\tilde{\Delta},\tilde{Z}_1,\tilde{Z}_2\right)\right)\right).
\end{equation*}
Using the invariance of the trace by conjugacy we get the desired equality.
\end{proof}

\noindent Fix an orientation $\Omega$ of the McKay multigraph $G_{\Gamma}$. We now have two families $(y_h)$, $(\tilde{y}_h)$ which will be used from now on.
The functors $\mathcal{F}$ and $\mathcal{G}$ intertwine the actions of $\Aut_{\Gamma}(M)$ on $\RepQ_{M,M^f}$ and of $\G(V)$ on $\RepQ_{V,V^f}$. Consider the morphism:\\
\begin{center}
${\tilde{p}_V:\begin{array}{ccc}
\bigoplus_{\chi \in \Irr}{\End(V_{\chi})} & \to & \End_{\Gamma}(\bigoplus_{\chi \in \Irr}{V_{\chi}\otimes X_{\chi}})\\
 (f_{\chi})_{\chi \in \Irr}& \mapsto & \left(\sum_{\chi \in \Irr}{v_{\chi}\otimes x_{\chi}} \mapsto \sum_{\chi \in \Irr}{f_{\chi}(v_{\chi})\otimes x_{\chi}}\right)\\
\end{array}}$
\end{center}
\vspace*{0.25cm}
 where $V$ is an $\Irr$-graded complex vector space.

Using Schur's Lemma it is clear that $\tilde{p}_V$ is an isomorphism and by construction it restricts to $p_V: G(V) \iso \Aut_{\Gamma}(\mathcal{F}(V))$ which is an isomorphism of algebraic groups.\\
If $M$ is a $\Gamma$-module, consider ${\tilde{p}_M: \End_{\Gamma}(M)  \iso  \bigoplus_{\chi \in \Irr}{\End(\Hom_{\Gamma}(X_{\chi},M))}}$ the conjugation by the isomorphism $\epsilon^{-1}_M$ (the isotypical decomposition of $M$). The morphism $\tilde{p}_M$ restricts to an isomorphism of algebraic groups $p_M:\Aut_{\Gamma}(M) \iso \G(\mathcal{G}(M))$.\\

\begin{prop}
\label{link_action}
Let $V$ and $V^f$ be two $\Irr$-graded complex vector spaces. If one lets the algebraic group $\G(V)$ act on $\RepQ_{\mathcal{F}(V),\mathcal{F}(V^f)}$ through $p_V$, then the map $\RepQ_{V,V^f} \to \RepQ_{\mathcal{F}(V),\mathcal{F}(V^f)}$ is $\mathrm{G}(V)$-equivariant.\\
Let $M$ and $M^f$ be two $\Gamma$-modules. If one lets the algebraic group $\Aut_{\Gamma}(M)$ act on $\RepQ_{\mathcal{G}(M), \mathcal{G}(M^f)}$ through $p_M$, then the map $\RepQ_{M,M^f} \to \RepQ_{\mathcal{G}(M), \mathcal{G}(M^f)}$ is $\Aut_{\Gamma}(M)$-equivariant.
\end{prop}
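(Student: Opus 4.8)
The plan is to deduce the statement from the functoriality of $F_{y^{\Omega}}$ and $G_{\tilde{y}^{\Omega}}$, once one observes that in each of the two categories the group action is transport of structure along a canonical isomorphism fixing the framing. First I would record the key characterization on the quiver side: for $g \in \G(V)$, the pair $(\phi,\psi)$ with $\phi_{\chi} := g_{\chi}$ and $\psi_{\chi} := \mathrm{id}_{V^f_{\chi}}$ is a morphism in $\mathbf{Rep}(\DMQf)$ from $(V,V^f,x,v^1,v^2)$ to $(V,V^f,g.x,g.v^1,g.v^2)$, and is an isomorphism since each $g_{\chi}$ is invertible. This is checked directly against the three commuting squares defining a morphism: the arrow square forces the target arrows to be $g_{h''}x_h g_{h'}^{-1} = (g.x)_h$, while the two framing squares force the target framing maps to be $g_i v^1_i$ and $v^2_i g_i^{-1}$; this is exactly the definition of $g.(x,v^1,v^2)$. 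In particular, because $(\phi,\psi)$ is an isomorphism with fixed source, $g.(x,v^1,v^2)$ is the unique object turning $(\phi,\psi)$ into a morphism out of $(x,v^1,v^2)$, and the analogous statement holds on the $\mathbf{McK}(\Gamma)$-side for $\gamma \in \Aut_{\Gamma}(M)$ and $(\gamma,\mathrm{id}_{M^f}) \colon (\Delta,Z_1,Z_2) \to (\gamma.\Delta, \gamma Z_1, Z_2 \gamma^{-1})$, the first defining diagram amounting to the identity $\gamma.\Delta(x \otimes \gamma m) = \gamma\Delta(x \otimes m)$.

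Next I would apply the functor. By the definition of $F_{y^{\Omega}}$ on morphisms, $F_{y^{\Omega}}(\phi,\psi) = (\bigoplus_{\chi} g_{\chi} \otimes \mathrm{id}_{X_{\chi}}, \mathrm{id})$, and $\bigoplus_{\chi} g_{\chi} \otimes \mathrm{id}_{X_{\chi}}$ is by definition $\tilde{p}_V(g)$, hence $p_V(g)$; so $F_{y^{\Omega}}(\phi,\psi) = (p_V(g), \mathrm{id})$. Since $F_{y^{\Omega}}$ is a functor, this is an isomorphism out of $F_{y^{\Omega}}(V,V^f,x,v^1,v^2)$ whose target is $F_{y^{\Omega}}(g.(x,v^1,v^2))$. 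On the other hand, by the characterization recalled above applied in $\mathbf{McK}(\Gamma)$, the pair $(p_V(g),\mathrm{id})$ is the canonical isomorphism out of $F_{y^{\Omega}}(x,v^1,v^2)$ with target $p_V(g).F_{y^{\Omega}}(x,v^1,v^2)$. By uniqueness of the target of an isomorphism with fixed source, the two targets coincide, giving $F_{y^{\Omega}}(g.(x,v^1,v^2)) = p_V(g).F_{y^{\Omega}}(x,v^1,v^2)$, which is precisely the asserted $\G(V)$-equivariance.

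The second statement is entirely symmetric. Here I would first check that $G_{\tilde{y}^{\Omega}}$ sends the canonical isomorphism $(\gamma,\mathrm{id}_{M^f})$ of $\gamma \in \Aut_{\Gamma}(M)$ to $((\gamma \circ -)_{\chi}, \mathrm{id})$, and that post-composition $(\gamma \circ -)_{\chi}$ on $\Hom_{\Gamma}(X_{\chi},M)$ coincides with $p_M(\gamma)$: indeed conjugating $\gamma$ by the isotypical isomorphism $\epsilon_M$ sends $f \otimes z \mapsto \epsilon_M^{-1}(\gamma(f(z))) = (\gamma \circ f)\otimes z$, so $\tilde{p}_M(\gamma)$ acts on each multiplicity space by $f \mapsto \gamma \circ f$. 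Running the same uniqueness argument, now inside $\mathbf{Rep}(\DMQf)$, then yields $G_{\tilde{y}^{\Omega}}(\gamma.(\Delta,Z_1,Z_2)) = p_M(\gamma).G_{\tilde{y}^{\Omega}}(\Delta,Z_1,Z_2)$.

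The only genuinely delicate point — the main obstacle — is assembling this dictionary: verifying the defining commutative squares so as to identify each group action with conjugation by an isomorphism in its category, and confirming that $F_{y^{\Omega}}$ and $G_{\tilde{y}^{\Omega}}$ carry these canonical isomorphisms to $p_V(g)$ and $p_M(\gamma)$. Once the dictionary is in place, functoriality does all the work and, notably, no computation involving the explicit maps $y^{\Omega}_h$, $\tilde{y}^{\Omega}_h$ is required.
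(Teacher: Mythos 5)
Your proof is correct, but it is organized differently from the paper's. The paper disposes of this proposition in one line, asserting that the equivariance ``follows directly from the definitions of the actions \ldots and from the definition of the functors'': implicitly, one writes out $F_{y^{\Omega}}(g.(x,v^1,v^2))$ and $p_V(g).F_{y^{\Omega}}(x,v^1,v^2)$ componentwise and observes they agree (and likewise for $G_{\tilde{y}^{\Omega}}$). You instead prove it formally: you identify each group action as transport of structure, i.e.\ $g.(x,v^1,v^2)$ is the unique object making $(g,\mathrm{id})$ a morphism in $\mathbf{Rep}(\DMQf)$ out of $(x,v^1,v^2)$, and similarly $(\gamma,\mathrm{id}_{M^f})$ in $\RepGa$; you then check that $F_{y^{\Omega}}$ (resp.\ $G_{\tilde{y}^{\Omega}}$) carries these canonical isomorphisms to $(p_V(g),\mathrm{id})$ (resp.\ $(p_M(\gamma),\mathrm{id})$), and conclude by functoriality together with uniqueness of the target of a morphism with invertible components. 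Your verifications of the dictionary are accurate: the three commuting squares do force the target to be exactly $g.(x,v^1,v^2)$, the formula for $F_{y^{\Omega}}$ on morphisms gives precisely $\tilde{p}_V(g)=\bigoplus_{\chi}g_{\chi}\otimes\mathrm{id}_{X_{\chi}}$, and conjugation by $\epsilon_M$ does send $\gamma$ to post-composition $f\mapsto\gamma\circ f$ on each multiplicity space. What your route buys is that no computation with the chosen equivariant maps $y^{\Omega}_h$, $\tilde{y}^{\Omega}_h$ is ever needed, since they enter only the definition of the functors on objects, which functoriality lets you treat as a black box; the cost is the extra categorical scaffolding, which for a statement this close to the definitions the paper evidently judged unnecessary. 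Both arguments rest on the same underlying content, so this is a difference of packaging rather than of substance, but your version makes the formal nature of the proposition transparent.
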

\begin{proof}
The result follows directly from the definitions of the actions on $\RepQ_{V,V^f}$ and $\RepQ_{M,M^f}$ and from the definition of the functors $\mathcal{F}$ and $\mathcal{G}$.
\end{proof}

\noindent We can also link the two momentum maps. The proof of the following proposition is provided in Section \ref{appendix}.\\
\begin{prop}
\label{link_momentum}
For any two $\Irr$-graded complex vector spaces $V$ and $V^f$, the following diagram commutes
\begin{center}
$\begin{tikzcd}[column  sep=4cm]
\RepQ_{V,V^f} \ar[r, "\mathcal{F}"] \ar[d, "\mu_{\varepsilon}"'] & \RepQ_{\mathcal{F}(V),\mathcal{F}(V^f)} \ar[d, "\mu"]\\
\bigoplus_{\chi \in \Irr}{\End(V_{\chi})} \ar[r,"\sim", "\sum_{\chi \in \Irr}{- \otimes \frac{1}{\delta_{\chi}}\mathrm{id}_{X_{\chi}}}"'] & \End_{\Gamma}(\mathcal{F}(V))
\end{tikzcd}$.
\end{center}
For any two $\Gamma$-modules $M$ and $M^f$, the following diagram commutes
\begin{center}
$\begin{tikzcd}[column sep=5cm]
\RepQ_{M,M^f} \ar[r, "\mathcal{G}"] \ar[d, "\mu"'] & \RepQ_{\mathcal{G}(M),\mathcal{G}(M^f)} \ar[d, "\mu_{\varepsilon}"]\\
\End_{\Gamma}(M)  & \bigoplus_{\chi \in \Irr}{\End\left(\mathcal{G}(M)_{\chi}\right)} \ar[l,"\sim"', "\epsilon_M \circ \left(\sum_{\chi \in \Irr}{- \otimes \frac{1}{\delta_{\chi}}\mathrm{id}_{X_{\chi}}}\right) \circ \epsilon_M^{-1}"] \end{tikzcd}$.
\end{center}
\end{prop}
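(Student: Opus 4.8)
The plan is to prove the first diagram by an explicit computation of $\mu_{\Gamma}\circ F_{y^{\Omega}}$, and then to deduce the second from it using the equivalence of Proposition \ref{equiv cat} together with the conjugation‑equivariance of $\mu_{\Gamma}$. For the first diagram, fix $(x,v^1,v^2)\in\RepQ^{\Gamma}_{V,V^f}$ and write $F_{y^{\Omega}}(x,v^1,v^2)=(M_V,M_{V^f},\Delta^x,Z_1^v,Z_2^v)$. The value $\mu_{\Gamma}$ splits into a framing part $Z_1^v Z_2^v$ and a commutator part $\Delta^x_{e_1}\Delta^x_{e_2}-\Delta^x_{e_2}\Delta^x_{e_1}$, which I would treat separately. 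The framing part is immediate: since $Z_1^v=\sum_{\chi}v^1_{\chi}\otimes\frac{1}{\delta^{\Gamma}_{\chi}}\id_{X_{\chi}}$ and $Z_2^v=\sum_{\chi}v^2_{\chi}\otimes\id_{X_{\chi}}$ are block‑diagonal in the isotypic decomposition, one gets $Z_1^v Z_2^v=\sum_{\chi}v^1_{\chi}v^2_{\chi}\otimes\frac{1}{\delta^{\Gamma}_{\chi}}\id_{X_{\chi}}$, which is exactly the image of the framing part $\sum_{\chi}v^1_{\chi}v^2_{\chi}$ of $\mu^{\epsilon_{\Omega}}_{\Gamma}(x,v^1,v^2)$ under the bottom isomorphism.

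For the commutator part I would write $\Delta^x_{e_k}$ on the isotypic decomposition as $\sum_{h\in\overline{E_{\Gamma}}}x_h\otimes[y^{\Omega}_h]_k$, with the notation $[y^{\Omega}_h]_k$ of Proposition \ref{link_symp}, so that the $\chi$‑to‑$\chi$ block of the commutator is
\[
\sum_{\eta}\sum_{\substack{l:\chi\to\eta\\ h:\eta\to\chi}}(x_h x_l)\otimes\bigl([y^{\Omega}_h]_1[y^{\Omega}_l]_2-[y^{\Omega}_h]_2[y^{\Omega}_l]_1\bigr).
\]
By Remark \ref{rmq_inv} the commutator lies in $\End_{\Gamma}(M_V)$, so by Schur's Lemma it preserves each isotypic component $V_{\chi}\otimes X_{\chi}$ and acts there as $A_{\chi}\otimes\id_{X_{\chi}}$. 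The key device is to recover $A_{\chi}$ by the partial trace over $X_{\chi}$: applying $\id_{V_{\chi}}\otimes\Tr_{X_{\chi}}$ to the displayed block gives
\[
\delta^{\Gamma}_{\chi}A_{\chi}=\sum_{\eta}\sum_{\substack{l:\chi\to\eta\\ h:\eta\to\chi}}\Tr\bigl([y^{\Omega}_h]_1[y^{\Omega}_l]_2-[y^{\Omega}_h]_2[y^{\Omega}_l]_1\bigr)\,x_h x_l,
\]
which reduces the whole computation to the scalar traces already appearing in Proposition \ref{link_symp}.

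I would then evaluate these traces. When $l=\bar{h}$, cyclicity of the trace together with the normalization of the family $(\lphoe^{\Omega}_h)$ fixed in Proposition \ref{link_symp} yields $\Tr([y^{\Omega}_h]_1[y^{\Omega}_{\bar h}]_2-[y^{\Omega}_h]_2[y^{\Omega}_{\bar h}]_1)=\epsilon_{\Omega}(h)$. Terms with $l\neq\bar{h}$ can occur only when two vertices of $G_{\Gamma}$ are joined by more than one edge, i.e. only for $\Gamma=\mu_2$, since otherwise $G_{\Gamma}$ is an affine Dynkin diagram with simple edges and no loops; for $\mu_2$ the explicit maps $y^0_{h_i}$ fixed before Lemma \ref{sum works} make all such cross terms vanish by direct inspection. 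Hence $A_{\chi}=\frac{1}{\delta^{\Gamma}_{\chi}}\sum_{h:h''=\chi}\epsilon_{\Omega}(h)\,x_h x_{\bar h}$, the commutator equals $\sum_{\chi}A_{\chi}\otimes\id_{X_{\chi}}$, and the factor $\frac{1}{\delta^{\Gamma}_{\chi}}$ produced by the partial trace cancels the $\frac{1}{\delta^{\Gamma}_{\chi}}$ in the bottom isomorphism. Together with the framing computation this shows the first diagram commutes.

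Finally I would deduce the second diagram. Given $(\Delta,Z_1,Z_2)\in\RepQ^{\Gamma}_{M,M^f}$, set $V:=G_{\tilde{y}^{\Omega}}(M)$ and apply the first diagram to $G_{\tilde{y}^{\Omega}}(\Delta,Z_1,Z_2)$. By Proposition \ref{equiv cat}, $F_{y^{\Omega}}G_{\tilde{y}^{\Omega}}(\Delta,Z_1,Z_2)$ is carried to $(\Delta,Z_1,Z_2)$ by conjugation with the isotypic isomorphism $\epsilon_M$, and since $\mu_{\Gamma}$ is equivariant under such conjugation one has $\mu_{\Gamma}(F_{y^{\Omega}}G_{\tilde{y}^{\Omega}}(\Delta,Z_1,Z_2))=\epsilon_M^{-1}\mu_{\Gamma}(\Delta,Z_1,Z_2)\epsilon_M$. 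Substituting this into the first diagram and conjugating back by $\epsilon_M$ reproduces exactly the second diagram, whose bottom arrow is precisely $\epsilon_M\circ(\sum_{\chi}-\otimes\frac{1}{\delta^{\Gamma}_{\chi}}\id_{X_{\chi}})\circ\epsilon_M^{-1}$. The main obstacle is the commutator step: recognizing the commutator as block‑diagonal and extracting the multiplicity‑space endomorphism via the partial‑trace trick, after which everything funnels into the trace identity of Proposition \ref{link_symp}; the vanishing of the $\mu_2$ cross terms and the bookkeeping of the normalization factor are the delicate points.
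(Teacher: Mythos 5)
Your proposal is correct, and its skeleton is the same as the paper's: the framing part of $\mu_{\Gamma}\circ F_{y^{\Omega}}$ is computed identically, and the second square is deduced from the first exactly as in the paper, using Proposition \ref{equiv cat} and the identity $\mu_{\Gamma}\left(F_{y^{\Omega}}\left(G_{\tilde{y}^{\Omega}}(\Delta,Z_1,Z_2)\right)\right)=\epsilon_M^{-1}\mu_{\Gamma}(\Delta,Z_1,Z_2)\epsilon_M$. The genuine difference is in the commutator term. The paper expands it retaining only the compositions of each arrow $h$ with its reverse $\bar{h}$ and applies Schur's Lemma edge by edge: nondegeneracy considerations give the operator identity $[y^0_{\bar{h}}]_1[y^0_h]_2-[y^0_{\bar{h}}]_2[y^0_h]_1=\lphoe^0_h\,\mathrm{id}_{X_{h'}}$, which combined with the normalization of Proposition \ref{link_symp} yields $\epsilon_{\Omega}(h)=\lphoe^{\Omega}_h\lphoe^{\Omega}_{\bar{h}}\lphoe^0_{\bar{h}}\delta^{\Gamma}_{h''}$; the terms it drops (blocks between distinct isotypic components, and, for $\mu_2$, the cross terms between parallel edges) are discarded silently, covered only by the earlier blanket remark that the explicit $\mu_2$ choices make everything work. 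You instead apply Schur's Lemma globally, to the whole commutator via Remark \ref{rmq_inv}, writing it as $\sum_{\chi}A_{\chi}\otimes\mathrm{id}_{X_{\chi}}$, and extract $A_{\chi}$ by a partial trace over $X_{\chi}$; this funnels everything into the scalar traces normalized in Proposition \ref{link_symp} and forces the omitted terms into the open, where you correctly check that the $\mu_2$ cross terms vanish for the explicit $y^0_{h_i}$ and that no such terms exist otherwise, the remaining McKay graphs being simply laced without loops. Both arguments consume the same input, namely the choice of the constants $\lphoe^{\Omega}_h$, and both produce the factor $\frac{1}{\delta^{\Gamma}_{\chi}}$ matching the bottom isomorphism (yours from $\Tr(\mathrm{id}_{X_{\chi}})=\delta^{\Gamma}_{\chi}$ in the partial trace, the paper's from the same trace hidden in the relation between $\lphoe^0_h$ and $\epsilon_{\Omega}(h)$); what your version buys is that you never need the operator identity itself, only traces, and the cancellation of every term the paper's expansion passes over is made explicit rather than implicit.
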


\begin{rmq}
One might wonder why the map $\bigoplus_{\chi \in \Irr}{\End(V_{\chi})} \to \End_{\Gamma}(\mathcal{F}(V))$ is not $\tilde{p}_V$. This comes from the fact that the identifications between $\bigoplus_{\chi \in \Irr}{\End(V_{\chi})^*}$ and $\bigoplus_{\chi \in \Irr}{\End(V_{\chi})}$ and between $\End_{\Gamma}(\mathcal{F}(V))^*$ and $\End_{\Gamma}(\mathcal{F}(V))$ is not commuting with this map. Indeed, we need to define an isomorphism ${\bigoplus_{\chi \in \Irr}{\End(V_{\chi})} \to \End_{\Gamma}(\mathcal{F}(V))}$ that makes the following diagram commute
\begin{center}
\begin{tikzcd}
\End_{\Gamma}(\mathcal{F}(V)) \ar[r, "\mathrm{Tr}"] & \End_{\Gamma}(\mathcal{F}(V))^* \ar[d]\\
\bigoplus_{\chi \in \Irr}{\End(V_{\chi})} \ar[u] \ar[r, "\sum_{\chi \in \Irr}{\mathrm{Tr}}"']& \bigoplus_{\chi \in \Irr}{\End(V_{\chi})^*}
\end{tikzcd}.
\end{center}
\end{rmq}

\subsection{Nakajima's quiver varieties}
In this section, we will recall the general construction of Nakajima's quiver variety and use the setting of $\Gamma$-modules to introduce an analog. First, we introduce notation. Take $G$ an undirected multigraph and $d \in \Delta_G^+$. Denote for all $\nu \in I_G$, $V^{d}_{\nu}:= \C^{d_\nu}$. Let ${\G(d):=\G(V^d)}$ and for an $I_G$-graded complex vector space $V$ denote by $\dim(V)$ the dimension parameter $\sum_{\nu \in I_G}{\dim(V_{\nu})\nu} \in \Delta^+_G$.

If $H$ is a reductive algebraic group over $\C$, $\chi: H \to \C^*$ is a rational character and $X$ is a complex affine algebraic $H$-variety, then we will denote by $X^{\chi-ss}$ the set of all $\chi$-semistable points of $X$. We will also denote $X\sslash_{\chi} H$ the GIT quotient twisted by $\chi$.

For all $\theta \in \Theta$, there exists $N \in \mathbb{Z}_{\geq 1}$, such that $\mathrm{Im}(N\theta) \subset \mathbb{Z}$. If $\G(d)$ acts on an affine complex algebraic variety $X$, we will denote by $X^{\theta-ss}$ the $\chi_{N\theta}$-semistable points of $X$ and by $X \sslash _{\theta} \G(d)$ the GIT quotient $X \sslash_{\chi_{N\theta}} \G(d)$. Note that \cite[Corollary $9.15$]{Kir06} assures that the notion of $\theta$-semistability, does not depend on the choice of $N$.\\

\begin{deff}
 Define \textbf{Nakajima's quiver variety} of $\overline{Q_{G^f}}$ attached to an orientation $\Omega$ of $G$, to a dimension parameter $(d,d^f) \in \Delta^+$, to a stability parameter $\theta \in \Theta$ and to a deformation parameter $\lb \in \Lambda$ as follows
\begin{equation*}
\mathcal{M}^{\lb}_{\theta}\left(d, d^f\right) := {\mu_\varepsilon}^{-1}(\lb)\sslash_{\boldsymbol{\theta}} \G(d) \simeq {\mu_{\varepsilon}}^{-1}(\lb)^{\theta-ss}\sslash \G(d).
\end{equation*}

In the special case where $d^{f,i_0}:=\dim(V^{f,i_0})$ for some vertex $i_0 \in \Irr$, we will denote $\mathcal{M}^G_{\boldsymbol{\theta},\lb}\left(d,d^{f,i_0}\right)$ by $\mathcal{M}^G_{\boldsymbol{\theta},\lb}(d,i_0)$.\\
\end{deff}

\noindent Let us define an analog of Nakajima's quiver variety. If $M$ is a $\Gamma$-module and $\theta \in \Theta$, consider $\chi_{\theta}\circ p_M$ the rational character of $\Aut_{\Gamma}(M)$. If $X$ is an $\Aut_{\Gamma}(M)$-variety, we will also shorten $X^{\chi_{\theta}\circ p_M-ss}$ to $X^{\theta-ss}$.\\

\begin{rmq}
\label{rmq_def}
If $\lb \in \Lambda$, recall that we still denote by $\lb$ the induced element of $\bigoplus_{\chi \in \Irr}{\End(\mathcal{G}(M)_{\chi})}$. Let us also denote by $\lb$ the element of $\End_{\Gamma}(M)$ defined as
\begin{equation*}
\epsilon_M \circ \left(\sum_{\chi \in \Irr}{\lambda(\chi)\mathrm{id}_{{\mathcal{G}(M)}_{\chi}} \otimes \frac{1}{\delta_{\chi}}\mathrm{id}_{X_{\chi}}}\right) \circ \epsilon_M^{-1}.\\
\end{equation*}
\end{rmq}

\begin{deff}
Define \textbf{Nakajima's quiver variety} of $\DMQf$ attached to $\Gamma$-modules $M,M^f$, to a stability parameter $\theta \in \Theta$ and to a deformation parameter $\lambda \in \Lambda$ as follows
\begin{equation*}
\QVTdef\left(M,M^f\right) := \mu^{-1}(\lb)\sslash_{\theta} \Aut_{\Gamma}(M) \simeq \mu^{-1}(\lb)^{\theta-ss}\sslash \Aut_{\Gamma}(M).
\end{equation*}
Denote by $\QVTdef(M,\chi)$ the variety $\QVTdef(M,X_{\chi})$ for some $\chi \in \Irr$ and ${\QVTdef(M):=\QVTdef(M,\chi_0)}$.\\
\end{deff}

\begin{rmq}
\label{var_iso}
It is clear that if $M$ and $\tilde{M}$ are two $\Gamma$-modules and are isomorphic as $\Gamma$-modules, then for every $\Gamma$-module $M^f$, stability parameter $\theta \in \Theta$ and deformation parameter $\lb \in \Lambda$, we have ${\mathcal{M}^{\lb}_{\theta}\left(M,M^f\right) \simeq \mathcal{M}^{\lb}_{\theta}\left(\tilde{M},M^f\right)}$.\\
\end{rmq}

\noindent To be able to compare the two preceding quiver varieties, we need to show that $\mathcal{F}$ is compatible with semistability. Let us introduce a handy characterization of this notion.\\

\begin{deff}
Let $H$ be an algebraic group over $\C$, $\chi: H \to \C^*$ a rational character of $H$ and $\mu: \C^* \to H$ a one-parameter subgroup. Since the morphism $\chi\circ \mu: \C^* \to \C^*$ is of algebraic groups, there exists $k \in \mathbb{Z}$ such that $\forall t \in \C^*, \chi\left(\mu(t)\right)=t^k$. Define the pairing $\langle \chi, \mu \rangle$ to be equal to $k$.\\
\end{deff}

\begin{prop}
\label{mumford_crit}
A point $x \in X$ is $\chi$-semistable if and only if for all algebraic group morphism $\mu: \C^* \to H$, such that $\lim_{t\to 0}{\mu(t).x}$ exists, we have $\langle \chi, \mu \rangle \geq 0$.
\end{prop}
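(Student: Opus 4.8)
The plan is to establish this as an instance of the Hilbert--Mumford numerical criterion, treating the two implications separately: the forward one is a direct computation, while the converse requires an auxiliary affine GIT argument exploiting the reductivity of $H$.

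First I would recall that $x$ is $\chi$-semistable precisely when there exist an integer $n \geq 1$ and a function $f \in \C[X]$ satisfying $f(h \cdot y) = \chi(h)^n f(y)$ for all $h \in H$ and $y \in X$ (a $\chi^n$-semi-invariant) with $f(x) \neq 0$. For the ``only if'' direction, take such an $f$ and an arbitrary one-parameter subgroup $\mu$ for which $\lim_{t \to 0} \mu(t) \cdot x$ exists. Then $f(\mu(t) \cdot x) = \chi(\mu(t))^n f(x) = t^{n \langle \chi, \mu \rangle} f(x)$; since the left-hand side extends to a regular function of $t$ at $t = 0$ (the limit of $\mu(t) \cdot x$ exists and $f$ is polynomial) and $f(x) \neq 0$, the exponent $n \langle \chi, \mu \rangle$ must be nonnegative, whence $\langle \chi, \mu \rangle \geq 0$ because $n \geq 1$.

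For the converse, which is the substantial direction, I would pass to the affine $H$-variety $X \times \C$ equipped with the twisted action $h \cdot (y, z) := (h \cdot y, \chi(h)^{-1} z)$. The ring of invariants decomposes as $\C[X \times \C]^H = \bigoplus_{n \geq 0} \C[X]_{\chi^n} z^n$, where $\C[X]_{\chi^n}$ denotes the space of $\chi^n$-semi-invariants; consequently $x$ is $\chi$-semistable if and only if some invariant $F$ with vanishing $z$-free term satisfies $F(x, 1) \neq 0$. The key claim is that this holds as soon as the orbit closure $\overline{H \cdot (x, 1)}$ is disjoint from the closed invariant subset $X \times \{0\}$: since $H$ is reductive, two disjoint closed invariant subsets of an affine $H$-variety are separated by an invariant function, and this produces $F$ vanishing on $X \times \{0\}$ (killing the $z$-free term) with $F(x, 1) \neq 0$, so that expanding $F$ in powers of $z$ exhibits a semi-invariant $f_n$ of positive weight with $f_n(x) \neq 0$.

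It remains to deduce this disjointness from the numerical hypothesis, and this is where the main difficulty lies. Arguing by contradiction, if $\overline{H \cdot (x, 1)}$ met $X \times \{0\}$, then the Hilbert--Mumford reachability theorem for reductive groups---every point of an orbit closure is reached as the limit of a one-parameter subgroup applied to the original point---would supply a one-parameter subgroup $\nu$ with $\lim_{t \to 0} \nu(t) \cdot (x, 1)$ lying in $X \times \{0\}$. Writing $\nu(t) \cdot (x, 1) = (\nu(t) \cdot x, t^{-\langle \chi, \nu \rangle})$, convergence forces simultaneously that $\lim_{t \to 0} \nu(t) \cdot x$ exists and that $t^{-\langle \chi, \nu \rangle} \to 0$, i.e.\ $\langle \chi, \nu \rangle < 0$, contradicting the hypothesis applied to $\nu$. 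The hard part is thus the invocation of the reachability theorem, which rests essentially on the reductivity of $H$; everything else reduces to the separation property of affine GIT and the elementary limit computation above.
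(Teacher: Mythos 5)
Your proposal is correct in substance, but it is a genuinely different route from the paper's: the paper gives no argument at all and simply refers to \cite[Theorem 2.1]{Mum94}. What you write out is essentially King's self-contained proof of the character-twisted affine Hilbert--Mumford criterion: the forward implication by evaluating a $\chi^n$-semi-invariant $f$ along the one-parameter orbit and comparing exponents, and the converse by passing to $X \times \C$ with the action $h\cdot(y,z)=(h\cdot y,\chi(h)^{-1}z)$, identifying $\C[X\times\C]^H=\bigoplus_{n\geq 0}\C[X]_{\chi^n}z^n$, separating the closed invariant sets $\overline{H\cdot(x,1)}$ and $X\times\{0\}$ by an invariant function (this is where reductivity of $H$ enters), and excluding their intersection by a one-parameter subgroup computation. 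Compared with the paper's bare citation, your argument makes the role of the two black boxes explicit (the separation property of affine GIT quotients, and the reachability theorem), which is genuinely clarifying; the cost is length, and the paper's choice to outsource the statement is also defensible since it is classical.

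One caveat you should fix: the parenthetical gloss of the reachability theorem --- ``every point of an orbit closure is reached as the limit of a one-parameter subgroup applied to the original point'' --- is false as stated for non-abelian reductive groups. The correct statement (Birkes, Kempf) is that if $Y$ is a closed $H$-invariant subset meeting $\overline{H\cdot p}$, then there exists a one-parameter subgroup $\nu$ of $H$ such that $\lim_{t\to 0}\nu(t)\cdot p$ exists and lies in $Y$; equivalently, one-parameter limits reach the unique closed orbit inside $\overline{H\cdot p}$, not an arbitrarily prescribed boundary point. Fortunately this weaker, true statement is exactly what your argument consumes, since $X\times\{0\}$ is closed and invariant, so the proof goes through once the citation is stated correctly.
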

\begin{proof}
The proof can be found here \cite[Theorem 2.1]{Mum94}.
\end{proof}

\noindent We can use Proposition \ref{mumford_crit} to show that the functors $\mathcal{F}$ and $\mathcal{G}$ are compatible with semistability.\\
\begin{lemme}
\label{link_ss}
Take $V,V^f$ two $\Irr$-graded complex vector spaces and $M,M^f$ two $\Gamma$-modules.\\
The element ${(x,v^1,v^2) \in \RepQ_{V,V^f}}$ is $\chi_{\theta}$-semistable if and only if $\mathcal{F}(x,v^1,v^2)$ is ${\chi_{\theta}\circ p_{V}^{-1}}$-semistable. Moreover $(\Delta,Z_1,Z_2) \in \RepQ_{M,M^f}$ is $\chi_{\theta}\circ p_M$-semistable if and only if $\mathcal{G}(\Delta,Z_1,Z_2)$ is $\chi_{\theta}$-semistable.
\end{lemme}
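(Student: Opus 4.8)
The plan is to deduce the statement directly from the numerical characterization of semistability in Proposition \ref{mumford_crit}, using that $F_{y^{\Omega}}$ and $G_{\tilde{y}^{\Omega}}$ give equivariant isomorphisms of the underlying affine varieties for which the acting groups and the relevant characters correspond under the group isomorphisms $p_V$ and $p_M$. First I would record that, since the assignments $x \mapsto \Delta^x$, $v^1 \mapsto Z_1^v$, $v^2 \mapsto Z_2^v$ (and their analogues for $G_{\tilde{y}^{\Omega}}$) are linear and bijective by the equivalence of categories of Proposition \ref{equiv cat}, the induced map $\phi: \RepQ^{\Gamma}_{V,V^f} \to \RepQ^{\Gamma}_{F_{y^{\Omega}}(V),F_{y^{\Omega}}(V^f)}$ is an isomorphism of affine varieties (in fact a linear isomorphism of the ambient vector spaces). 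By Proposition \ref{link_action}, $\phi$ is $\G(V)$-equivariant once $\G(V)$ acts on the target through $p_V$; that is, $\phi(g.z)=p_V(g).\phi(z)$ for every $g \in \G(V)$.

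The core of the argument is then to transport the three ingredients appearing in Proposition \ref{mumford_crit} --- one-parameter subgroups, the existence of limits, and the pairing --- along $\phi$ and $p_V$. Since $p_V: \G(V) \iso \Aut_{\Gamma}(F_{y^{\Omega}}(V))$ is an isomorphism of algebraic groups, $\mu \mapsto p_V\circ\mu$ is a bijection between the one-parameter subgroups of $\G(V)$ and those of $\Aut_{\Gamma}(F_{y^{\Omega}}(V))$. Equivariance gives $\phi(\mu(t).z)=(p_V\circ\mu)(t).\phi(z)$, so the orbit curve on the target is the image under the homeomorphism $\phi$ of the orbit curve on the source; hence $\lim_{t\to 0}\mu(t).z$ exists if and only if $\lim_{t\to 0}(p_V\circ\mu)(t).\phi(z)$ exists. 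Finally, the character on the target being $\chi_{\theta}\circ p_V^{-1}$, one has the identity of algebraic group morphisms $(\chi_{\theta}\circ p_V^{-1})\circ(p_V\circ\mu)=\chi_{\theta}\circ\mu$, whence $\langle \chi_{\theta}\circ p_V^{-1}, p_V\circ\mu\rangle=\langle \chi_{\theta},\mu\rangle$. Putting these three correspondences together, the numerical condition of Proposition \ref{mumford_crit} holds for $z$ with respect to $(\G(V),\chi_{\theta})$ if and only if it holds for $\phi(z)$ with respect to $(\Aut_{\Gamma}(F_{y^{\Omega}}(V)),\chi_{\theta}\circ p_V^{-1})$, which is exactly the claimed equivalence. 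The statement for $G_{\tilde{y}^{\Omega}}$ and $p_M$ is entirely symmetric, using the second equivariant isomorphism of Proposition \ref{link_action} and the character $\chi_{\theta}\circ p_M$.

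I do not expect a genuine obstacle here: the whole proof is a formal consequence of having an equivariant isomorphism of affine varieties together with matching groups and characters. The only point requiring a little care is the transport of the limit condition, where I must use that $\phi$ is an isomorphism of varieties (a homeomorphism for the relevant topology) so that a one-parameter orbit degenerates on one side precisely when its image degenerates on the other; this is immediate once the linearity and bijectivity of $F_{y^{\Omega}}$ on representation spaces has been noted. It is also worth checking at the outset that Proposition \ref{mumford_crit} applies on both sides, i.e. that $\G(V)$ and $\Aut_{\Gamma}(M)$ are reductive (being products of general linear groups, the latter by Schur's Lemma) and that the $\RepQ$-spaces are affine, so that the numerical criterion is available.
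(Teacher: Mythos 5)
Your proof is correct and follows essentially the same route as the paper: both rest on the numerical criterion of Proposition \ref{mumford_crit}, transporting one-parameter subgroups through $p_V$ (resp. $p_M$), using the equivariance of Proposition \ref{link_action} together with continuity to transfer the existence of limits, and concluding with the pairing identity $\langle \chi_{\theta}\circ p_V^{-1}, p_V\circ\mu\rangle=\langle \chi_{\theta},\mu\rangle$. The only cosmetic difference is that you invoke bijectivity of the induced map on representation spaces (a fact the paper only establishes later, in Theorem \ref{link_var}), whereas the paper's proof sidesteps this by using the continuity of the quasi-inverse $\eta^{-1}\circ G_{\tilde{y}^{\Omega}}$ as a left inverse to pull limits back.
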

\begin{proof}
Take  $\mu: \C^* \to \Aut_{\Gamma}\left(\mathcal{F}(V)\right)$ such that $\lim_{t\to 0}{\mu(t).\mathcal{F}(x,v^1,v^2)}$ exists. Using Proposition \ref{link_action}, we have for all $t \in \C^*$:
\begin{equation*}
\mu(t).\mathcal{F}(x,v^1,v^2)=\mathcal{F}\left(p_V^{-1}(\mu(t)).(x,v^1,v^2)\right).
\end{equation*}
Using the continuity of $\eta^{-1}\circ \mathcal{G}$ we have that $\lim_{t\to 0}{p_V^{-1}(\mu(t)).(x,v^1,v^2)}$ exists. Since $(x,v^1,v^2)$ is $\chi_{\theta}$-semistable by hypothesis, ${\langle \chi_{\theta}, p_V^{-1}\circ \mu \rangle = \langle \chi_{\theta} \circ p_V^{-1}, \mu \rangle \geq 0}$. Conversely we can use the equivariance and continuity of $\mathcal{F}$ to conclude.

The same arguments can be used if we start with $(\Delta,Z_1,Z_2) \in \RepQ_{M,M^f}$ and replaces $\mathcal{F}$ with $\mathcal{G}$ and $\eta^{-1}\circ \mathcal{G}$ with $\epsilon \circ \mathcal{F}$.
\end{proof}

\noindent We can now compare the defined varieties using $\mathcal{F}$.\\
\begin{thm}
\label{link_var}
For all parameters $((d,d^f),\theta,\lb)\in \Delta^+\times \Theta \times \Lambda$, $\mathcal{F}$ induces an isomorphism  $\tau:\QVTdef\left(d,d^f\right) \iso \mathcal{M}^{\lb}_{\theta}\left(\mathcal{F}\left(V^d\right),\mathcal{F}\left(V^{d^f}\right)\right)$.
\end{thm}
\begin{proof}
Consider the morphism $\hat{\tau}: \RepQ_{V,V^f} \to \RepQ_{\mathcal{F}(V),\mathcal{F}(V^f)}$ induced by $\mathcal{F}$. This map is linear and computing dimensions gives that ${\dim\left(\RepQ_{V,V^f}\right)=\dim\left(\RepQ_{\mathcal{F}(V),\mathcal{F}(V^f)}\right)}$. Let us show that $\hat{\tau}$ is injective. Take $(x,v^1,v^2) \in \mathrm{Ker}(\hat{\tau})$. By construction, we have that $v^1=0$ and $v^2=0$. Moreover, ${\mathcal{F}(x)=0}$ implies that ${\forall \chi \in \Irr, \sum_{h \in \overline{E}, h''=\chi}{y_h \otimes x_h}=0}$. Take $\chi \in \Irr$ and $h \in \overline{E}$ such that $h''=\chi$, then $y_h \neq 0$, and in particular $x_h=0$. The morphism $\hat{\tau}$ is then an isomorphism. Thanks to Proposition \ref{link_momentum} and Lemma \ref{link_ss}, we have ${\tilde{\tau}:{\mu_\varepsilon}^{-1}(\lb)^{\theta-ss} \iso \mu^{-1}(\lb)^{\theta-ss}}$ which is the restriction of the isomorphism $\hat{\tau}$ to ${{\mu_{\varepsilon}}^{-1}(\lb)^{\theta-ss}}$ . Finally, Proposition \ref{link_action} shows that $\tilde{\tau}$ induces the desired isomorphism.
\end{proof}

\noindent When not specified, the deformation parameter is taken to be $0 \in \Lambda$ and if the stability parameter is not specified it is also taken to be $0 \in \Theta$.

\subsection{Important results for McKay quiver varieties}
\label{imp_res}

In this subsection, we recall important results obtained for Nakajima's quiver varieties that will be used in the following sections. The next Proposition is a reformulation of \cite[Theorem 2.8]{Nak94} and of \cite[Section 1]{C-B01}.\\

\begin{prop}
\label{prop_irr}
Let $(d,d^f) \in \Delta^+$ be a dimension parameter. If ${(\theta,\lb)\in \Q \times \Lambda^{++}\setminus \{0,0\}}$, then $\QVTdef\left(d,d^f\right)$ is smooth and irreducible.
\end{prop}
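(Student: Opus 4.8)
The plan is to transport both statements to the standard theory of \cite{Nak94} and \cite{C-B01}, from which the proposition is a direct reformulation. By construction $\QVTdef(d,d^f)$ is the GIT quotient $(\mu^{\epsilon_{\Omega}}_{\Gamma})^{-1}(\lb)^{\boldsymbol{\theta}-ss}\sslash \G(d)$ of a Nakajima quiver variety over the double framed McKay quiver. First I would apply Crawley-Boevey's framing reduction: adjoin to $Q_\Gamma$ a single vertex $\infty$, joined to each $\chi\in I_\Gamma$ by $d^f_\chi$ arrows, and view the framed representations as ordinary representations of this enlarged quiver with dimension vector $\alpha:=(d,1)$, of dimension $1$ at $\infty$. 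The framing maps $v^1,v^2$ become genuine arrows, the stability parameter extends to King's stability, and the deformation parameter $\lb$ extends to a parameter with $\lb\cdot\alpha=0$, its value at $\infty$ being forced by $\lb_\infty:=-\lb\cdot d$. This identifies $\QVTdef(d,d^f)$ with the GIT quotient of the moment-map fibre $\mu^{-1}(\lb)$ of this unframed quiver, precisely the objects studied in the two references.

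The heart of the argument is that the strict positivity $\lb\in\Lambda^{++}_\Gamma$ makes every point of $\mu^{-1}(\lb)$ a \emph{simple} representation of the deformed preprojective algebra. Indeed, a submodule of dimension vector $\beta$ forces $\lb\cdot\beta=0$; but for a proper nonzero $\beta\leq\alpha$ one computes, with $\lb$ constant equal to some $c>0$ on $I_\Gamma$, either $\lb\cdot\beta=c\sum_{\chi}\beta_\chi>0$ when $\beta$ avoids $\infty$, or $\lb\cdot\beta=c(\sum_\chi\beta_\chi-\sum_\chi d_\chi)<0$ when $\beta$ meets $\infty$ and is proper. In both cases $\lb\cdot\beta\neq 0$, so no proper nonzero submodule exists. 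Hence every representation in $\mu^{-1}(\lb)$ is simple, in particular $\boldsymbol{\theta}$-stable for every $\boldsymbol{\theta}$, so $\mu^{-1}(\lb)^{\boldsymbol{\theta}-ss}=\mu^{-1}(\lb)$.

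From here the two conclusions follow from the cited results. For smoothness I would invoke \cite[Theorem 2.8]{Nak94}: simplicity makes the stabiliser of every point equal to the scalars, which is the minimal possible, so by the standard Jacobian computation $\mu^{-1}(\lb)$ is smooth, the group $\G(\alpha)$ acts on it modulo its global stabiliser freely and with closed orbits, and the geometric quotient of a smooth variety by such a reductive action is again smooth. For irreducibility I would invoke \cite[Section 1]{C-B01}: since $\mu^{-1}(\lb)$ carries simple representations, its dimension vector $\alpha$ lies in the set $\Sigma_{\lb}$ of positive roots governing Crawley-Boevey's irreducibility theorem, whence $\mu^{-1}(\lb)$ is irreducible; an irreducible variety has irreducible GIT quotient, so $\QVTdef(d,d^f)$ is irreducible. (When $\mu^{-1}(\lb)$ is empty the statement is vacuous.)

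The step I expect to be the main obstacle is the faithful translation of our data into Crawley-Boevey's and Nakajima's hypotheses: one must check that the framing reduction respects the moment map, the stability condition and the parameter $\lb$ exactly, and that the genericity encoded by $\lb\in\Lambda^{++}_\Gamma$ (or, in the boundary case $\lb=0$ with $\theta\neq 0$, by the constant stability parameter through King's criterion) is precisely the genericity required for the smoothness in \cite{Nak94} and the membership $\alpha\in\Sigma_{\lb}$ underlying the irreducibility in \cite{C-B01}. Once this dictionary is set up, the two theorems apply verbatim.
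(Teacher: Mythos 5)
Your argument for the half of the statement with $\lb\in\Lambda^{++}_{\Gamma}$ is sound, and it is essentially an unpacking of the reference the paper itself invokes there: the paper's proof simply cites \cite[Theorem 5.2.2]{Ginz08} for $\lb\neq 0$, and that theorem's proof runs exactly through your route (Crawley-Boevey's framing vertex $\infty$, the observation that $\lb\cdot\beta\neq 0$ for every proper nonzero $\beta\leq(d,1)$ forces all points of $\mu^{-1}(\lb)$ to be simple, then smoothness from freeness of the action and irreducibility from the $\Sigma_{\lb}$ theory of \cite{C-B01}). Writing $\lb$ as a constant $c$ is a harmless slip; positivity of $\lb(\chi)$ at each vertex is all the computation uses.

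The genuine gap is the other half of the statement, $\lb=0$ and $\theta\neq 0$, which you only address in a parenthesis. There your central mechanism collapses: when $\lb=0$ every dimension vector $\beta$ satisfies $\lb\cdot\beta=0$, so nothing forces points of $\mu^{-1}(0)$ to be simple, and $\mu^{-1}(0)^{\boldsymbol{\theta}-ss}\neq\mu^{-1}(0)$ in general. Worse, the route you indicate for irreducibility --- membership $\alpha=(d,1)\in\Sigma_{\lb}$, hence irreducibility of the moment-map fibre --- actually fails in this regime. Take $\Gamma=\mu_2$, $d=\delta^{\Gamma}$, framing at $\chi_0$: then $\alpha=(1,1,1)$ has $p(\alpha)=1$ but admits the decomposition $(1,1,0)+(0,0,1)$ whose $p$-values also sum to $1$, so $\alpha\notin\Sigma_{0}$ and $\mu^{-1}(0)$ is reducible, even though the quiver variety $\mathcal{M}^{\Gamma}_{\boldsymbol{\theta}}\left(\delta^{\Gamma},\chi_0\right)$ (the minimal resolution of $\C^2/\mu_2$) is smooth and irreducible. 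For $\lb=0$ and generic $\theta$ the irreducibility lives only on the semistable locus, and the standard proofs --- those behind \cite[Theorems 10.35, 10.37]{Kir06}, which is what the paper cites for this case --- combine generic stability (semistable $=$ stable, free action, hence smoothness) with a separate connectedness argument via hyperkähler rotation to a nonzero deformation parameter. Your proposal as written does not supply either ingredient, so this case remains unproved.
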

\begin{proof}
If $\lb \neq 0$, \cite[Theorem $5.2.2$]{Ginz08} gives that $\QVTdef\left(d,d^f\right)$ is smooth and connected. Moreover, if $\theta \neq 0$, we have that  $\QVT\left(d,d^f\right)$ is smooth and connected so irreducible thanks to \cite[Example 10.36]{Kir06} and \cite[Theorem $10.35$, $10.37$]{Kir06}.
\end{proof}

\noindent One can easily compute the dimension of McKay quiver varieties when the dimension parameter is of a particularly nice form.\\

\begin{prop}
\label{dim_r_delt}
If $\theta \in \Theta^{++}$ and if $r$ is a positive integer, then  the symplectic variety $\mathcal{M}_{\theta}\left(r\delta\right)$ is smooth connected and has dimension $2r$.
\end{prop}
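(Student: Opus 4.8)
The plan is to recognise $\mathcal{M}^{\Gamma}_{\boldsymbol{\theta}}(r\delta^{\Gamma})$ as an ordinary Nakajima quiver variety and to read off the three assertions from the genericity of $\theta$ together with the representation-theoretic identities furnished by the McKay correspondence. First I would unwind the conventions: by definition $\mathcal{M}^{\Gamma}_{\boldsymbol{\theta}}(r\delta^{\Gamma})=\mathcal{M}^{\Gamma}_{\boldsymbol{\theta},0}(M,X_{\chi_0})$, where $M$ is a $\Gamma$-module of dimension vector $r\delta^{\Gamma}$ and the framing is the trivial module $X_{\chi_0}$. Since $\delta^{\Gamma}_{\chi}=\dim(X_{\chi})$, such an $M$ is isomorphic to $\C[\Gamma]^{\oplus r}$, so by Remark \ref{var_iso} we may take $M=\C[\Gamma]^{\oplus r}$. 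Choosing an orientation $\Omega$ and applying Theorem \ref{link_var} identifies this variety with the Nakajima quiver variety of $\DMQf$ with dimension vector $v:=r\delta^{\Gamma}$ framed with dimension $1$ at the vertex $\chi_0$, so it suffices to establish the three properties for $\mathcal{M}^{\Gamma}_{\boldsymbol{\theta},0}(M,X_{\chi_0})$.

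For smoothness and connectedness I would argue exactly as in Proposition \ref{prop_irr}, the only new point being genericity of the stability parameter. For any sub-dimension-vector $0<v'\le v$ one has $v'\ge 0$ and $v'\neq 0$, hence $\theta\cdot v'>0$ because $\theta\in\Theta_{\Gamma}^{++}$; thus $\theta$ lies on no semistability wall, every $\theta$-semistable point of $\mu_{\Gamma}^{-1}(0)$ is $\theta$-stable, and $\Aut_{\Gamma}(M)$ acts with trivial stabilisers. The results cited in Proposition \ref{prop_irr} then give that the variety is smooth and irreducible, in particular connected; as a symplectic reduction at a free action (with $0$ a regular value of $\mu_{\Gamma}$) it carries the symplectic form induced by $\omega_{\Gamma}$.

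For the dimension I would use that, the action being free and $\mu_{\Gamma}$ a submersion on the stable locus,
\begin{equation*}
\dim \mathcal{M}^{\Gamma}_{\boldsymbol{\theta}}(r\delta^{\Gamma})=\dim_{\C}\RepQ^{\Gamma}_{M,X_{\chi_0}}-2\dim_{\C}\Aut_{\Gamma}(M).
\end{equation*}
Both terms are computed from $\End_{\Gamma}(\C[\Gamma])\cong\C[\Gamma]$ (of dimension $|\Gamma|$) and from $\Xstd\otimes\C[\Gamma]\cong\C[\Gamma]^{\oplus 2}$: one finds $\dim\Aut_{\Gamma}(M)=r^2|\Gamma|$, while $\dim\Hom_{\Gamma}(\Xstd\otimes M,M)=2r^2|\Gamma|$ and $\dim\Hom_{\Gamma}(X_{\chi_0},M)=\dim\Hom_{\Gamma}(M,X_{\chi_0})=r$, so that $\dim\RepQ^{\Gamma}_{M,X_{\chi_0}}=2r^2|\Gamma|+2r$ and the difference is $2r$. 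Equivalently this is the expected-dimension count $2\,w\cdot v-(v,v)$ with $w\cdot v=r\,\delta^{\Gamma}_{\chi_0}=r$ and $(v,v)=r^2(\delta^{\Gamma},\delta^{\Gamma})=0$, the last equality because $\delta^{\Gamma}$ is the minimal, isotropic, imaginary root of $\tilde{T}_{\Gamma}$ (it satisfies $A(\tilde{T}_{\Gamma})\delta^{\Gamma}=0$, cf. Proposition \ref{mckay} and Lemma \ref{sum works}).

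The main obstacle is the genericity/freeness step. For a dimension vector that is a proper multiple of the imaginary root $\delta^{\Gamma}$ the naive expected-dimension count can fail, and one must ensure there are no strictly semistable points; it is precisely the framing at $\chi_0$ together with the strict positivity of $\theta$ that forces $\theta$-semistable $=\theta$-stable and a free action. I would spend the most care on verifying this, if necessary reducing to a constant positive $\theta$ covered by Proposition \ref{prop_irr} and invoking invariance of the GIT quotient on the chamber containing $\Theta_{\Gamma}^{++}$.
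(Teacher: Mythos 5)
Your proof is correct, and it is worth comparing the two routes. For smoothness and connectedness you do what the paper does: identify $\mathcal{M}^{\Gamma}_{\boldsymbol{\theta}}(r\delta^{\Gamma})$ with a Nakajima quiver variety (Theorem \ref{link_var}) and invoke the standard results behind Proposition \ref{prop_irr}, the one genuinely new point being that $\theta\in\Theta_{\Gamma}^{++}$ is generic for the framed quiver; your check that $\theta\cdot v'>0$ for every $0<v'\le r\delta^{\Gamma}$, so that semistable equals stable and $\Aut_{\Gamma}(M)$ acts freely on the stable locus, is the right justification. The difference lies in the dimension step. The paper's entire proof is one line: it cites the dimension formula of \cite[Theorem $10.35$]{Kir06}, which for dimension vector $r\delta^{\Gamma}$ framed by $1$ at $\chi_0$ gives $2r\delta^{\Gamma}_{\chi_0}-{}^{t}(r\delta^{\Gamma})A(\tilde{T}_{\Gamma})(r\delta^{\Gamma})$, and the quadratic term vanishes because $A(\tilde{T}_{\Gamma})\delta^{\Gamma}=0$, whence $2r$; this is exactly your closing ``equivalently'' remark. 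Your primary argument is instead a hands-on symplectic-reduction count, $\dim\RepQ^{\Gamma}_{M,X_{\chi_0}}-2\dim\Aut_{\Gamma}(M)$ with $M\cong\C[\Gamma]^{\oplus r}$, using $\dim\End_{\Gamma}(\C[\Gamma])=|\Gamma|$ and $\Xstd\otimes\C[\Gamma]\cong\C[\Gamma]^{\oplus 2}$; the numbers $2r^{2}|\Gamma|+2r$ and $r^{2}|\Gamma|$ are correct, and this route has the merit of making explicit the freeness and submersion hypotheses that the citation hides, at the cost of length. One small slip: the identity $A(\tilde{T}_{\Gamma})\delta^{\Gamma}=0$ is not the content of Lemma \ref{sum works}; it is the defining property of $\delta(\tilde{T}_{\Gamma})$ (equivalently, the McKay relation $2\delta^{\Gamma}_{\chi}=\sum_{h\in\overline{E_{\Gamma}},\,h'=\chi}{\delta^{\Gamma}_{h''}}$ that is used inside that lemma's proof). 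This misattribution does not affect the validity of the argument.
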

\begin{proof}
Let us apply \cite[Theorem $10.35$]{Kir06}. We have by definition that $A\delta=0$ so that the dimension of $\mathcal{M}_{\theta}(r\delta)$ is $2r\delta_{\chi_0}=2r$.
\end{proof}

\noindent In that case, semistability becomes also a much simpler condition.\\
\begin{lemme}
\label{lemme_ss}
Take $\theta$ in $\Theta^{++}$.\\
$(x,v^1,v^2) \in \RepQ_{d,d^f}$ is $\theta$-semistable if and only if for all $\chi \in \Irr$ and for all $S_{\chi} \subset V_{\chi}^d$, if $\left( \forall h \in \overline{E}, x_h(S_{h'}) \subset S_{h''} \text{ and } \forall \chi \in \Irr, \mathrm{Im}(v^1_{\chi}) \subset S_{\chi} \right)$, then $\bigoplus_{\chi \in \Irr}{S_{\chi}}= V^d$.\\
Moreover $(\Delta,Z_1,Z_2) \in \RepQ_{M,M^f}$ is $\theta$-semistable if and only if for all $\Gamma$-submodules $M'$ of $M$, if  $\Delta(\Xstd \otimes M') \subset M'$ and $\mathrm{Im}(Z_1) \subset M'$, then $M'=M$.
\end{lemme}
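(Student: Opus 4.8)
The plan is to prove the first (quiver-representation) characterization directly from the Hilbert--Mumford criterion recalled in Proposition \ref{mumford_crit}, and then to transport it to the $\Gamma$-module side through the equivalence of categories of Proposition \ref{equiv cat} together with its compatibility with semistability established in Lemma \ref{link_ss}. Since $\theta \in \Theta_{\Gamma}^{++}$, we may rescale by a positive integer $N$ so that $\chi_{N\theta}$ is an honest character of $\G(V^d)$; positivity of $N$ does not change the sign of any Hilbert--Mumford pairing, so we simply work with the datum $\theta(\chi) > 0$ for all $\chi \in I_{\Gamma}$ and the character $\chi_{\theta}$.

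For the first equivalence I would analyse one-parameter subgroups $\mu\colon \C^* \to \G(V^d)$ by decomposing each $V^d_{\chi} = \bigoplus_{n \in \Z} V^d_{\chi}(n)$ into $\mu$-weight spaces and writing $V^d_{\chi}(\geq m) := \bigoplus_{n \geq m} V^d_{\chi}(n)$. For the necessity of the combinatorial condition I argue by contraposition: given a graded subspace $S = \bigoplus_{\chi} S_{\chi} \subsetneq V^d$ with $x_h(S_{h'}) \subset S_{h''}$ for all $h \in \overline{E_{\Gamma}}$ and $\mathrm{Im}(v^1_{\chi}) \subset S_{\chi}$ for all $\chi$, choose graded complements $C_{\chi}$, set $C := \bigoplus_{\chi} C_{\chi} \neq 0$, and let $\mu(t)$ act by weight $0$ on $S$ and weight $-1$ on $C$. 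Invariance of $S$ forces the $S \to C$ block of each $x_h$ to vanish, $\mathrm{Im}(v^1) \subset S$ keeps the image in weight $0$, and $C$ carries only nonpositive weight; hence $\lim_{t \to 0}\mu(t).(x,v^1,v^2)$ exists, while the pairing computes to $\langle \chi_{\theta}, \mu \rangle = -\sum_{\chi} \theta(\chi)\dim C_{\chi} < 0$, contradicting $\boldsymbol{\theta}$-semistability. For sufficiency, take any $\mu$ for which the limit exists; existence forces $x_h(V^d_{h'}(\geq m)) \subset V^d_{h''}(\geq m)$ and $\mathrm{Im}(v^1_{\chi}) \subset V^d_{\chi}(\geq 0)$, so the graded subspace $\bigoplus_{\chi} V^d_{\chi}(\geq 0)$ is $x$-invariant and contains $\mathrm{Im}(v^1)$. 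The hypothesis then gives $\bigoplus_{\chi} V^d_{\chi}(\geq 0) = V^d$, i.e. $\mu$ has no negative weights, whence $\langle \chi_{\theta}, \mu \rangle = \sum_{\chi} \theta(\chi)\sum_{n \geq 0} n \dim V^d_{\chi}(n) \geq 0$, and Proposition \ref{mumford_crit} yields $\boldsymbol{\theta}$-semistability.

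For the second characterization I would invoke Lemma \ref{link_ss}, which gives that $(\Delta,Z_1,Z_2)$ is $\boldsymbol{\theta}$-semistable if and only if $G_{\tilde{y}^{\Omega}}(\Delta,Z_1,Z_2)$ is, so it remains to match the combinatorial conditions across the equivalence of Proposition \ref{equiv cat}. Under the isotypic decomposition $\epsilon_M$, a $\Gamma$-submodule $M' \subset M$ corresponds to the graded subspace $S$ with $S_{\chi} = \Hom_{\Gamma}(X_{\chi}, M')$, and by semisimplicity every graded subspace arises in this way; moreover $\mathrm{Im}(Z_1) \subset M'$ translates to $\mathrm{Im}(v_{Z_1}) \subset S$, while $\Delta(\Xstd \otimes M') \subset M'$ translates to $x^{\Delta}_h(S_{h'}) \subset S_{h''}$ for all $h$, using the definition of $x^{\Delta}$ and the section property of $\tilde{y}^0$ recorded in Lemma \ref{sum works}. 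Applying the already proved first part to $G_{\tilde{y}^{\Omega}}(\Delta,Z_1,Z_2)$ then delivers the stated criterion.

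The step I expect to require the most care is the sign bookkeeping and the verification of limit-existence in the Hilbert--Mumford analysis, precisely because of the asymmetric roles of $v^1$ and $v^2$: as only $\G(V)$ (and not $\G(V)\times\G(V^f)$) acts, the incoming map $v^1$ produces the ``contains the image'' clause, whereas the outgoing map $v^2$ merely constrains which $\mu$ admit a limit and must deliberately \emph{not} be used in the sufficiency direction. Checking that the destabilizing subgroup (weight $0$ on $S$, weight $-1$ on the complement) admits a limit for all three data $x, v^1, v^2$ at once, and that the resulting pairing is strictly negative, is exactly where the positivity hypothesis $\theta \in \Theta_{\Gamma}^{++}$ is genuinely used.
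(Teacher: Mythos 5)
Your proposal is correct and follows essentially the same route as the paper's own proof: the first equivalence via the Hilbert--Mumford criterion (Proposition \ref{mumford_crit}), using the weight-$0$/weight-$(-1)$ one-parameter subgroup on $S$ and its complement for necessity and the weight filtration $V^d_{\geq 0}$ for sufficiency, and the second equivalence by transporting semistability through Lemma \ref{link_ss} and matching submodules of $M$ with $x^{\Delta}$-stable graded subspaces via Lemma \ref{sum works}. The only cosmetic difference is that you package the second part as a bijective dictionary between $\Gamma$-submodules and graded subspaces, where the paper spells out the two implications separately with the same ingredients.
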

\begin{proof}
Let us proceed by contraposition. If we have that for all $\chi \in \Irr, S_{\chi} \subset V_{\chi}^d$ such that it is stable by $x$, that $\mathrm{Im}(v^1_{\chi}) \subset S_{\chi}$  and that $\bigoplus_{\chi \in \Irr}{S_{\chi}} \subsetneq V^d$. Let us construct $\mu: \C^* \to \G(d)$ such that $\lim_{t \to 0}{\lambda(t).(x,v^1,v^2)}$ exists and such that $\langle\chi_{\theta}, \mu\rangle < 0$.\\
For each $\xi \in \Irr$, consider $S_{\xi}^{\perp}$ a supplementary subspace of $S_{\xi}$ in $V^d_{\xi}$.
Define for each $\xi \in \Irr$ and each $\forall t \in \C^*$
\begin{equation*}
(\mu(t))_{\xi} :=
\begin{pmatrix}
   \mathrm{id}_{S_{\xi}}
  & \rvline & 0 \\
\hline
  0 & \rvline &
 t^{-1}\mathrm{id}_{S^{\perp}_{\xi}}
\end{pmatrix} \in \GL(V^d_{\xi})
\end{equation*}
Now, we use the fact that $\forall \xi \in \Irr, S_{\xi}$ is stable under $x$ and $\mathrm{Im}(v^1_{\xi}) \subset S_{\xi}$, to have the existence of  $\lim_{t\to 0}{\mu(t).(x,v^1,v^2)}$. Moreover, there exists $\chi \in \Irr$ such that $\mathrm{dim}(S_{\chi}^{\perp}) \geq 1$ so we have that $\langle\chi_{\theta},\mu \rangle < 0$ which contradicts Proposition \ref{mumford_crit} .\\
Conversely, take a one-parameter subgroup $\mu$ such that  $\lim_{t \to 0}{\mu(t).(x,v^1,v^2)}$ exists.
For all $\chi \in \Irr$, consider the eigenspace decomposition of $\mu$ acting on $V_{\chi}^d=\bigoplus_{k \in \mathbb{Z}}{V^d_{(\chi,k)}}$ meaning that $\forall t \in \C^*,\mu(t)\arrowvert_{V^d_{\chi,k}} = t^k\mathrm{id}_{V^d_{\chi,k}}$. For all $k \in \mathbb{Z}$ denote by $V^d_k:=\bigoplus_{\chi \in \Irr}{V^d_{\chi,k}}$ so that $V^d=\bigoplus_{k \in \mathbb{Z}}{V^d_k}$. Moreover, for $j \in \mathbb{Z}$, denote by $V^d_{\geq j}:=\bigoplus_{k \geq j}{\bigoplus_{\chi \in \Irr}{V^d_{\chi,k}}}$. Then we have $V^d_{\geq j}=\bigoplus_{\chi \in \Irr}{V^d_{\chi,\geq j}}$.\\
Let us prove that for all $k \in \mathbb{Z}$, $V^d_{\geq k}$ is stable by $x$. Take $h \in \overline{E}$, then $x_h:V^d_{h'} \to V^d_{h''}$ can be decomposed into the  direct sum $\bigoplus_{r,s \in \mathbb{Z}}{x_h^{(r,s)}}$ where ${x_h^{(r,s)}: V^d_{(h',r)} \to V^d_{(h'',s)}}$.
From there, for $t \in \C^*$, $\mu(t).x_h=\sum_{r,s \in \mathbb{Z}}{t^{s-r}x_h^{(r,s)}}$. The limit of $\mu(t).x_h$, when $t$ tends to $0$, exists if and only if $x_h^{(r,s)}=0$ for all pairs $(r,s)$ such that $s < r$. This gives that $V^d_{\geq r}$ is stable by $x_h$ for all $h \in \overline{E}$.

The same argument shows that the existence of the limit $\mu(t)_{\chi}.v^1_{\chi}$ implies that $\mathrm{Im}(v^1_{\chi}) \in V^d_{\chi, \geq 0}$ for all $\chi \in \Irr$. To resume, for each  $\chi \in \Irr$ we have an $x$-stable subspace $V^d_{\chi,\geq 0} \subset V^d_{\chi}$ and  $\mathrm{Im}(v^1_{\chi}) \in V^d_{\chi,\geq 0}$. Then by hypothesis, $V^d_{\geq 0} = V^d$. The conclusion follows, $\langle \chi_{\theta}, \mu \rangle \geq 0$.\\
For the second statement, take $(\Delta,Z_1,Z_2) \in \RepQ_{M,M^f}$ which is $\theta$-semistable. Take $M'$ a ${\Gamma\text{-submodule}}$ of $M$ stable by $\Delta$ and containing $\mathrm{Im}(Z_1)$. Thanks to Lemma \ref{link_ss}, we know that $\mathcal{G}(\Delta,Z_1,Z_2)$ is $\theta$-semistable. For each  $\chi \in \Irr$, consider ${S_{\chi}=\Hom_{\Gamma}(X_{\chi},M')}$ which is a subspace of $\Hom_{\Gamma}(X_{\chi},M)$. Since $\Delta(\Xstd \otimes M') \subset M'$, we have by construction of $\mathcal{G}$ that  $\left(\mathcal{G}(\Delta)_h \right)(S_{h'}) \subset S_{h''}$, for all $h \in \overline{E}$.
In addition, we also have that ${\forall \chi \in \Irr, \mathrm{Im}(\mathcal{G}(Z_1)_{\chi}) \subset S_{\chi}}$, since $\mathrm{Im}(Z_1) \subset M'$.

Let us use the first equivalence of this Lemma to deduce that ${\forall \chi \in \Irr, S_{\chi}=\Hom_{\Gamma}(X_{\chi},M)}$, which implies that $M'=M$. Conversely, using Lemma \ref{link_ss} it is enough to show that $\mathcal{G}(\Delta,Z_1,Z_2)$ is $\theta$-semistable. Denote $x:=\mathcal{G}(\Delta)$. Suppose that we have for each $\chi \in \Irr$ a subspace $S_{\chi}$ of $\Hom_{\Gamma}(X_{\chi},M)$ such that for all $h \in \overline{E}, x_h(S_{h'}) \subset S_{h''}$ and for all $\chi \in \Irr, \mathrm{Im}(\mathcal{G}(Z_1)_{\chi}) \subset S_{\chi}$.
Consider now $M'=\epsilon_{M}(\bigoplus_{\chi \in \Irr}{S_{\chi}\otimes X_{\chi}})$ which is a $\Gamma$-submodule of $M$. This submodule is stable by $\Delta$. Take $\chi \in \Irr$, and $h \in \overline{E}$ such that $h'=\chi$. By construction of $x$, we have
\begin{equation*}
x_h(f)=\Delta\circ (\mathrm{id}_{\Xstd} \otimes f) \circ \tilde{y}_h, \quad \forall f \in S_{\chi}.
\end{equation*}
Now
\begin{align*}
\sum_{h \in \overline{E},h'=\chi}{x_h(f)\circ y_h} &= \sum_{h \in \overline{E}}{\Delta\circ ( \mathrm{id}_{\Xstd} \otimes f) \circ \tilde{y}_h \circ y_h}\\
&= \Delta \circ (\mathrm{id}_{\Xstd} \otimes f) \circ \sum_{h \in \overline{E},h'=\chi}{\tilde{y}_h y_h}\\
&= \Delta \circ (\mathrm{id}_{\Xstd} \otimes f)
\end{align*}
The last equality follows from Lemma \ref{sum works}.\\
Then using the hypothesis on $x$, we have for all $(t,f,z) \in \Xstd \times S_{\chi}\times X_{\chi}$
\begin{equation*}
\Delta(t \otimes f(z))=\sum_{h \in \overline{E}, h'=\chi}{x_h(f)(y_h(t \otimes z)}) \in M'.
\end{equation*}
Finally, using the fact that $\forall \chi \in \Irr, \mathrm{Im}(\mathcal{G}(Z_1)_{\chi}) \in S_{\chi}$ it is clear that $\mathrm{Im}(Z_1) \subset M'$. By hypothesis, we have $M'=M$ and in particular that $\forall \chi \in \Irr, S_{\chi}=\Hom_{\Gamma}(X_{\chi},M)$ which shows that $\mathcal{G}(\Delta,Z_1,Z_2)$ is $\theta$-semistable.
\end{proof}

\noindent Let us finish this section by recalling an important isomorphism between McKay quiver varieties. The explicit realization described in Remark \ref{rmq_mckay}, gives geometric insights on the set of dimension parameters $\Delta$, the set of stability parameters $\Theta$ and the set of deformation parameters $\Lambda$. In fact, $(\Delta_{\Gamma})^{**}$ is equal to the root lattice $\mathcal{Q}$. This identifies the dimension parameter $\delta$ with the null root. Concerning $\Theta$, we need to introduce $P_{\mathbb{Q}}^{\vee}:=\{\beta \in \mathfrak{h} \mid \forall \chi \in \Irr, \langle \alpha_{\chi}, \beta \rangle \in \mathbb{Q}\}$. We can then identify $\Lambda$ with $\mathfrak{h}^*/\C\delta$ and $\Theta_{\Gamma}$ with $P_{\mathbb{Q}}^{\vee}/\Q\delta^{\vee}$ in the following way:\\
\begin{center}
$\kappa^{\vee}: \begin{array}{ccc}
P_{\Q}^{\vee}/\Q\delta^{\vee} & \to & \Theta\\
\beta^{\vee} & \mapsto & (\chi \mapsto \langle \alpha_{\chi}, \beta^{\vee} \rangle) \\
\end{array}$,\\
\end{center}
\begin{center}
$\kappa: \begin{array}{ccc}
\mathfrak{h}^*/\C\delta & \to & \Lambda\\
\beta & \mapsto & (\chi \mapsto \langle \beta,\tilde{\alpha}_{\chi} \rangle) \\
\end{array}$.
\end{center}
\vspace*{0.25cm}
Since $\langle \alpha_{\chi},\delta^{\vee} \rangle = 0 = \langle \delta, \tilde{\alpha}_{\chi} \rangle$ for all $\chi \in \Irr$, the morphisms $\kappa$ and $\kappa^{\vee}$ are well defined and it is easy to check that these are isomorphisms of $\mathbb{Z}$-modules.\\

\begin{deff}
\label{def_action}
Let us define an action of $W$ on $\Delta$, $\Theta$ and  $\Lambda$. Denote by $s_{\chi} \in W$, for $\chi \in \Irr$ the generators of this affine Weyl group. Take $d \in \Delta$, $\theta \in \Theta$ and $\lb \in \Lambda$

\begin{equation*}
(s_\chi.d)_{\xi} =
\begin{cases}
(\sum_{h\in \overline{E}, h'=\chi}{d_{h''}}) - d_{\chi} &\text{ if } \chi=\xi\neq \chi_0 \\
(\sum_{h\in \overline{E}, h'=\chi}{d_{h''}}) - d_{\chi} + 1 &\text{ if } \chi=\xi= \chi_0 \\
d_{\xi} &\text{ else},
\end{cases}
\end{equation*}

\begin{equation*}
(s_\chi.\theta)(\xi) =
\begin{cases}
\theta(\chi)+\theta(\xi) &\text{ if } \exists h\in \overline{E}, h'=\chi, h''=\xi\\
-\theta(\chi) &\text{ if } \chi=\xi\\
\theta(\xi) &\text{ else},
\end{cases}
\end{equation*}

\begin{equation*}
(s_\chi.\lambda)(\xi) =
\begin{cases}
\lambda(\chi)+\lambda(\xi) &\text{ if } \exists h\in \overline{E}, h'=\chi, h''=\xi\\
-\lambda(\chi) &\text{ if } \chi=\xi\\
\lambda(\xi) &\text{ else}.
\end{cases}
\end{equation*}

\vspace*{0.25cm}
\end{deff}
\begin{rmq}
\label{rmq_k_equi}
The group $W$ acts by reflections on $P_{\Q}^{\vee}$ and on $\mathfrak{h}^*$. Moreover $\delta^{\vee}$ and $\delta$ are stabilized by $W$. The actions defined on $\Theta$ and on $\Lambda$ turn the isomorphisms $\kappa^{\vee}$ and $\kappa$ into $W$-equivariant isomorphisms.
Moreover, the action on $\Delta$ corresponds to the one defined in \cite[Definition 2.3]{Nak03} in the special case of double, one vertex framed quivers and it is linked to the natural action by reflections on $\mathfrak{h}^*$ (denoted $*$) in the following way. Thanks to the remark at the end of \cite[Definition $2.3$]{Nak03}, we have
\begin{equation}
\label{link_eq}
\forall (\omega,\alpha) \in W \times \mathcal{Q},  \omega*(\Lambda_0-\alpha) = \Lambda_0 - \omega.\alpha
\end{equation}
where $\Lambda_0$ denotes the fundamental weight $\Lambda_{\alpha_0}$.\\
\end{rmq}

\noindent One important isomorphism between Nakajima quiver varieties, for dimension stability and deformation parameters that are linked by the actions of $W$ defined in Definition \ref{def_action}, was discovered by George Lusztig \cite[Corollary $3.6$]{Lusz00}, Andrea Maffei \cite[Proposition 40]{Maff00} and Hiraku Nakajima \cite[Theorem $8.1$]{Nak03}.\\

\begin{prop}
\label{maff}
Let $(d,\theta,\lb) \in \Delta^+  \times \Theta^{++} \times \Lambda$, and $\omega \in W$, such that $\omega.d \in \Delta^+$. We then have an isomorphism
\begin{equation*}
\mathrm{Maff}: \mathcal{M}^{\lb}_{\theta}(d) \iso \mathcal{M}^{\omega.\lb}_{\omega.\theta}(\omega.d)
\end{equation*}
of algebraic varieties.\\
\end{prop}

\noindent Finally, Ivan Losev \cite[Lemma $6.4.2$]{L12} has proved the following result.\\
\begin{prop}
\label{losev}
Let $(d,\theta) \in \Delta^+ \times \Theta^{++}$, and $\omega \in W$, such that $\omega.d=d$. The morphism
\begin{equation*}
\mathrm{Maff}: \mathcal{M}_{\theta}(d) \iso \mathcal{M}_{\theta}^{\Gamma}(d)
\end{equation*}
is an isomorphism of algebraic varieties over $\mathcal{M}(d):=\mathcal{M}_0(d)$.
\end{prop}

\section{Decomposition of $\Gamma$-fixed point loci}
\label{chap_decomp}

The Jordan quiver is the quiver with one vertex and one arrow. The name of this quiver comes from the fact that, over an algebraically closed field, the classification of the representations of this quiver is given by the Jordan normal form of a matrix. We will define an action of the group $\Gamma$ on this quiver variety and describe the irreducible components of the $\Gamma$-fixed point locus of the Jordan quiver variety for nonzero stability or nonzero deformation parameter. The irreducible components will be identified with McKay quiver varieties. After introducing the Jordan quiver variety in the first subsection, the second subsection is dedicated to the construction of a morphism from the $\Gamma$-fixed point locus of the Jordan quiver variety to the representation space $\RepQ_{M}$, for a $\Gamma$-module $M$ built out of the $\Gamma$-fixed point. In the third subsection, we will build a morphism from a McKay quiver variety to the $\Gamma$-fixed point locus of the Jordan quiver variety. Finally, the last subsection binds these constructions together to prove the main theorem of this section.
The setting developed in section \ref{chap_quiver}, will be of great use here.

\subsection{McKay and Jordan quiver}

From now on, we will mainly be interested in two types of double framed quivers. The first quiver is $\DMQf$, the double framed McKay quiver attached to a fixed finite subgroup $\Gamma$ of $\SL_2(\C)$. To be more specific, the double quiver of $Q_{\Gamma}$ framed at the vertex $\chi_0$ will be the main player. The second quiver is the Jordan quiver that will be denoted $Q_{\bullet}$
\begin{figure}[H]
  \centering
  \begin{tikzpicture}
    \draw[fill = black] (-0.7 ,0) circle (0.08);
    \draw ([shift=(160 : 0.7)]0, 0) arc (160 : 0: 0.7) node [anchor = west] {$\alpha$};
    \draw[->] (0.7, 0)  arc (0 : -160 :  0.7);
  \end{tikzpicture}
  \label{jordan quiver}
\end{figure}

\noindent Let us denote $G_{\bullet}$ the underlying undirected graph. Consider $\overline{Q_{\bullet^f}}$ the double framed quiver of $G_{\bullet}$
\begin{figure}[H]
  \centering
  \begin{tikzpicture}
    \draw[fill = black] (-0.7 ,0) circle (0.08);
    \draw[fill = black] (-2.5 ,0) circle (0.08) node [anchor = east] {$\infty$};
    \draw ([shift=(160 : 0.7)]0, 0) arc (160 : 0: 0.7) node [anchor = west] {$\alpha$};
    \draw ([shift=(-150 : 0.5)]0, 0) arc (-150 : 0: 0.5) node [anchor = east] {$\beta$};
    \draw (-0.9, 0.2) arc (0 : 90: 0.6 and 0.3 ) node [anchor = south] {$v^2$};
    \draw (-2.4, -0.2) arc (-180 : -90: 0.6 and 0.3 ) node [anchor = north] {$v^1$};
    \draw[->] ([shift=(90:0.5)]-1.5, 0)  arc (90 : 170 :  0.9 and 0.3);
    \draw[->] ([shift=(-90:0.5)]-1.8, 0)  arc (-90 : 0 :  0.9 and 0.3);
    \draw ([shift=(-150 : 0.5)]0, 0) arc (-150 : 0: 0.5) node [anchor = east] {$\beta$};
    \draw[->] (0.5, 0)  arc (0 : 150 :  0.5);
    \draw[->] (0.7, 0)  arc (0 : -160 :  0.7);
  \end{tikzpicture}
  \label{jordan quiver double}
\end{figure}

\noindent In this case, the momentum map for the action of $\GL_n(\C)$ is given by:
\begin{center}
$\mu_{\bullet}:\begin{array}{ccc}
M_n(\C)^2 \oplus \C^n \oplus \Hom(\C^n,\C) & \to & M_n(\C)\\
(\alpha,\beta,v^1,v^2) & \mapsto & [\alpha,\beta] + v^1v^2
\end{array}$.\\
\end{center}
\vspace*{0.25cm}

\noindent Fix an integer $n \geq 1$. Let us denote $\Mbullet^{\hspace{0.3em}\lb}_{\hspace{0.3em}\theta}(n)$ the quiver variety over the quiver $\overline{Q_{\bullet^f}}$ with stability parameter $\theta \in \Q$ and deformation parameter $\lb \in \C$.\\

\begin{rmq}
\label{free action}
\begin{enumerate}[label=(\roman*)]
\item  The group $\GL(\C^n)$ acts freely on $\mu_{\bullet}^{-1}(0)^{1-ss}$ and on $\mu_{\bullet}^{-1}(1)$.
\item  If $\theta$ is nonzero, we have $\Mbullet_{\hspace{0.3em}\theta}(n) \simeq \QVJn$, thanks to \cite[Lemma $10.29$ \& Theorem $11.5$]{Kir06}. In addition, if $\lb$ is nonzero by rescaling we also have
\begin{equation*}
\Mbullet^{\hspace{0.3em}\lb}(n) \simeq \QVJndefu.
\end{equation*}
Furthermore, thanks to $(i)$ and the definition of semistability, it is clear that for each $\theta \in \Q^*$,  $\mu_{\bullet}^{-1}(1)={\mu_{\bullet}^{-1}(1)}^{\theta-ss}$.
We know \cite[Theorem $11.5$]{Kir06} that  ${\Mbullet\hspace{0.3em}(n):=\Mbullet^{\hspace{0.3em}0}_{\hspace{0.3em}0}(n) \simeq{(\C^2)}^n / \Sk_n}$. In the end, for each $(\theta, \lb) \in \mathbb{Q} \times \C$ we have\\
\begin{center}
$\QVJnb \simeq
\begin{cases}
\Mbullet^{\hspace{0.3em}1}(n) &\text{ if } \lb \neq 0,\\
\Mbullet_{\hspace{0.3em}1}(n) &\text{ if } \lb= 0 \text{ and } \theta \neq 0,\\
\Ctwo^n / \Sk_n &\text{ if } (\theta, \lb) =(0,0).\\
\end{cases} $
\end{center}
\end{enumerate}
\end{rmq}

\noindent Fix a couple $(\theta,\lb) \in \Q \times \C \setminus \{(0,0)\}$. As a result of Remark \ref{free action} and \cite[Lemma $10.29$]{Kir06}, if $\theta \neq 0$ we do not lose generality by assuming that $\theta >0$.
\begin{deff}
Let us define a $\GL_2(\C)$-action on $\mu_{\bullet}^{-1}(\lb)^{\theta-ss}$.
For ${g=\begin{pmatrix}a & b \\c &d\end{pmatrix}\in \GL_2(\C)}$ and $(\alpha,\beta,v^1,v^2) \in \mu_{\bullet}^{-1}(\lb)^{\theta-ss}$:
\begin{equation*}
g.(\alpha,\beta,v^1,v^2) := (a\alpha+ b\beta, c\alpha+d\beta, v^1,v^2).
\end{equation*}
\end{deff}
\begin{rmq}
It is easy to check that this action commutes with the $\G(n)=\GL(\C^n)$ action. The $\GL_2(\C)$-action thus descends to $\QVJnb$.
\end{rmq}

\subsection{Deconstruction}
\label{fixed to rep}

\noindent In this section, let us start with $\overline{(\alpha,\beta,v^1,v^2)} \in {\QVJnb}^{\Gamma}$.
Let $\tilde{A}_{\alpha,\beta}$ be $e_1 \otimes \alpha + e_2 \otimes \beta$  an element of $\Xstd \otimes \mathrm{End}(\C^n)$ and denote by $A_{\alpha,\beta}$ the image of $\tilde{A}_{\alpha,\beta}$ though this chain of canonical isomorphisms
\begin{equation*}
\Xstd \otimes \mathrm{End}(\C^n) \iso \Hom(\C,\Xstd) \otimes \Hom(\C^n,\C^n) \iso \Hom(\C^n, \Xstd \otimes \C^n)
\end{equation*}

\noindent When the couple $(\alpha,\beta)$ is defined by the context, we just write $A$ instead of $A_{\alpha,\beta}$.
For each $\gamma \in \Gamma$, there exists a $g_{\gamma} \in \GL(\C^n)$ such that
\begin{equation}
\label{equ 1}
\gamma.(\alpha, \beta, v^1,v^2) = g_{\gamma}.(\alpha, \beta,v^1,v^2).
\end{equation}
Note that thanks to Remark \ref{free action}, the element $g_{\gamma}$ is unique.
Consider now the following group morphism

\begin{equation*}
\sigma:
\begin{array}{ccc}
\Gamma & \to & \GL(\C^n)\\
\gamma & \mapsto & g_{\gamma}^{-1} \\
\end{array}.
\end{equation*}
This morphism equips $\C^n$ with a structure of $\Gamma$-module. Denote this $\Gamma$-module by $M^{\sigma}$.
We can reformulate $(\ref{equ 1})$ as follows
\begin{equation*}
\label{fix_eq}
\gamma.(\alpha, \beta, v^1,v^2) = \sigma(\gamma^{-1}).(\alpha, \beta,v^1,v^2), \quad \forall \gamma \in \Gamma.
\end{equation*}

\begin{lemme}
The morphism $A$ is $\Gamma$-equivariant.
\end{lemme}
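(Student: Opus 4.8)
The plan is to make the morphism $A$ completely explicit and then to verify the equivariance condition by a direct comparison with the defining relation $(\ref{equ 1})$. Unwinding the chain of canonical isomorphisms $\Xstd \otimes \End(\C^n) \iso \Hom(\C^n, \Xstd \otimes \C^n)$, the morphism $A = A_{\alpha,\beta}$ attached to $\tilde{A}_{\alpha,\beta} = e_1 \otimes \alpha + e_2 \otimes \beta$ is given by $A(w) = e_1 \otimes \alpha(w) + e_2 \otimes \beta(w)$ for all $w \in \C^n$. Here the source $\C^n$ is the $\Gamma$-module $M^{\sigma}$, while the target $\Xstd \otimes \C^n$ carries the tensor product action $\rho_{\mathrm{std}} \otimes \sigma$; thus $\Gamma$-equivariance of $A$ is the assertion that
\[
A \circ \sigma(\gamma) = (\rho_{\mathrm{std}}(\gamma) \otimes \sigma(\gamma)) \circ A \qquad \text{for all } \gamma \in \Gamma.
\]

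First I would write $\gamma = \begin{pmatrix} a_\gamma & b_\gamma \\ c_\gamma & d_\gamma \end{pmatrix}$, recall that $\sigma(\gamma) = g_\gamma^{-1}$, and use that $\rho_{\mathrm{std}}(\gamma)(e_1) = a_\gamma e_1 + c_\gamma e_2$ and $\rho_{\mathrm{std}}(\gamma)(e_2) = b_\gamma e_1 + d_\gamma e_2$. Evaluating both sides of the displayed identity on an arbitrary $w \in \C^n$ and comparing the $e_1$- and $e_2$-components of $\Xstd \otimes \C^n$ reduces the equivariance of $A$ to the two operator equations
\begin{align*}
\alpha g_\gamma^{-1} &= a_\gamma g_\gamma^{-1}\alpha + b_\gamma g_\gamma^{-1}\beta, \\
\beta g_\gamma^{-1} &= c_\gamma g_\gamma^{-1}\alpha + d_\gamma g_\gamma^{-1}\beta.
\end{align*}
Multiplying each on the left by $g_\gamma$ turns these into
\begin{align*}
g_\gamma \alpha g_\gamma^{-1} &= a_\gamma \alpha + b_\gamma \beta, \\
g_\gamma \beta g_\gamma^{-1} &= c_\gamma \alpha + d_\gamma \beta.
\end{align*}

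Then I would observe that these are precisely the first two components of the equality $(\ref{equ 1})$: indeed, the left-hand side of $(\ref{equ 1})$ is the $\GL_2(\C)$-action $\gamma.(\alpha,\beta,v^1,v^2) = (a_\gamma\alpha + b_\gamma\beta, c_\gamma\alpha + d_\gamma\beta, v^1, v^2)$, while the right-hand side is the $\GL(\C^n)$-action $g_\gamma.(\alpha,\beta,v^1,v^2) = (g_\gamma\alpha g_\gamma^{-1}, g_\gamma\beta g_\gamma^{-1}, g_\gamma v^1, v^2 g_\gamma^{-1})$, so the $\alpha$- and $\beta$-entries agree by the very definition of $g_\gamma$. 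This establishes the two operator equations and hence the equivariance of $A$. The only real obstacle is bookkeeping: one must pin down the explicit form of $A$ under the canonical isomorphisms and match the column convention for the matrix of $\rho_{\mathrm{std}}(\gamma)$ on $\Xstd$ with the placement conventions in the $\GL_2(\C)$-action on $(\alpha,\beta)$, after which the statement is a formal consequence of $(\ref{equ 1})$.
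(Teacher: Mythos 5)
Your proposal is correct and follows essentially the same route as the paper: both proofs make $A$ explicit as $A(w)=e_1\otimes\alpha(w)+e_2\otimes\beta(w)$, extract from the defining relation $\gamma.(\alpha,\beta,v^1,v^2)=g_{\gamma}.(\alpha,\beta,v^1,v^2)$ the two operator identities $g_{\gamma}\alpha g_{\gamma}^{-1}=a_\gamma\alpha+b_\gamma\beta$ and $g_{\gamma}\beta g_{\gamma}^{-1}=c_\gamma\alpha+d_\gamma\beta$, and match the $e_1$- and $e_2$-components of $\Xstd\otimes M^{\sigma}$. The only difference is direction of presentation (you reduce equivariance to the operator equations and then invoke the relation, while the paper starts from the relation and sums the two tensored equations), which is immaterial.
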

\begin{proof}
Take $x \in M^{\sigma}$ and $\gamma=\begin{pmatrix}a & b \\c &d\end{pmatrix} \in \Gamma$, using equation (\ref{equ 1}), we have the following equalities
\begin{center}
$\begin{cases}
\alpha(g_{\gamma}^{-1}(x))  = g_{\gamma}^{-1}(a\alpha(x)+b\beta(x))\\
\beta(g_{\gamma}^{-1}(x)) =   g_{\gamma}^{-1}(c\alpha(x)+d\beta(x)),
\end{cases}$
\end{center}
which then gives
\begin{center}
$\begin{cases}
e_1 \otimes \alpha(g_{\gamma}^{-1}(x))  = e_1 \otimes g_{\gamma}^{-1}(a\alpha(x)+b\beta(x))\\
e_2 \otimes \beta(g_{\gamma}^{-1}(x)) = e_2 \otimes  g_{\gamma}^{-1}(c\alpha(x)+d\beta(x)).
\end{cases}$
\end{center}
Summing these two equations provides exactly that $A$ is $\Gamma$-equivariant.
\end{proof}

\noindent Consider the morphism
\begin{equation*}
\mathrm{det}: \begin{array}{ccc}
\Xstd \otimes \Xstd & \to_{\Gamma} & X_{\chi_0}\\
\begin{pmatrix} r_1 \\ r_2 \end{pmatrix} \otimes \begin{pmatrix} s_1 \\ s_2 \end{pmatrix} & \mapsto & r_1s_2-r_2s_1 \\
\end{array}.
\end{equation*}

\noindent We now have everything. Let us define $\Delta^A: \Xstd \otimes M^{\sigma} \to M^{\sigma}$ as the composition of the two following maps
\begin{center}
\begin{tikzcd}
\Xstd \otimes M^{\sigma} \ar[r, "\mathrm{Id} \otimes A"] & \Xstd \otimes \Xstd \otimes M^{\sigma} \ar[r, "\mathrm{det} \otimes \mathrm{Id}"] & M^{\sigma}
\end{tikzcd}.
\end{center}
\vspace*{0.25cm}

\begin{lemme}
The morphism $\Delta^A$ is $\Gamma$-equivariant.
\end{lemme}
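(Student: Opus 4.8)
The plan is to exhibit $\Delta^A$ as a composition of $\Gamma$-equivariant morphisms, so that its $\Gamma$-equivariance follows formally from the fact that tensor products and composites of equivariant maps are equivariant. By definition $\Delta^A$ factors as $(\mathrm{det}\otimes \mathrm{Id}_{M^{\sigma}})\circ(\mathrm{Id}_{\Xstd}\otimes A)$. The preceding lemma already gives that $A:M^{\sigma}\to \Xstd\otimes M^{\sigma}$ is $\Gamma$-equivariant, and since $\mathrm{Id}_{\Xstd}$ is trivially $\Gamma$-equivariant, the tensor product $\mathrm{Id}_{\Xstd}\otimes A$ is $\Gamma$-equivariant as a map $\Xstd\otimes M^{\sigma}\to \Xstd\otimes\Xstd\otimes M^{\sigma}$.

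It therefore remains only to check that the second factor $\mathrm{det}\otimes \mathrm{Id}_{M^{\sigma}}$ is $\Gamma$-equivariant, which reduces to verifying that the determinant map $\mathrm{det}:\Xstd\otimes \Xstd\to X_{\chi_0}$ is itself $\Gamma$-equivariant. This is the only point that uses more than formalities, and it is precisely where the hypothesis $\Gamma\subset \SL_2(\C)$ enters: for $\gamma\in\Gamma$ and $r,s\in\Xstd$, one computes $\mathrm{det}(\gamma r\otimes \gamma s)=\det(\gamma)\,\mathrm{det}(r\otimes s)=\mathrm{det}(r\otimes s)$ since $\det(\gamma)=1$, which is exactly equivariance with $X_{\chi_0}$ carrying the trivial $\Gamma$-action. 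This is the same $\SL_2$-invariance of the determinant already invoked in Remark \ref{rmq_inv}.

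Combining these observations, $\Delta^A$ is a composition of two $\Gamma$-equivariant maps and hence $\Gamma$-equivariant. I do not expect any genuine obstacle: the substance of the statement is carried entirely by the previous lemma (equivariance of $A$) together with the $\SL_2$-invariance of the determinant, and everything else is the standard bookkeeping that composites and tensor products of equivariant morphisms remain equivariant.
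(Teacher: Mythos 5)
Your proof is correct and follows the same route as the paper, which simply observes that $\Delta^A$ is the composition of the $\Gamma$-equivariant maps $\mathrm{Id}_{\Xstd}\otimes A$ and $\mathrm{det}\otimes\mathrm{Id}_{M^{\sigma}}$. The only difference is that you spell out the verification that $\mathrm{det}$ is equivariant via $\det(\gamma)=1$ for $\gamma\in\SL_2(\C)$, a fact the paper builds into its notation $\to_{\Gamma}$ when defining $\mathrm{det}$.
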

\begin{proof}
This follows from the equivariance of $A$ and of $\mathrm{det}$.
\end{proof}

\noindent Finally, $v^1$ defines an element $Z_1^v$ of $\Hom_{\Gamma}(X_{\chi_0},M^{\sigma})$ since  $\forall \gamma \in \Gamma, v^1=\sigma(\gamma^{-1})v^1$. In the same way,  $v^2$ defines an element $Z_2^v$ of $\Hom_{\Gamma}(M^{\sigma},X_{\chi_0})$. Bringing everything together gives the following Proposition.\\
\begin{prop}
\label{constr rep}
For each $\overline{(\alpha,\beta,v^1,v^2)} \in {\QVJnb}^{\Gamma}$, $(\Delta^{A},Z_1^v,Z_2^v)$ is an element of $\RepQ_{M^{\sigma}}$.
\end{prop}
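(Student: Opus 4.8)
The plan is to unwind the definition of $\RepQ^{\Gamma}_{M^{\sigma}}=\RepQ^{\Gamma}_{M^{\sigma},\chi_0}$, namely the direct sum
$\Hom_{\Gamma}(\Xstd\otimes M^{\sigma},M^{\sigma})\oplus\Hom_{\Gamma}(X_{\chi_0},M^{\sigma})\oplus\Hom_{\Gamma}(M^{\sigma},X_{\chi_0})$,
and to check that each of the three constructed maps lands in the corresponding summand. Since the underlying vector spaces are visibly correct by construction, the only thing to verify is $\Gamma$-equivariance of each component, so the argument is a pure assembly of the facts established just above.

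For the first summand there is nothing left to prove: the preceding lemma already establishes that $\Delta^{A_{\alpha,\beta}}$ is $\Gamma$-equivariant, hence $\Delta^{A_{\alpha,\beta}}\in\Hom_{\Gamma}(\Xstd\otimes M^{\sigma},M^{\sigma})$. For the two framing maps I would read off the relevant equalities directly from the defining relation $(\ref{equ 1})$. Comparing the $v^1$- and $v^2$-components on both sides of $\gamma.(\alpha,\beta,v^1,v^2)=g_{\gamma}.(\alpha,\beta,v^1,v^2)$ — recalling that the $\GL_2(\C)$-action leaves $v^1$ and $v^2$ unchanged, whereas the $\GL(\C^n)$-action sends them to $g_{\gamma}v^1$ and $v^2g_{\gamma}^{-1}$ — gives $v^1=g_{\gamma}v^1$ and $v^2=v^2g_{\gamma}^{-1}$ for every $\gamma\in\Gamma$. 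Using $\sigma(\gamma)=g_{\gamma}^{-1}$, these become $\sigma(\gamma)v^1=v^1$ and $v^2\sigma(\gamma)=v^2$. As $\Gamma$ acts trivially on $X_{\chi_0}$, these two invariances are exactly the intertwining conditions defining $Z_1^v\colon X_{\chi_0}\to M^{\sigma}$ and $Z_2^v\colon M^{\sigma}\to X_{\chi_0}$ as $\Gamma$-equivariant maps, i.e. $Z_1^v\in\Hom_{\Gamma}(X_{\chi_0},M^{\sigma})$ and $Z_2^v\in\Hom_{\Gamma}(M^{\sigma},X_{\chi_0})$.

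Combining the three memberships then yields $(\Delta^{A_{\alpha,\beta}},Z_1^v,Z_2^v)\in\RepQ^{\Gamma}_{M^{\sigma}}$, which is the claim. I do not expect a genuine obstacle here, since the statement is essentially a repackaging of the equivariance results already obtained; the only points demanding a little care are the bookkeeping of the inverse in the convention $\sigma(\gamma)=g_{\gamma}^{-1}$ and the fact that $X_{\chi_0}$ carries the trivial $\Gamma$-action, which together make the $\Gamma$-invariance of $v^1$ and $v^2$ translate into equivariance of $Z_1^v$ and $Z_2^v$ exactly.
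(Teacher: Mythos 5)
Your proposal is correct and matches the paper's own argument, which likewise treats the proposition as the assembly of the preceding construction: the lemma giving $\Gamma$-equivariance of $\Delta^{A_{\alpha,\beta}}$, together with the invariances $v^1=\sigma(\gamma^{-1})v^1$ and $v^2\sigma(\gamma)=v^2$ read off from the defining relation $(\ref{equ 1})$, which (since $X_{\chi_0}$ is trivial) are exactly the equivariance of $Z_1^v$ and $Z_2^v$. No gap; your bookkeeping of the inverse in $\sigma(\gamma)=g_{\gamma}^{-1}$ agrees with the paper's convention.
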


\subsection{Reconstruction}

Let us turn it the other way around. Take a $\Gamma$-module $M$ of dimension $n$.
Consider the morphism
\begin{center}
$\mathrm{ted}: \begin{array}{ccc}
X_{\chi_0}& \to_{\Gamma} & \Xstd \otimes \Xstd\\
1 & \mapsto & e_2\otimes e_1 - e_1 \otimes e_2  \\
\end{array}$.
\end{center}
\vspace*{0.25cm}
Take $(\Delta,Z_1,Z_2) \in \RepQ_{M}$ and let $A_{\Delta}$ be the composition
\begin{center}
\begin{tikzcd}
M \ar[r, "\mathrm{ted}\otimes \mathrm{Id}"] & \Xstd \otimes \Xstd \otimes M \ar[r, "\mathrm{Id} \otimes \Delta"] & \Xstd \otimes M.
\end{tikzcd}
\end{center}
\vspace*{0.25cm}
\begin{lemme}
\label{lemme_fix}
The morphism $A_{\Delta}$ is $\Gamma$-equivariant.
\end{lemme}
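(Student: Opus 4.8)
The plan is to exhibit $A_{\Delta}$ as a composite of two $\Gamma$-equivariant maps, so that the statement follows from the fact that a composition of $\Gamma$-equivariant morphisms is again $\Gamma$-equivariant. By construction, after the canonical identification $M \simeq X_{\chi_0}\otimes M$, we have $A_{\Delta} = (\mathrm{Id}_{\Xstd}\otimes \Delta)\circ(\mathrm{ted}\otimes \mathrm{Id}_M)$, where $\Delta$ is applied to the rightmost copy of $\Xstd$ and to $M$. It therefore suffices to treat the two factors separately. This is precisely the mirror of the previous lemma, in which $\Delta^A$ was shown to be equivariant using the equivariance of $A$ and of $\det$; here the roles of $\det$ and $\mathrm{ted}$ are exchanged.

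First I would check that $\mathrm{ted}$ is $\Gamma$-equivariant, which is the only point carrying any content. Since $X_{\chi_0}$ is the trivial module, this amounts to showing that the vector $e_2\otimes e_1 - e_1\otimes e_2$ is fixed by $\Gamma$ acting diagonally on $\Xstd\otimes \Xstd$. Writing $\gamma=\begin{pmatrix} a & b\\ c & d\end{pmatrix}\in\Gamma$ and expanding $\gamma\cdot(e_2\otimes e_1 - e_1\otimes e_2)$ in the basis $(e_1,e_2)$, all the symmetric terms cancel and the antisymmetric combination is multiplied by $\det\gamma = ad-bc$; as $\Gamma\subseteq\SL_2(\C)$ we have $\det\gamma = 1$, so the vector is invariant. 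Concretely, the image of $\mathrm{ted}$ is the one-dimensional summand $\wedge^2\Xstd$ of $\Xstd\otimes\Xstd$, on which $\Gamma$ acts trivially precisely because $\Gamma\subseteq\SL_2(\C)$. Tensoring with $\mathrm{Id}_M$ then preserves equivariance, so $\mathrm{ted}\otimes \mathrm{Id}_M$ is $\Gamma$-equivariant.

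For the second factor, $\Delta\colon \Xstd\otimes M \to M$ is $\Gamma$-equivariant by the very definition of an object of $\mathbf{McK}(\Gamma)$, and tensoring an equivariant map on the left by $\mathrm{Id}_{\Xstd}$ (where $\Xstd$ carries its standard $\Gamma$-action) again yields an equivariant map; hence $\mathrm{Id}_{\Xstd}\otimes\Delta$ is $\Gamma$-equivariant. Composing the two equivariant factors gives that $A_{\Delta}$ is $\Gamma$-equivariant. I expect no genuine obstacle here: the argument is entirely dual to the earlier lemma on $\Delta^A$, and the sole computation—the $\det=1$ invariance of $\mathrm{ted}$—is immediate from the hypothesis $\Gamma\subseteq\SL_2(\C)$.
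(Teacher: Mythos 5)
Your proof is correct and follows exactly the paper's route: the paper's own proof is the one-line observation that $A_{\Delta}$ is a composition of the $\Gamma$-equivariant maps $\mathrm{ted}\otimes\mathrm{Id}_M$ and $\mathrm{Id}_{\Xstd}\otimes\Delta$. The only difference is that you additionally verify the equivariance of $\mathrm{ted}$ (the $\det\gamma=1$ computation), which the paper takes for granted by defining $\mathrm{ted}$ with the notation $\to_{\Gamma}$; this extra check is a welcome but minor elaboration of the same argument.
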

\begin{proof}
Since $\mathrm{ted}$ and $\Delta$ are $\Gamma$-equivariant, $A_{\Delta}$ also is $\Gamma$-equivariant.
\end{proof}

\noindent Define $(\alpha_\Delta,\beta_\Delta) \in \mathrm{End}(M)^2$ such that for all $m \in M$:
\begin{equation*}
A_{\Delta}(m)=e_1 \otimes \alpha_{\Delta}(m) + e_2 \otimes \beta_{\Delta}(m).
\end{equation*}
Let us denote $\RepQ^{\bullet}_M:=\mathrm{End}(M)^2 \oplus M \oplus \Hom(M,\C)$. We can now consider the following linear map
\begin{center}
$\tilde{\iota}_M: \begin{array}{ccc}
\RepQ_M & \to & \RepQ^{\bullet}_{M}\\
(\Delta,Z_1,Z_2) & \mapsto & (\alpha_{\Delta} ,\beta_{\Delta},Z_1 ,Z_2)  \\
\end{array}$.
\end{center}
\vspace*{0.25cm}

\begin{lemme}
\label{iota_equiv}
The map $\tilde{\iota}_M$ is $\Aut_{\Gamma}(M)$-equivariant.
\end{lemme}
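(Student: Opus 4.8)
The plan is to reduce the equivariance statement to an explicit conjugation formula for $\alpha_\Delta$ and $\beta_\Delta$. First I would pin down the two actions involved. On $\RepQ^\Gamma_M$ the $\Aut_\Gamma(M)$-action is the one defined above, namely $g.(\Delta,Z_1,Z_2)=(g.\Delta,gZ_1,Z_2g^{-1})$, where $(g.\Delta)_x=g\Delta_x g^{-1}$ for every $x\in\Xstd$. On the target $\RepQ^\bullet_M=\RepQ^{G_\bullet}_{M,\chi_0}$ the relevant action is the restriction to the subgroup $\Aut_\Gamma(M)\subset\GL(M)=\G(M)$ of the general $\G(V)$-action on the representation space of the double framed Jordan quiver; since the single loop $\alpha$ and its reverse $\beta$ both have source and target the unique framed vertex, this gives $g.(\alpha,\beta,v^1,v^2)=(g\alpha g^{-1},g\beta g^{-1},gv^1,v^2g^{-1})$.

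Next I would compute $\alpha_\Delta$ and $\beta_\Delta$ directly from the definition of $A_\Delta$. Unwinding $A_\Delta=(\id_{\Xstd}\otimes\Delta)\circ(\mathrm{ted}\otimes\id_M)$ and using $\mathrm{ted}(1)=e_2\otimes e_1-e_1\otimes e_2$ gives, for every $m\in M$,
\[
A_\Delta(m)=e_2\otimes\Delta_{e_1}(m)-e_1\otimes\Delta_{e_2}(m).
\]
Comparing with the defining relation $A_\Delta(m)=e_1\otimes\alpha_\Delta(m)+e_2\otimes\beta_\Delta(m)$ yields the clean formulas $\alpha_\Delta=-\Delta_{e_2}$ and $\beta_\Delta=\Delta_{e_1}$.

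From here the equivariance is immediate. Applying $(g.\Delta)_x=g\Delta_x g^{-1}$, I obtain $\alpha_{g.\Delta}=-(g.\Delta)_{e_2}=g(-\Delta_{e_2})g^{-1}=g\alpha_\Delta g^{-1}$ and likewise $\beta_{g.\Delta}=(g.\Delta)_{e_1}=g\Delta_{e_1}g^{-1}=g\beta_\Delta g^{-1}$. Since $\tilde{\iota}_M^\Gamma$ leaves the framing data $(Z_1,Z_2)$ untouched, while both actions transform it as $(Z_1,Z_2)\mapsto(gZ_1,Z_2g^{-1})$, I conclude
\[
\tilde{\iota}_M^\Gamma\bigl(g.(\Delta,Z_1,Z_2)\bigr)=(g\alpha_\Delta g^{-1},g\beta_\Delta g^{-1},gZ_1,Z_2g^{-1})=g.\tilde{\iota}_M^\Gamma(\Delta,Z_1,Z_2),
\]
which is exactly the asserted $\Aut_\Gamma(M)$-equivariance.

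I do not expect any genuine obstacle here: the argument is a short computation. The only points requiring care are the bookkeeping of the tensor-factor conventions and the sign coming from $\mathrm{ted}$ in the identification $\alpha_\Delta=-\Delta_{e_2}$, $\beta_\Delta=\Delta_{e_1}$, and the preliminary step of making the $\Aut_\Gamma(M)$-action on the Jordan side explicit as the restriction of the ambient $\GL(M)$-action.
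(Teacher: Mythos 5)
Your proof is correct and takes essentially the same approach as the paper: both arguments come down to the identities $\alpha_{\Delta}=-\Delta_{e_2}$, $\beta_{\Delta}=\Delta_{e_1}$ together with the conjugation formula $(g.\Delta)_{x}=g\Delta_{x}g^{-1}$ and the definition of the $\GL(M)$-action on $\RepQ^{\bullet}_{M}$. The only difference is cosmetic: you derive the sign identities explicitly from $\mathrm{ted}(1)=e_2\otimes e_1-e_1\otimes e_2$, whereas the paper simply quotes them.
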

\begin{proof}
Take $g \in \Aut_{\Gamma}(M)$ and $(\Delta,Z_1,Z_2) \in \RepQ_M$. By definition of the action of $\Aut_{\Gamma}(M)$ on $\RepQ_M$, we have that $(g.\Delta)_{e_2}=g\Delta_{e_2}g^{-1}$ and $(g.\Delta)_{e_1}=g\Delta_{e_1}g^{-1}$. Since $\alpha_{\Delta}=-\Delta_{e_2}$ and $\beta_{\Delta}=\Delta_{e_1}$, it is clear that, by definition of the $\GL(M)$-action on $\RepQ^{\bullet}_M$, the map $\tilde{\iota}_M$ is $\Aut_{\Gamma}(M)$-equivariant.
\end{proof}

\noindent Recall that if $\lb \in \Lambda$, we abuse the notation and denote by $\lb$ the associated element of $\Hom_{\Gamma}(M)$ (cf. Remark \ref{rmq_def}).\\

\begin{prop}
\label{reconstr works}
The map $\tilde{\iota}$ induces a closed embedding $\iota_M: \QV_{\theta}^{\lb\delta}(M) \to \QVJnb^{\Gamma}$.
\end{prop}
\begin{proof}
First let us explain why $\tilde{\iota}_M(\tQVTMdef) \subset \mu_{\bullet}^{-1}(\lb)$.\\
Take $(\Delta,Z_1,Z_2) \in \RepQ_M$. By construction $\alpha_{\Delta}$ and $\beta_{\Delta}$ are respectively the morphisms $-\Delta_{e_2}$ and $\Delta_{e_1}$.
We then have
\begin{equation*}
\alpha_{\Delta}\beta_{\Delta} -\beta_{\Delta}\alpha_{\Delta} + Z_1Z_2 = -\Delta_{e_2}\Delta_{e_1}+\Delta_{e_1}\Delta_{e_2} + Z_1Z_2.
\end{equation*}
This computation shows that if $(\Delta,Z_1,Z_2) \in \mu^{-1}(\lb\delta)$, then $(\alpha_{\Delta},\beta_{\Delta},Z_1,Z_2)\in \mu_{\bullet}^{-1}(\lb)$. Let $(\Delta, Z_1,Z_2)$ be an element of $\mu^{-1}(\lb\delta)^{\theta-ss}$. If $\lb\neq 0$ then Remark \ref{free action} gives directly that $\tilde{\iota}_M(\Delta,Z_1,Z_2)$ is $\theta$-semistable. Take $(\Delta,Z_1,Z_2) \in \mu^{-1}(0)^{\theta-ss}$. If $\theta=0$ there is nothing to show. If $\theta \neq 0$, we assume that $\theta >0$. The Lemma \ref{lemme_ss} implies that $Z_1 \neq 0$ and using \cite[Lemma $11.6$]{Kir06} we have that $\alpha_{\Delta}$ and $\beta_{\Delta}$ commute. To show that $(\alpha_{\Delta},\beta_{\Delta},Z_1,Z_2)$ is $\theta$-semistable, using \cite[Example $10.36$]{Kir06}, it is enough to show that $M=\C[\alpha_{\Delta},\beta_{\Delta}]\mathrm{Im}(Z_1)$.
Let $S=\C[\alpha,\beta]\mathrm{Im}(Z_1)$, the construction of $\alpha_{\Delta}$ and $\beta_{\Delta}$ and the fact that $\mathrm{Im}(Z_1)$ is a $\Gamma$-submodule of $M$, makes it clear that $S$ is a $\Gamma$-submodule of $M$. Moreover, if $x=x_1e_1+x_2e_2 \in X_{\mathrm{std}}$ and $s \in S$, then $\Delta(x\otimes s)=x_1\beta_{\Delta}(s)-x_2\alpha_{\Delta}(s)\in S$. So, using Lemma \ref{lemme_ss}, we have that $S=M$.
The morphism  $\tilde{\iota}_M$ is $\Aut_{\Gamma}(M)$-equivariant (Lemma \ref{iota_equiv}) . It then induces ${\iota_M: \QVTMb \to \QVJnb}$.
There remains to show that $\iota_{M}(\QVTMb) \subset \QVJnb^{\Gamma}$. Indeed, if $(\Delta,Z_1,Z_2) \in \RepQ_M$, ${\gamma=\begin{pmatrix} a & b \\ c & d \end{pmatrix} \in \Gamma}$ and $m \in M$, then Lemma \ref{lemme_fix} implies that
\begin{equation*}
\gamma.(e_1\otimes \alpha_{\Delta}(m)+e_2\otimes \beta_{\Delta}(m))=e_1\otimes \alpha_{\Delta}(\gamma.m) + e_2 \otimes \beta_{\Delta}(\gamma.m).
\end{equation*}
From there, we have
\begin{center}
$\begin{cases}
\gamma.(a\alpha_{\Delta}(m)+b\beta_{\Delta}(m))=\alpha_{\Delta}(\gamma.m)\\
\gamma.(c\alpha_{\Delta}(m)+d\beta_{\Delta}(m))=\beta_{\Delta}(\gamma.m)
\end{cases}
\Rightarrow
\begin{cases}
a\alpha_{\Delta}(m)+b\beta_{\Delta}(m)=\gamma^{-1}.\alpha_{\Delta}(\gamma.m)\\
c\alpha_{\Delta}(m)+d\beta_{\Delta}(m)=\gamma^{-1}.\beta_{\Delta}(\gamma.m).
\end{cases}
$
\end{center}
\end{proof}

\subsection{Synthesis}

Let us now connect the last two sections. Let $L_{\oando}$ be the following algebraic variety
\begin{equation*}
 \left\{(\alpha,\beta,v^1,v^2,\sigma) \in \mu_{\bullet}^{-1}(\lb)^{\theta-ss} \times \RepGn \text{ } |\text{ } \forall \gamma \in \Gamma, {\gamma}.(\alpha,\beta,v^1,v^2) = \sigma(\gamma^{-1}).(\alpha,\beta,v^1,v^2)\right\}
\end{equation*}

\noindent and define\\
\begin{itemize}
\item$\tilde{p}_{\oando}: L_{\oando} \to \mu_{\bullet}^{-1}(\lb)^{\theta-ss}$,
\item $p_{\oando}: L_{\oando} \to \QVJnb = \pi \circ \tilde{p}_{\oando}$,
\item $q_{\oando}: L_{\oando} \to \RepGn$.
\end{itemize}
\vspace*{0.25cm}

To clarify the situation, we give here a diagram clarifying the previously introduced morphisms:
\begin{center}
\begin{tikzcd}
& &L_{\oando} \ar[dl,"\tilde{p}_{\oando}"'] \ar[dr,"q_{\oando}"] \ar[ddll, bend right=40, "p_{\oando}"']&\\
&  \mu_{\bullet}^{-1}(\lb)^{\theta-ss} \ar[dl,"\pi"']&&\RepGn\\
\QVJnb & &&
\end{tikzcd}.
\end{center}
\vspace*{0.25cm}
Let the group $\GL_n(\C)$ act by conjugacy on $\RepGn$ and diagonally on $L_{\oando}$.
The maps $\tilde{p}_{\oando}$ and $q_{\oando}$ are $\GL_n(\C)$-equivariant maps.
Moreover, note that ${p_{\oando}(L_{\oando})=\QVJnb^{\Gamma}}$.

\begin{lemme}
\label{inj}
If $(\alpha,\beta,v^1,v^2,\sigma)$ and $(\alpha,\beta,v^1,v^2, \sigma')$ are both in $L_{\oando}$, then $\sigma=\sigma'$.
\end{lemme}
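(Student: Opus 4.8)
The plan is to reduce the statement to the freeness of the $\GL_n(\C)$-action established in Lemma \ref{free action}, exactly as in the uniqueness remark following equation (\ref{equ 1}). First I would fix the point $(\alpha,\beta,v^1,v^2) \in \tQVJnb$ common to the two tuples and record, for each $\gamma \in \Gamma$, the two defining relations
\begin{equation*}
\gamma.(\alpha,\beta,v^1,v^2) = \sigma(\gamma^{-1}).(\alpha,\beta,v^1,v^2) = \sigma'(\gamma^{-1}).(\alpha,\beta,v^1,v^2),
\end{equation*}
where the outer action is the $\GL_2(\C)$-action through $\Gamma \subset \SL_2(\C)$ and the two inner ones are the $\GL_n(\C)$-action.

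Next I would rewrite the equality of the two right-hand sides as
\begin{equation*}
\left(\sigma'(\gamma^{-1})^{-1}\sigma(\gamma^{-1})\right).(\alpha,\beta,v^1,v^2) = (\alpha,\beta,v^1,v^2),
\end{equation*}
so that the element $\sigma'(\gamma^{-1})^{-1}\sigma(\gamma^{-1}) \in \GL_n(\C)$ stabilises the point. Before invoking freeness I must check that $\tQVJnb$ is genuinely one of the spaces on which $\GL_n(\C)$ acts freely: since $(\theta,\lb)\neq(0,0)$, Remark \ref{rmq_cases} identifies $\tQVJnb$ with $\tQVJndefu$ when $\lb\neq 0$ and with $\tQVJn$ when $\lb=0$ and $\theta\neq 0$, and Lemma \ref{free action} provides freeness in both cases.

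Then freeness forces $\sigma'(\gamma^{-1})^{-1}\sigma(\gamma^{-1}) = \mathrm{id}$, that is $\sigma(\gamma^{-1}) = \sigma'(\gamma^{-1})$, for every $\gamma \in \Gamma$. Finally, since $\gamma \mapsto \gamma^{-1}$ is a bijection of $\Gamma$, this yields $\sigma = \sigma'$. I do not anticipate any real obstacle beyond the bookkeeping of the two distinct group actions and the case distinction on $(\theta,\lb)$ needed to apply Lemma \ref{free action}; all the mathematical content is carried by the freeness of the $\GL_n(\C)$-action, which is precisely why this statement is the natural companion to the uniqueness of $g_\gamma$ noted after equation (\ref{equ 1}).
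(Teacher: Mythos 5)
Your proof is correct and follows essentially the same route as the paper: the paper's own argument likewise equates $\sigma(\gamma).(\alpha,\beta,v^1,v^2)=\sigma'(\gamma).(\alpha,\beta,v^1,v^2)$ and then invokes Lemma \ref{free action} together with Remark \ref{rmq_cases} to conclude $\sigma=\sigma'$. Your version merely makes explicit the stabiliser element $\sigma'(\gamma^{-1})^{-1}\sigma(\gamma^{-1})$ and the case distinction on $(\theta,\lb)$, which the paper leaves implicit.
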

\begin{proof}
From the assumption, we have $\forall \gamma \in \Gamma$ $\sigma(\gamma).(\alpha,\beta,v^1,v^2)=\sigma'(\gamma).(\alpha,\beta,v^1,v^2)$. We can then use Remark \ref{free action} to deduce that $\sigma=\sigma'$.
\end{proof}

\noindent If $\sigma \in \RepGn$ recall that we denote by $M^{\sigma}:=\C^n$ the $\Gamma$-module induced by $\sigma$.\\

\begin{deff}
Let $\sigma \in \RepGn$ be such that $q_{\oando}^{-1}(\sigma) \neq \emptyset$ and define
\begin{center}
$\tilde{\kappa}_{\sigma}: \begin{array}{ccc}
q_{\oando}^{-1}(\sigma) & \to & \RepQ_{M^{\sigma}}\\
(\alpha,\beta,v^1,v^2,\sigma) & \mapsto & (\Delta^{A},Z_1^v,Z_2^v)  \\
\end{array}$
\end{center}
where $(\Delta^{A},Z_1^v,Z_2^v)$ is as in Proposition \ref{constr rep}.\\
\end{deff}

\begin{prop}
\label{rep works}
If $\sigma \in \RepGn$ such that $q_{\oando}^{-1}(\sigma) \neq \emptyset$, then $\tilde{\kappa}_{\sigma}\left(q_{\oando}^{-1}(\sigma)\right) = \mu^{-1}(\lb\delta)^{\theta-ss}$.
\end{prop}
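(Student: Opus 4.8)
The plan is to prove the two inclusions defining the set equality, using the reconstruction map $\tilde{\iota}_{M^{\sigma}}^{\Gamma}$ of the previous subsection as the inverse of $\tilde{\kappa}_{\oando,\sigma}^{\Gamma}$. First I would record the explicit formulas tying the two constructions together: by the definition of $\Delta^{A_{\alpha,\beta}}$ one has $\Delta^{A}_{e_1}=\beta$ and $\Delta^{A}_{e_2}=-\alpha$, while $\tilde{\iota}_{M^{\sigma}}^{\Gamma}$ reads off $\alpha_{\Delta}=-\Delta_{e_2}$ and $\beta_{\Delta}=\Delta_{e_1}$. Consequently $\Delta^{A_{\alpha_{\Delta},\beta_{\Delta}}}=\Delta$ and $(\alpha_{\Delta^{A}},\beta_{\Delta^{A}})=(\alpha,\beta)$, so the two assignments are mutually inverse whenever both sides make sense; this is what will let me recover a preimage in the harder inclusion.

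For $\subseteq$, fix $(\alpha,\beta,v^1,v^2,\sigma)\in q_{\oando}^{-1}(\sigma)$. By Proposition \ref{constr rep} the image $(\Delta^{A},Z_1^v,Z_2^v)=\tilde{\kappa}_{\oando,\sigma}^{\Gamma}(\alpha,\beta,v^1,v^2,\sigma)$ is a genuine element of $\RepQ_{M^{\sigma}}^{\Gamma}$, and the remaining task is to place it in $\mu_{\Gamma}^{-1}(\lb\delta^{\Gamma})^{\boldsymbol{\theta}-ss}$. Using the formulas above,
\begin{equation*}
\mu_{\Gamma}(\Delta^{A},Z_1^v,Z_2^v)=\Delta^{A}_{e_1}\Delta^{A}_{e_2}-\Delta^{A}_{e_2}\Delta^{A}_{e_1}+Z_1^vZ_2^v=[\alpha,\beta]+v^1v^2,
\end{equation*}
which equals $\lb\,\mathrm{id}_{\C^n}$ because $(\alpha,\beta,v^1,v^2)\in\tQVJnb$; and $\lb\,\mathrm{id}_{M^{\sigma}}$ is exactly the element of $\End_{\Gamma}(M^{\sigma})$ attached to the deformation parameter $\lb\delta^{\Gamma}$, since its $\chi$-coefficient $\lb\delta^{\Gamma}_{\chi}$ cancels the normalisation $1/\delta^{\Gamma}_{\chi}$. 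For $\boldsymbol{\theta}$-semistability: if $\lb\neq 0$ it is automatic by Lemma \ref{free action} and Remark \ref{rmq_cases}; if $\lb=0$ and $\theta>0$, then $(\alpha,\beta,v^1,v^2)\in\tQVJnb$ forces $\C[\alpha,\beta]\,\mathrm{Im}(v^1)=\C^n$, so any $\Gamma$-submodule $M'$ of $M^{\sigma}$ stable under $\Delta^{A}$ (equivalently under $\alpha$ and $\beta$, since $\Delta^{A}_{e_1}=\beta$, $\Delta^{A}_{e_2}=-\alpha$ span) and containing $\mathrm{Im}(Z_1^v)=\mathrm{Im}(v^1)$ must equal $M^{\sigma}$, whence $(\Delta^{A},Z_1^v,Z_2^v)$ is $\boldsymbol{\theta}$-semistable by the submodule criterion of Lemma \ref{lemme_ss}.

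For $\supseteq$, take $(\Delta,Z_1,Z_2)\in\tQV_{\boldsymbol{\theta},\lb\delta^{\Gamma}}^{\Gamma}(M^{\sigma})$ and set $(\alpha,\beta,v^1,v^2):=\tilde{\iota}_{M^{\sigma}}^{\Gamma}(\Delta,Z_1,Z_2)=(\alpha_{\Delta},\beta_{\Delta},Z_1,Z_2)$. Proposition \ref{reconstr works} (its momentum-map and semistability computations) already shows $(\alpha,\beta,v^1,v^2)\in\tQVJnb$, and the equivariance identities established at the end of that proof, namely $a\alpha_{\Delta}+b\beta_{\Delta}=\sigma(\gamma^{-1})\alpha_{\Delta}\sigma(\gamma)$ and $c\alpha_{\Delta}+d\beta_{\Delta}=\sigma(\gamma^{-1})\beta_{\Delta}\sigma(\gamma)$ for $\gamma=\left(\begin{smallmatrix}a&b\\c&d\end{smallmatrix}\right)$, together with $\sigma(\gamma^{-1})Z_1=Z_1$ and $Z_2\sigma(\gamma)=Z_2$ coming from $\Gamma$-equivariance of $Z_1,Z_2$, are precisely the fixing condition $\gamma.(\alpha,\beta,v^1,v^2)=\sigma(\gamma^{-1}).(\alpha,\beta,v^1,v^2)$. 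Hence $(\alpha,\beta,v^1,v^2,\sigma)\in q_{\oando}^{-1}(\sigma)$, and by the inverse relationship $\tilde{\kappa}_{\oando,\sigma}^{\Gamma}(\alpha,\beta,v^1,v^2,\sigma)=(\Delta^{A_{\alpha_{\Delta},\beta_{\Delta}}},Z_1,Z_2)=(\Delta,Z_1,Z_2)$, so $(\Delta,Z_1,Z_2)$ lies in the image. This proves both inclusions.

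The main obstacle is bookkeeping rather than conceptual: one must reconcile the sign conventions of the two momentum maps and, above all, check that the deformation parameter $\lb\delta^{\Gamma}$ genuinely corresponds to the scalar $\lb\,\mathrm{id}$ on $M^{\sigma}$, for otherwise the two momentum-map fibres would not be carried into one another. One must also be careful that the semistability notions on the McKay side (through $\Aut_{\Gamma}(M^{\sigma})$) and on the Jordan side (through $\GL(\C^n)$) are compared via the $\Gamma$-submodule criterion of Lemma \ref{lemme_ss} rather than through the functors $F_{y^{\Omega}},G_{\tilde{y}^{\Omega}}$, which are not the maps at play in $\tilde{\iota}_{M^{\sigma}}^{\Gamma}$ and $\tilde{\kappa}_{\oando,\sigma}^{\Gamma}$.
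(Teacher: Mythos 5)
Your proof is correct and follows essentially the same route as the paper's: the same signed momentum-map computation $-\beta\alpha+\alpha\beta+v^1v^2$ together with the identification of the deformation parameter $\lb\delta^{\Gamma}$ with $\lb\,\mathrm{id}_{M^{\sigma}}$ for the forward inclusion, the submodule criterion of Lemma \ref{lemme_ss} plus cyclicity $\C[\alpha,\beta]v^1=\C^n$ for semistability, and the $\Gamma$-equivariance computation (through $\tilde{\iota}_{M^{\sigma}}^{\Gamma}$ and the mutual-inverse relation $\Delta^{A_{\alpha_{\Delta},\beta_{\Delta}}}=\Delta$) for the reverse inclusion. If anything, you spell out two points the paper leaves implicit — the cancellation of $\lb\delta^{\Gamma}_{\chi}$ against the normalisation $\tfrac{1}{\delta^{\Gamma}_{\chi}}$, and the inverse relationship needed to conclude that $(\Delta,Z_1,Z_2)$ is actually hit — while your case split ($\lb\neq 0$ automatic versus $\lb=0$, $\theta>0$) is the same in substance as the paper's ($\theta=0$ versus $\theta\neq 0$ with the reduction to $\lb=0$ via Remark \ref{rmq_cases}).
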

\begin{proof}
Take $(\alpha,\beta,v^1,v^2,\sigma) \in q_{\oando}^{-1}(\sigma)$, by construction $\Delta^A_{e_1}=\beta$ and $\Delta^A_{e_2}=-\alpha$. We then have
\begin{equation*}
\Delta^A_{e_1}\Delta^A_{e_2}-\Delta^A_{e_2}\Delta^A_{e_1} + Z_1^vZ_2^v = -\beta\alpha + \alpha\beta + v^1v^2.
\end{equation*}
This proves that if $(\alpha,\beta,v^1,v^2,\sigma)\in q^{-1}_{\oando}(\sigma)$ then  $\tilde{\kappa}_{\sigma}(\alpha,\beta,v^1,v^2,\sigma) \in \mu^{-1}(\lb\delta)$.

If $\theta=0$ it is clear that $\tilde{\kappa}_{\sigma}^{\Gamma}(\alpha,\beta,v^1,v^2,\sigma) \in  \mu^{-1}(\lb\delta)^{\theta-ss}$. If now $\theta \neq 0$, thanks to Remark \ref{free action}, we do not lose generality by assuming that $\lb=0$. Moreover, recall that in the case $\theta \neq 0$, we can reduce to the case where $\theta >0$ so that  Lemma \ref{lemme_ss} can be used to show that $(\Delta^A,Z_1^v,Z_2^v)$ is $\theta$-semistable. Take $M'$ a $\Gamma$-submodule of $M^{\sigma}$ such that $\Delta^{A}(\Xstd \otimes M') \subset M'$ and $\mathrm{Im}(Z_1^v) \subset M'$. Let us show that $M^{\sigma} \subset M'$. Since the element $(\alpha,\beta,v^1,v^2)$ is in  $\mu_{\bullet}^{-1}(0)^{\theta-ss}$, we have that $[\alpha,\beta]=0$ and $\C[\alpha,\beta]v^1=M^{\sigma}$. Take $m \in M^{\sigma}$, we then have a polynomial $P \in \C[x,y]$ such that $P(\alpha,\beta)v^1=m$. Since $\alpha = -\Delta^A_{e_2}$ and $\beta= \Delta^A_{e_1}$, $m=P(-\Delta_{e_2}^A,\Delta_{e_1}^A)v$. By hypothesis $v^1 \in M'$ and using the stability of $M'$ by $\Delta$ we can conclude that $m \in M'$.

To finish, let us prove the other inclusion i.e. if $(\Delta,Z_1,Z_2) \in \mu^{-1}(\lb\delta)^{\theta-ss}$ we need to show that $(-\Delta_{e_2},\Delta_{e_1},Z_1,Z_2,\sigma) \in L_{\oando}$. Take $\gamma \in \Gamma$, then a quick computation gives
\begin{align*}
\gamma.(-\Delta_{e_2},\Delta_{e_1},Z_1,Z_2)
&= (-\Delta_{\gamma^{-1}e_2}, \Delta_{\gamma^{-1}e_1},Z_1,Z_2)\\
&= \left(-\sigma(\gamma)^{-1}\Delta_{e_2}\sigma(\gamma), \sigma(\gamma)^{-1}\Delta_{e_1}\sigma(\gamma),\sigma(\gamma)^{-1}Z_1,Z_2\sigma(\gamma)\right).
\end{align*}
The last equality comes from the $\Gamma$-equivariance of $\Delta$, $Z_1$ and $Z_2$.
\end{proof}

\noindent Consider now $\kappa_{\sigma}: q_{\oando}^{-1}(\sigma) \to \mathcal{M}^{\lb\delta}_{\theta}(M^{\sigma})$ which is onto thanks to Proposition \ref{rep works}.
\begin{prop}
 \label{prop_commutes}
For each $\sigma \in \mathrm{Rep}_{n}$ such that $q_{\oando}^{-1}(\sigma) \neq \emptyset$, the following diagram commutes
\begin{center}
\begin{tikzcd}[column sep=huge]
 q_{\oando}^{-1}(\sigma) \ar[d, "p_{\oando}"] \ar[r, "\kappa_{\sigma}"] & \mathcal{M}_{\theta}^{\lb\delta}(M^{\sigma}) \ar[dl, "\iota_{M^{\sigma}}"]\\
 \QVJnb^{\Gamma}
 \end{tikzcd}.
 \end{center}
 \end{prop}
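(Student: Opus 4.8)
The plan is to prove that the triangle commutes, i.e. that $p_{\oando} = \iota_{\boldsymbol{\theta},\lb,M^{\sigma}}^{\Gamma} \circ \kappa_{\oando,\sigma}^{\Gamma}$ as maps on $q_{\oando}^{-1}(\sigma)$. Since all three maps are obtained by descending explicit maps of representation spaces to GIT quotients, I would first check the statement upstairs, on representatives, and only then descend. The upstairs statement is that the reconstruction map $\tilde{\iota}_{M^{\sigma}}^{\Gamma}$ exactly undoes the deconstruction map $\tilde{\kappa}_{\oando,\sigma}^{\Gamma}$, up to forgetting the auxiliary datum $\sigma$; that is, $\tilde{\iota}_{M^{\sigma}}^{\Gamma} \circ \tilde{\kappa}_{\oando,\sigma}^{\Gamma}$ equals the restriction of $\tilde{p}_{\oando}$ to $q_{\oando}^{-1}(\sigma)$.

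For this core computation, fix $(\alpha,\beta,v^1,v^2,\sigma) \in q_{\oando}^{-1}(\sigma)$. The deconstruction produces $\tilde{\kappa}_{\oando,\sigma}^{\Gamma}(\alpha,\beta,v^1,v^2,\sigma) = (\Delta^{A_{\alpha,\beta}}, Z_1^v, Z_2^v)$, and, as already recorded in the proof of Proposition \ref{rep works}, the sign conventions in the maps $\mathrm{det}$ and $\mathrm{ted}$ give $\Delta^{A_{\alpha,\beta}}_{e_1} = \beta$ and $\Delta^{A_{\alpha,\beta}}_{e_2} = -\alpha$, while $Z_1^v, Z_2^v$ are, under the identification $\Hom_{\Gamma}(X_{\chi_0}, M^{\sigma}) \simeq (M^{\sigma})^{\Gamma}$, literally $v^1$ and $v^2$. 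Applying $\tilde{\iota}_{M^{\sigma}}^{\Gamma}$ and using the formulas $\alpha_{\Delta} = -\Delta_{e_2}$, $\beta_{\Delta} = \Delta_{e_1}$ from Lemma \ref{iota_equiv}, I obtain
\begin{equation*}
\tilde{\iota}_{M^{\sigma}}^{\Gamma}\bigl(\Delta^{A_{\alpha,\beta}}, Z_1^v, Z_2^v\bigr) = \bigl(-\Delta^{A_{\alpha,\beta}}_{e_2}, \Delta^{A_{\alpha,\beta}}_{e_1}, v^1, v^2\bigr) = (\alpha, \beta, v^1, v^2) = \tilde{p}_{\oando}(\alpha,\beta,v^1,v^2,\sigma).
\end{equation*}

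To descend, recall that $\kappa_{\oando,\sigma}^{\Gamma}$ is $\tilde{\kappa}_{\oando,\sigma}^{\Gamma}$ followed by the quotient map $\tQV_{\boldsymbol{\theta},\lb\delta^{\Gamma}}^{\Gamma}(M^{\sigma}) \to \QVG_{\boldsymbol{\theta},\lb\delta^{\Gamma}}(M^{\sigma})$, and that by Proposition \ref{reconstr works} the map $\iota_{\boldsymbol{\theta},\lb,M^{\sigma}}^{\Gamma}$ is the one induced by $\tilde{\iota}_{M^{\sigma}}^{\Gamma}$ on GIT quotients, meaning that applying the $\Aut_{\Gamma}(M^{\sigma})$-quotient and then $\iota_{\boldsymbol{\theta},\lb,M^{\sigma}}^{\Gamma}$ gives the same result as applying $\tilde{\iota}_{M^{\sigma}}^{\Gamma}$ and then the $\GL(\C^n)$-quotient $\pi$. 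Evaluating at a point of $q_{\oando}^{-1}(\sigma)$ and chaining these with the upstairs identity yields $\iota_{\boldsymbol{\theta},\lb,M^{\sigma}}^{\Gamma}(\kappa_{\oando,\sigma}^{\Gamma}(\alpha,\beta,v^1,v^2,\sigma)) = \pi(\tilde{p}_{\oando}(\alpha,\beta,v^1,v^2,\sigma)) = p_{\oando}(\alpha,\beta,v^1,v^2,\sigma)$, which is the claim.

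The computation is elementary, and the only thing requiring attention is the interplay of the two group actions, $\Aut_{\Gamma}(M^{\sigma})$ on the source quiver variety and $\GL(\C^n)$ on the target. This causes no trouble because $\Aut_{\Gamma}(M^{\sigma}) \subset \GL(\C^n)$ and $\tilde{\iota}_{M^{\sigma}}^{\Gamma}$ is $\Aut_{\Gamma}(M^{\sigma})$-equivariant by Lemma \ref{iota_equiv} --- this equivariance being exactly what makes $\iota_{\boldsymbol{\theta},\lb,M^{\sigma}}^{\Gamma}$ well defined on orbits in the first place --- so the equality of the two descended maps follows at once from the pointwise identity above.
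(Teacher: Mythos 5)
Your proposal is correct and follows essentially the same route as the paper: the paper's proof also just computes $\tilde{\iota}^{\Gamma}_{\boldsymbol{\theta},\lb,M^{\sigma}}\bigl(\tilde{\kappa}^{\Gamma}_{\oando,\sigma}(\alpha,\beta,v^1,v^2,\sigma)\bigr)=(\alpha,\beta,v^1,v^2)$ on representatives and concludes that the diagram commutes. Your write-up merely makes explicit the sign bookkeeping ($\Delta^{A}_{e_1}=\beta$, $\Delta^{A}_{e_2}=-\alpha$ versus $\alpha_{\Delta}=-\Delta_{e_2}$, $\beta_{\Delta}=\Delta_{e_1}$) and the equivariance-based descent to the GIT quotients, which the paper leaves implicit in the phrase ``by construction.''
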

 \begin{proof}
Take $(\alpha, \beta,v^1,v^2, \sigma) \in q_{\oando}^{-1}(\sigma)$. By construction one has
\begin{equation*}
\tilde{\iota}_{M^{\sigma}}\left(\tilde{\kappa}_{\sigma}(\alpha,\beta,v^1,v^2,\sigma)\right)=(\alpha,\beta,v^1,v^2)
\end{equation*}
 which shows that the diagram commutes.
\end{proof}

\noindent For $\sigma \in \mathrm{Rep}_{n}$ and for $\chi \in \Irr$, let $d^{\sigma}_{\chi} := \mathrm{dim}\left(\Hom_{\Gamma}(X_{\chi},M^{\sigma})\right)$.
Denote the character map by
\begin{equation*}
\mathrm{char}: \begin{array}{ccc}
\RepGn & \to & \Delta_{\Gamma}\\
\sigma & \mapsto & d^{\sigma}
\end{array}.
\end{equation*}

\noindent Let $\tilde{\mathcal{A}}_n := \mathrm{char}(\RepGn)$ which is just a combinatorial way to encode $\mathrm{Char}_{n}$ the set of all characters associated to $n$-dimensional representations of $\Gamma$. Moreover for ${d \in \tilde{\mathcal{A}}_n}$, denote by $\mathcal{C}_d:=\mathrm{char}^{-1}(d)$ the set of all $n$-dimensional representations that have character $\sum_{\chi \in \Irr}{d_{\chi}\chi}$. Note that if we take $\sigma \in \mathcal{C}_d$, then $\mathcal{C}_d$ is just the $\mathrm{GL}_n(\C)$-conjugacy class of $\sigma$. With this notation, we have
\begin{equation*}
\RepGn = \coprod_{d \in \tilde{\mathcal{A}}_n}{\mathcal{C}_d}.
\end{equation*}

\begin{rmq}
The set $\tilde{\mathcal{A}}_n$ is equal to $\left\{d \in \Delta^+ \mid |d|=n\right\}$.
\end{rmq}

\noindent Denote by $\mathcal{A}_{n, \oando}:=\left\{d \in \tilde{\mathcal{A}}_n | q_{\oando}^{-1}(\mathcal{C}_d) \neq \emptyset\right\}$. For $d \in \mathcal{A}_{n, \oando}$, recall that we can associate an $\Irr$-graded vector space $V^d:=\bigoplus_{\chi \in \Irr}{\C^{d_{\chi}}}$. Denote by ${M^d:=\bigoplus_{\chi \in \Irr}{V^d_{\chi}\otimes X_{\chi}}}$ the $\Gamma$-module associated to $d$.

\begin{deff}
Take $d \in \mathcal{A}_{n, \oando}$. Let us define the variety $\mathcal{M}_d:=p_{\oando}\left(q_{\oando}^{-1}(\mathcal{C}_d)\right)$.
\end{deff}

\begin{rmq}
Note that $\mathcal{M}_d=p_{\oando}\left(q_{\oando}^{-1}(\sigma)\right)$ for any $\sigma \in \mathcal{C}_d$ since $\tilde{p}_{\oando}$ and $q_{\oando}$ are $\GL_n(\C)$-equivariant.\\
\end{rmq}

\begin{thm}
\label{decomp}
Let $(\theta,\lb) \in \mathbb{Q} \times \C \setminus \{(0,0)\}$.
For each integer $n$ and each finite subgroup $\Gamma$ of $\SL_2(\C)$, we have the following decomposition into irreducible components
\begin{equation*}
\QVJnb^{\Gamma} = \coprod_{d \in \mathcal{A}_{n, \oando}}{\QV_d}.
\end{equation*}
\end{thm}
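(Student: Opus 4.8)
The plan is to exhibit the stated decomposition as the partition of $\QVJnb^{\Gamma}$ into its connected components. Since $(\theta,\lb)\neq(0,0)$, Remark \ref{rmq_cases} identifies $\QVJnb$ with one of the smooth varieties $\mathcal{M}_{1}^{\bullet}(n)$ or $\mathcal{M}_{\boldsymbol{1}}^{\bullet}(n)$; in particular $\QVJnb$ is smooth, hence so is its $\Gamma$-fixed locus $\QVJnb^{\Gamma}$, the fixed locus of a finite group acting on a smooth variety being smooth. For a smooth variety the connected components coincide with the irreducible components and form a partition into pairwise disjoint clopen pieces. It therefore suffices to prove: (i) $\QVJnb^{\Gamma}=\bigcup_{d\in\mathcal{A}^n_{\Gamma,\oando}}\mathcal{M}_d^{\Gamma}$; (ii) each $\mathcal{M}_d^{\Gamma}$ is open and closed; and (iii) each $\mathcal{M}_d^{\Gamma}$ is nonempty and irreducible.

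First I would establish the covering (i). Since $p_{\oando}(L^{\Gamma}_{\oando})=\QVJnb^{\Gamma}$ and the decomposition $\RepGn=\coprod_{d\in\tilde{\mathcal{A}}^n_{\Gamma}}\mathcal{C}_d$ gives $L^{\Gamma}_{\oando}=\coprod_d q_{\oando}^{-1}(\mathcal{C}_d)$, applying $p_{\oando}$ yields $\QVJnb^{\Gamma}=\bigcup_d\mathcal{M}_d^{\Gamma}$, where the union runs over those $d$ with $q_{\oando}^{-1}(\mathcal{C}_d)\neq\emptyset$, that is, over $\mathcal{A}^n_{\Gamma,\oando}$. For (iii), Proposition \ref{prop_commutes} together with the surjectivity of $\kappa_{\oando,\sigma}^{\Gamma}$ from Proposition \ref{rep works} gives, for any $\sigma\in\mathcal{C}_d$, that $\mathcal{M}_d^{\Gamma}=\iota_{\boldsymbol{\theta},\lb,M^{\sigma}}^{\Gamma}\big(\QVG_{\boldsymbol{\theta},\lb\delta^{\Gamma}}(M^{\sigma})\big)$. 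The McKay quiver variety on the right is nonempty (being $\kappa_{\oando,\sigma}^{\Gamma}(q_{\oando}^{-1}(\sigma))$ with $q_{\oando}^{-1}(\sigma)\neq\emptyset$) and irreducible by Proposition \ref{prop_irr}, the hypothesis $(\theta,\lb)\neq(0,0)$ placing us in one of its two cases (note that $\lb\delta^{\Gamma}\neq0$ whenever $\lb\neq0$, since $\delta^{\Gamma}_{\chi}=\dim X_{\chi}>0$). As the image of an irreducible variety under a morphism is irreducible, $\mathcal{M}_d^{\Gamma}$ is irreducible.

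The decisive point is (ii), and I stress that disjointness alone would \emph{not} suffice: a smooth irreducible variety can be partitioned into disjoint irreducible locally closed pieces that are not its components, so I must genuinely show each $\mathcal{M}_d^{\Gamma}$ is clopen. I would do this by building a locally constant \emph{type} map $\tau\colon\QVJnb^{\Gamma}\to\tilde{\mathcal{A}}^n_{\Gamma}$ whose fibres are exactly the $\mathcal{M}_d^{\Gamma}$. By Lemma \ref{free action} the $\GL_n(\C)$-action on $\tQVJnb$ is free, so $\tQVJnb\to\QVJnb$ is a geometric quotient by a free action, and over $\QVJnb^{\Gamma}$ its division morphism assigns to a lift $\tilde{x}$ of a fixed point and each $\gamma\in\Gamma$ the unique $g_{\gamma}(\tilde{x})\in\GL_n(\C)$ with $\gamma.\tilde{x}=g_{\gamma}(\tilde{x}).\tilde{x}$ (uniqueness by Lemma \ref{free action}, independence of the induced $\sigma$ from the lift up to conjugacy by Lemma \ref{inj}). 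This produces an algebraic family of representations $\sigma_{\tilde{x}}\colon\Gamma\to\GL_n(\C)$; since a finite group has only finitely many characters of a given degree, the assignment $\tilde{x}\mapsto\mathrm{char}_{\Gamma}(\sigma_{\tilde{x}})$ is locally constant and descends to $\tau$. By the uniqueness of $\sigma$ one then checks $\tau^{-1}(d)=\mathcal{M}_d^{\Gamma}$, so each $\mathcal{M}_d^{\Gamma}$ is clopen and the union is automatically disjoint.

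Combining (i)–(iii), $\QVJnb^{\Gamma}$ is the disjoint union of the nonempty, irreducible, clopen subsets $\mathcal{M}_d^{\Gamma}$ for $d\in\mathcal{A}^n_{\Gamma,\oando}$; by smoothness these are precisely its irreducible components, which is the asserted decomposition. The main obstacle I anticipate is making the type map rigorous: showing that $\tilde{x}\mapsto\sigma_{\tilde{x}}$ is genuinely algebraic through the division morphism of the free quotient, and that the rigidity of finite-group representations forces its character to be locally constant. This is exactly the ingredient that upgrades a disjoint covering by irreducibles into a decomposition into components, and it is where the special nature of the fixed-point problem — rather than any formal property of quiver varieties — enters.
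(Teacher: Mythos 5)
Your proposal is correct and takes essentially the same approach as the paper: the paper likewise inverts $\tilde{p}_{\oando}$ over $\pi^{-1}\left(\QVJnb^{\Gamma}\right)$ (using Lemma \ref{free action}, Lemma \ref{inj} and smoothness of the fixed locus) to obtain an algebraic type map $\tau_{\oando}:\QVJnb^{\Gamma}\to\RepGn\sslash\GL_n(\C)$, identifies $\QV_d^{\Gamma}=\tau_{\oando}^{-1}(\mathcal{C}_d)$, and gets irreducibility from $\iota_{\boldsymbol{\theta},\lb,M^{\sigma}}^{\Gamma}$ together with Proposition \ref{prop_irr}. Your local constancy of the character map is the same ingredient the paper invokes as closedness of the classes $\mathcal{C}_d$ (semisimplicity of representations of the finite group $\Gamma$), so the two arguments coincide in substance.
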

\begin{proof}
Take $d \in \mathcal{A}_{n, \oando}$ and let us first show that $\QV_d$ is an irreducible and closed set of $\QVJnb^{\Gamma}$.
Take $\sigma \in \mathcal{C}_d$. Proposition \ref{prop_irr} gives that $\mathcal{M}_{\theta}^{\lb\delta}(M^{\sigma})$ is irreducible and since $\QV_d=\iota_{M^{\sigma}}\left(\mathcal{M}_{\theta}^{\lb\delta}(M^{\sigma})\right)$, we have that $\QV_d$ is irreducible.

To show that $\QV_d$ is a closed set of $\QVJnb^{\Gamma}$, note that $L_{\oando}$ is connected and that $\tilde{p}_{\oando}$ is injective thanks to Lemma \ref{inj}. The image of $p_{\oando}$ being $\QVJnb^{\Gamma}$, this implies that
\begin{equation*}
\tilde{p}_{\oando}\left(L_{\oando}\right)= \pi^{-1}\left(\QVJnb^{\Gamma}\right).
\end{equation*}
The group $\GL_n(\C)$ acts freely on $\mu_{\bullet}^{-1}(\lb)^{\theta-ss}$ (Remark \ref{free action}), which gives that $\pi$ is a smooth morphism. Moreover, the group $\Gamma$ being a finite group, we know that $\QVJnb^{\Gamma}$ is smooth and in particular $\pi^{-1}\left(\QVJnb^{\Gamma}\right)$ is smooth. Thus, the morphism $\tilde{p}_{\oando}$ becomes an isomorphism of algebraic varieties between $L_{\oando}$ and $\pi^{-1}\left(\QVJnb^{\Gamma}\right)$.
Let us denote by ${p'_{\oando}:\pi^{-1}\left(\QVJnb^{\Gamma}\right) \to L_{\oando}}$ its inverse. Consider now
\begin{center}
$\tilde{\tau}_{\oando}:=q_{\oando} \circ p'_{\oando} : \pi^{-1}\left(\QVJnb^{\Gamma}\right) \to \RepGn$.
\end{center}
Since $\tilde{\tau}_{\oando}$ is $\GL_n(\C)$-equivariant, we have $\tau_{\oando}: \QVJnb^{\Gamma} \to \RepGn\sslash\GL_n(\C)$.\\
Here is the big picture
\vspace{0.5cm}
\begin{center}
$\begin{tikzcd}[row sep=large, column sep=large]
L_{\oando} \ar[r, "q_{\oando}"] \ar[d, shift right, "\tilde{p}_{\oando}"'] \ar[dr, "p_{\oando}"'] & \RepGn \ar[r, two heads] & \RepGn\sslash \mathrm{GL}_n(\C)\\
\pi^{-1}\left(\QVJnb^{\Gamma}\right) \ar[u,shift right, "p'_{\oando}"'] \ar[r,"\pi"']& \QVJnb^{\Gamma} \ar[ur, "\tau_{\oando}"']
\end{tikzcd}$.
\end{center}
\vspace{0.5cm}

\noindent It is then clear that
\begin{equation*}
\tau_{\oando}^{-1}(\mathcal{C}_d) = p_{\oando}\left(q_{\oando}^{-1}(\mathcal{C}_d)\right) = \QV_d
\end{equation*}
which proves that $\QV_d$ is a closed set. Indeed $\mathcal{C}_d$ is closed because all representations of $\Gamma$ are semisimple since $\Gamma$ is a finite group.\\
Finally, we have to show that $\QVJnb^{\Gamma}=\bigcup_{d \in \mathcal{A}_{n,\oando}}{\QV_d}$ which comes for free
 \begin{equation*}
 \QVJnb^{\Gamma} = \bigcup_{d \in \mathcal{A}_{n,\oando}}{\tau_{\oando}^{-1}(\mathcal{C}_d}) = \bigcup_{d \in \mathcal{A}_{n,\oando}}{\QV_d}.
 \end{equation*}
\end{proof}

\begin{prop}
\label{prop_iso_quiver_var}
For each $d \in \mathcal{A}_{n, \oando}$ and for each $\sigma \in \mathcal{C}_d$
\begin{equation*}
\iota_{M^{\sigma}}:  \QV_{\theta}^{\lb\delta}(M^{\sigma}) \to \QV_d
\end{equation*}
is an isomorphism of algebraic varieties.
\end{prop}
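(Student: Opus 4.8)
The plan is to present both the source $\QVG_{\boldsymbol{\theta},\lb\delta^{\Gamma}}(M^{\sigma})$ and the target $\QV_d^{\Gamma}$ of $\iota_{\boldsymbol{\theta},\lb,M^{\sigma}}^{\Gamma}$ as quotients of the \emph{same} variety, namely the fiber $q_{\oando}^{-1}(\sigma)$, by the \emph{same} group $\Aut_{\Gamma}(M^{\sigma})$, and then to invoke the universal property of categorical quotients to conclude that $\iota_{\boldsymbol{\theta},\lb,M^{\sigma}}^{\Gamma}$ is the canonical isomorphism between the two presentations. First I would observe that $\Aut_{\Gamma}(M^{\sigma})$ is exactly the stabilizer of $\sigma$ for the conjugation action of $\GL_n(\C)$ on $\RepGn$, since a linear automorphism of $\C^n$ commutes with $\sigma(\Gamma)$ if and only if it is $\Gamma$-equivariant on $M^{\sigma}$. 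Because $q_{\oando}$ is $\GL_n(\C)$-equivariant and $\mathcal{C}_d = \GL_n(\C)\cdot\sigma \cong \GL_n(\C)/\Aut_{\Gamma}(M^{\sigma})$ is a single orbit, the orbit map identifies $q_{\oando}^{-1}(\mathcal{C}_d)$ with the associated bundle $\GL_n(\C)\times_{\Aut_{\Gamma}(M^{\sigma})}q_{\oando}^{-1}(\sigma)$.

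Next I would descend the deconstruction map. By Propositions \ref{rep works} and \ref{reconstr works}, the deconstruction $\tilde{\kappa}_{\oando,\sigma}^{\Gamma}$ and the reconstruction $\tilde{\iota}_{M^{\sigma}}^{\Gamma}$ are mutually inverse linear isomorphisms exchanging $q_{\oando}^{-1}(\sigma)$ (viewed inside $\tQVJnb$ via the injection $\tilde{p}_{\oando}$ of Lemma \ref{inj}) with $\tQV_{\boldsymbol{\theta},\lb\delta^{\Gamma}}^{\Gamma}(M^{\sigma})$; by Lemma \ref{iota_equiv} together with the corresponding equivariance of $\tilde{\kappa}_{\oando,\sigma}^{\Gamma}$, this isomorphism is $\Aut_{\Gamma}(M^{\sigma})$-equivariant. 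Consequently $\kappa_{\oando,\sigma}^{\Gamma}$ exhibits $\QVG_{\boldsymbol{\theta},\lb\delta^{\Gamma}}(M^{\sigma}) = \tQV_{\boldsymbol{\theta},\lb\delta^{\Gamma}}^{\Gamma}(M^{\sigma})\sslash_{\boldsymbol{\theta}}\Aut_{\Gamma}(M^{\sigma})$ as the GIT quotient $q_{\oando}^{-1}(\sigma)\sslash_{\boldsymbol{\theta}}\Aut_{\Gamma}(M^{\sigma})$.

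Then I would identify the target as the same quotient. Since the $\GL_n(\C)$-action on $\tQVJnb$ is free (Lemma \ref{free action}), $\pi$ is a geometric quotient, and by the proof of Theorem \ref{decomp} the map $\tilde{p}_{\oando}$ restricts to a $\GL_n(\C)$-equivariant isomorphism $q_{\oando}^{-1}(\mathcal{C}_d)\iso\pi^{-1}(\QV_d^{\Gamma})$. Hence $p_{\oando}$ realizes $\QV_d^{\Gamma}$ as the geometric quotient $q_{\oando}^{-1}(\mathcal{C}_d)/\GL_n(\C)$, which through the associated-bundle description equals $q_{\oando}^{-1}(\sigma)/\Aut_{\Gamma}(M^{\sigma})$; freeness of the action forces this geometric quotient to coincide with the GIT quotient $\sslash_{\boldsymbol{\theta}}$.

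Finally I would conclude. Both $\kappa_{\oando,\sigma}^{\Gamma}$ and the restriction of $p_{\oando}$ to $q_{\oando}^{-1}(\sigma)$ are quotient maps for the same free $\Aut_{\Gamma}(M^{\sigma})$-action, hence categorical quotients, so there is a unique isomorphism between $\QVG_{\boldsymbol{\theta},\lb\delta^{\Gamma}}(M^{\sigma})$ and $\QV_d^{\Gamma}$ commuting with them; Proposition \ref{prop_commutes} identifies this isomorphism with $\iota_{\boldsymbol{\theta},\lb,M^{\sigma}}^{\Gamma}$, which is therefore an isomorphism of algebraic varieties. I expect the main obstacle to be the passage from the $\GL_n(\C)$-quotient defining $\QV_d^{\Gamma}$ to the $\Aut_{\Gamma}(M^{\sigma})$-quotient of the slice $q_{\oando}^{-1}(\sigma)$ via the associated-bundle structure, and the verification that the geometric and GIT quotients agree there because the actions are free; once this is in place, the universal property of quotients yields the result immediately.
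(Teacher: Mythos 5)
Your proposal is correct in substance, but it takes a genuinely different route from the paper. The paper's own proof is short and pointwise-geometric: it notes that $\iota_{\boldsymbol{\theta},\lb,M^{\sigma}}^{\Gamma}$ is injective and, by Propositions \ref{rep works} and \ref{prop_commutes}, surjects onto $\QV_d^{\Gamma}$, so it is a bijective morphism; then $\QV_d^{\Gamma}$ is smooth and open, being an irreducible component of the smooth variety $\QVJnb^{\Gamma}$ with finitely many components (Theorem \ref{decomp}), while the source is smooth and connected (Theorem \ref{link_var} and Proposition \ref{prop_irr}); one concludes by the standard characteristic-zero fact that a bijective morphism onto a smooth (hence normal) variety is an isomorphism. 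You instead present both sides as quotients of the slice $q_{\oando}^{-1}(\sigma)$ by the stabilizer $\Aut_{\Gamma}(M^{\sigma})$ and appeal to uniqueness of categorical quotients, with Proposition \ref{prop_commutes} identifying the induced isomorphism with $\iota_{\boldsymbol{\theta},\lb,M^{\sigma}}^{\Gamma}$. What your route buys: it avoids smoothness of the fixed-point locus and the bijective-onto-normal principle entirely, it exhibits the isomorphism as the canonical one coming from a universal property, and it would survive in settings where the fixed locus is not known to be smooth. What the paper's route buys: brevity, since all the needed inputs (smoothness, irreducibility, the image computation) are already on the table.

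Two technical points in your argument deserve to be shored up. First, the associated-bundle identification $q_{\oando}^{-1}(\mathcal{C}_d) \simeq \GL_n(\C)\times_{\Aut_{\Gamma}(M^{\sigma})}q_{\oando}^{-1}(\sigma)$ as varieties (not just equivariant bijections) needs local sections of $\GL_n(\C) \to \GL_n(\C)/\Aut_{\Gamma}(M^{\sigma})$; this does hold here because, by Schur's Lemma, $\Aut_{\Gamma}(M^{\sigma})$ is a product of general linear groups, hence a special group, so the quotient map is Zariski-locally trivial. Second, the clause ``freeness of the action forces this geometric quotient to coincide with the GIT quotient'' is not correct as stated: freeness does not imply that orbits are closed in the semistable locus, which is what GIT-equals-geometric normally requires. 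Your final paragraph, however, already contains the correct repair: every point of $q_{\oando}^{-1}(\sigma)$ is $\boldsymbol{\theta}$-semistable (Proposition \ref{rep works}), so the GIT quotient $q_{\oando}^{-1}(\sigma)\sslash_{\boldsymbol{\theta}}\Aut_{\Gamma}(M^{\sigma})$ is a good, hence categorical, quotient; the geometric quotient furnished by $p_{\oando}$ is also categorical; and two categorical quotients of the same action are related by a unique isomorphism compatible with the quotient maps, which Proposition \ref{prop_commutes} identifies with $\iota_{\boldsymbol{\theta},\lb,M^{\sigma}}^{\Gamma}$. With these two repairs made explicit, your proof is complete.
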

\begin{proof}
The morphism $\iota_{M^{\sigma}}$ is injective. Using Proposition \ref{rep works} and \ref{prop_commutes}, we have
\begin{equation*}
\QV_d=\iota_{M^{\sigma}}\left(\QV_{\theta}^{\lb\delta}(M^{\sigma})\right).
\end{equation*}
Moreover, the variety $\QV_d$ is smooth. Indeed, thanks to Theorem \ref{decomp}, we have that $\QV_d$ is an irreducible component of the smooth variety $\QVJnb^{\Gamma}$. Furthermore, since $\QVJnb^{\Gamma}$ has a finite number of irreducible components, this implies that
 $\QV_d$ is open.
Finally, using Theorem \ref{link_var} and Proposition \ref{prop_irr}, we have that $\QV_{\theta}^{\lb\delta}(M^{\sigma})$ is connected. Summing it all up, we can now conclude that $\iota_{M^{\sigma}}$ is an isomorphism of algebraic varieties.
\end{proof}

\section{Irreducible components of ${\Hk_n^{\Gamma} \text{ and } \Ck_n^{\Gamma}}$}
\label{chap_irr}

In the previous section, we have studied the $\Gamma$-fixed point locus of the Jordan quiver variety $\QVJnb$ when $(\theta,\lb) \neq (0,0)$. In this section we will first use Theorem \ref{decomp} to retrieve the irreducible components of the Hilbert scheme of $n$ points in $\C^2$ and of the $n^{\text{th}}$ Calogero-Moser space. In a second part, a combinatorial description of the indexing set $\mathcal{A}_{n,\oando}$ will be given. To do so, we will work with the McKay realization (Remark \ref{rmq_mckay}) and a new statistic on the root lattice $\mathcal{Q}$ will be introduced.

\subsection{Hilbert scheme of $n$ points in $\C^2$}
On the one side, denote the Hilbert scheme of $n$ points on $\C^2$ by $\Hk_n$ which is
\begin{equation*}
\left\{I \subset \C[x,y]| I \text{ is an ideal and } \mathrm{dim}(\C[x,y]/I)=n\right\}.
\end{equation*}
John Fogarty showed \cite[Proposition $2.2$ \& Theorem $2.9$]{Fo68} that $\Hk_n$ is a smooth connected $2n$ dimensional algebraic variety. The algebraic group $\GL_2(\C)$ acts naturally on $\C^2$, thus on the coordinate ring $\C[x,y]$. This action induces a $\GL_2(\C)$-action on $\Hk_n$.
Our fixed group $\Gamma$ being a finite subgroup of $\SL_2(\C)$, $\Hk_n^{\Gamma}$, the locus of $\Gamma$-fixed elements of $\Hk_n$ is also a smooth algebraic variety. For $I \in \Hk_n^{\Gamma}$, the $n$-dimensional vector space $\C[x,y]/I$ is a representation of $\Gamma$. For $\chi \in \CharGn$, let us denote by  $\Hk_n^{\chi}:=\{ I \in \Hk_n^{\Gamma} | \mathrm{Tr}(\C[x,y]/I)=\chi\}$. On the level of sets, we have the following decomposition
\begin{equation*}
\Hk_n^{\Gamma}=\coprod_{\chi \in \CharGn}{\Hk_n^{\chi}}.
\end{equation*}
In what follows, we will show that this decomposition lifts to the category of algebraic varieties and that every nonempty $\Hk_n^{\Gamma,\chi}$ is an irreducible component of $\Hk_n^{\Gamma}$.
Let us construct a morphism between $\Hk_n$ and $\QVJn$. Given an ideal $I \in \Hk_n$ consider the following two linear maps
\begin{multicols}{2}

\begin{itemize}
\item $m_x^I:\begin{array}{ccc}
\C[x,y]/I &\to& \C[x,y]/I\\
\bar{P} &\mapsto& \overline{xP}\\
\end{array}$\\
\item $m_y^I:\begin{array}{ccc}
\C[x,y]/I &\to& \C[x,y]/I\\
\bar{P} &\mapsto& \overline{yP}\\
\end{array}$\\
\end{itemize}
\end{multicols}
\noindent Denote  $v^I=\bar{1} \in \C[x,y]/I$ and define
\begin{equation*}
H:
\begin{array}{ccc}
\Hk_n & \to & \QVJn\\
I & \mapsto & \overline{(m_x^{I},m_y^I, v^I, 0)}  \\
\end{array}.
\end{equation*}
\begin{prop}
\label{GL2-equiv}
The morphism $H$ is a $\GL_2(\C)$-equivariant isomorphism.
\end{prop}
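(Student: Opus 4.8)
The plan is to exhibit $H$ as a morphism, verify its $\GL_2(\C)$-equivariance, prove it is a set-theoretic bijection, and finally upgrade the bijection to an isomorphism of varieties. First I would check that $H$ is a well-defined morphism into $\QVJn$. For a fixed $I \in \Hk_n$ the operators $m_x^I$ and $m_y^I$ commute because $xy = yx$ in $\C[x,y]$, and $v^I = \bar 1$ is a cyclic vector, since $P(m_x^I,m_y^I)\,v^I = \overline{P}$ so that $\C[m_x^I,m_y^I]\,v^I = \C[x,y]/I$. By the explicit description of $\tQVJn$ recalled in the proof of Lemma \ref{free action} (via \cite[Example 10.36]{Kir06}), the tuple $(m_x^I,m_y^I,v^I,0)$ lies in $\tQVJn$, so its image under the quotient map $\pi$ is a well-defined point of $\QVJn$. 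That the assignment is algebraic is seen on the universal family: the universal quotient $\mathcal{O}_{\Hk_n\times\C^2}/\mathcal{I}$ is locally free of rank $n$ over $\Hk_n$, multiplication by $x$ and $y$ together with the section $1$ define, over any trivializing open subset, a map to $\tQVJn$, and these maps glue to a morphism $H$ after passing to the $\GL_n(\C)$-quotient, the choice of trivialization being exactly the $\GL_n(\C)$-ambiguity killed by $\pi$.

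For $\GL_2(\C)$-equivariance I would compute directly. A linear change of coordinates $g=\begin{pmatrix}a&b\\c&d\end{pmatrix}\in\GL_2(\C)$ on $\C^2$ acts on $\C[x,y]$ and sends $I$ to $g\cdot I$; the operators of multiplication on $\C[x,y]/(g\cdot I)$ by the two coordinate functions are precisely the linear combinations $a\,m_x^I+b\,m_y^I$ and $c\,m_x^I+d\,m_y^I$ of the old ones (read off from how $g$ expresses the coordinate functions), while the class of $1$ is unchanged. This is exactly the $\GL_2(\C)$-action on $\tQVJnb$ defined earlier in Section \ref{chap_decomp}, up to conjugation by the base-change isomorphism $\C[x,y]/I \iso \C[x,y]/(g\cdot I)$, which is absorbed in the $\GL_n(\C)$-quotient. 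Hence $H(g\cdot I)=g\cdot H(I)$.

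For bijectivity the key observation is that $I$ is recovered from the triple $(m_x^I,m_y^I,v^I)$ as the annihilator $I=\{P\in\C[x,y]\mid P(m_x^I,m_y^I)\,v^I=0\}$, again because $P(m_x^I,m_y^I)\,v^I=\overline{P}$. Since this annihilator is invariant under simultaneous $\GL_n(\C)$-conjugation of $(\alpha,\beta,v^1)$, two $\GL_n(\C)$-conjugate tuples determine the same ideal, which yields injectivity of $H$. For surjectivity, given $(\alpha,\beta,v^1,0)\in\tQVJn$ I set $I:=\ker\bigl(\C[x,y]\to\C^n,\ P\mapsto P(\alpha,\beta)v^1\bigr)$; this is an ideal, the cyclicity $\C[\alpha,\beta]\,v^1=\C^n$ forces the evaluation map to be surjective, so $\dim\C[x,y]/I=n$ and $I\in\Hk_n$, and by construction $H(I)=\overline{(\alpha,\beta,v^1,0)}$.

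The remaining and genuinely nontrivial point, which I expect to be the main obstacle, is that a bijective morphism need not be an isomorphism, so bijectivity must be upgraded. Both source and target are smooth of the same dimension $2n$: $\Hk_n$ by Fogarty's theorem \cite[Proposition $2.2$ \& Theorem $2.9$]{Fo68} quoted above, and $\QVJn$ by the general smoothness of quiver varieties together with the free $\GL_n(\C)$-action of Lemma \ref{free action}, which makes $\pi$ a principal $\GL_n(\C)$-bundle and forces $\dim\QVJn=2n$. Over $\C$, a bijective morphism between smooth varieties of equal dimension is generically \'etale, hence birational, and by Zariski's main theorem a bijective birational morphism onto a normal variety is an isomorphism. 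The cleaner alternative I would prefer is to construct the inverse $\overline{(\alpha,\beta,v^1,0)}\mapsto\ker\bigl(P\mapsto P(\alpha,\beta)v^1\bigr)$ as an honest morphism, realizing the ideal as the kernel of the tautological evaluation map over $\QVJn$ (truncated to bounded degree, so that it varies algebraically) and invoking the universal property of $\Hk_n$; the $\GL_2(\C)$-equivariance of this inverse then follows formally from that of $H$.
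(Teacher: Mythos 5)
Your proof is correct, but it does substantially more work than the paper does, and the division of labor is different. The paper's proof of this proposition consists of exactly two things: a citation of \cite[Theorem 11.5]{Kir06} for the fact that $H$ is an isomorphism of algebraic varieties, and the equivariance computation, which it performs just as you do --- via the base-change isomorphism $\nu:\C[x,y]/I \iso \C[x,y]/(g.I)$, $\bar P \mapsto \overline{g.P}$, and the identities $\nu^{-1}\circ m_x^{g.I}\circ\nu = a\,m_x^I + b\,m_y^I$ and $\nu^{-1}\circ m_y^{g.I}\circ\nu = c\,m_x^I + d\,m_y^I$; your remark that the base-change isomorphism is ``absorbed in the $\GL_n(\C)$-quotient'' is precisely the paper's point. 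Everything else you do --- well-definedness and algebraicity of $H$ via the universal family, injectivity by recovering $I$ as the annihilator $\{P \in \C[x,y] \mid P(\alpha,\beta)v^1=0\}$, surjectivity by taking the kernel of the evaluation map, and the upgrade from bijective morphism to isomorphism by generic \'etaleness plus Zariski's main theorem (or by building the inverse from a truncated tautological evaluation map) --- amounts to a re-proof of the cited theorem of Kirillov; it is the standard argument and it is sound, including the ZMT step, since both varieties are smooth, connected of dimension $2n$ and we are in characteristic zero. So your route buys self-containedness at the cost of length, while the paper's buys brevity by outsourcing the hard part. One step you should make explicit: in your injectivity argument, passing from $H(I)=H(J)$ to ``the two tuples are $\GL_n(\C)$-conjugate'' uses that for the stability parameter $\boldsymbol{1}$ semistability coincides with stability and the action is free (Lemma \ref{free action}), so the GIT quotient is a geometric quotient whose points are honest $\GL_n(\C)$-orbits; without that remark the deduction from equality in the quotient to conjugacy of representatives is unjustified.
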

\begin{proof}
The fact that $H$ is an isomorphism is proven in \cite[Theorem 11.5]{Kir06}. Let ${g=\begin{pmatrix}a & b \\c &d\end{pmatrix}}$ be an element of $\GL_2(\C)$ and $I \in \Hk_n$. Then
\begin{equation*}
\nu :\begin{array}{ccc}
\C[x,y]/I & \to & \C[x,y]/g.I\\
\bar{P} & \mapsto & \overline{g.P}  \\
\end{array}
\end{equation*}
is an isomorphism.
It is clear that $\nu^{-1}(v^{g.I})=v^I$. A simple computation gives
\begin{equation*}
\nu^{-1}\circ m_x^{g.I}\circ \nu=am_x^I+bm_y^I
\end{equation*}
\begin{equation*}
\nu^{-1}\circ m_y^{g.I}\circ\nu=cm_x^I+dm_y^I
\end{equation*}
which shows that $H(g.I)=g.H(I)$.\\
\end{proof}
\begin{rmq}
If $d \in \tilde{A}_{n}$, note that $\mathcal{M}_d$ is equal to $H(\Hk_n^{\xi_d})$ where ${\xi_d:=\sum_{\chi \in \Irr}{d_{\chi}\chi}}$ is an element of  $\CharGn$. We also have $|d|=k_{\xi_d}=n$.\\
\end{rmq}

\begin{cor}
\label{cor_hn}
For each integer $n$ and each finite subgroup $\Gamma$ of $\SL_2(\C)$, we can decompose the $\Gamma$-fixed point locus of the Hilbert scheme of $n$ points in $\C^2$ into irreducible components
\begin{equation*}
\Hk_n^{\Gamma} = \coprod_{d \in \mathcal{A}_{n,1,0}}{\Hk_n^{\xi_{d}}}.
\end{equation*}
\end{cor}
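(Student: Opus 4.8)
The plan is to deduce this corollary directly from Theorem \ref{decomp} by specializing the parameters and then transporting the resulting decomposition along the isomorphism $H$ of Proposition \ref{GL2-equiv}. Concretely, I would apply Theorem \ref{decomp} with the parameters $(\theta,\lb)=(1,0) \in \Q \times \C \setminus \{(0,0)\}$. With this choice Remark \ref{rmq_cases} identifies $\QVJnb$ with $\QVJn$, and the indexing set $\mathcal{A}^n_{\Gamma,\oando}$ becomes $\mathcal{A}^n_{\Gamma,\boldsymbol{1}}$ (the deformation parameter being zero and hence dropped from the notation, following the convention fixed after Theorem \ref{link_var}). Theorem \ref{decomp} then reads $\QVJn^{\Gamma} = \coprod_{d \in \mathcal{A}^n_{\Gamma,\boldsymbol{1}}}{\QV_d^{\Gamma}}$, a decomposition into irreducible components.

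Next I would observe that $H$ carries this decomposition to $\Hk_n$. Since $\Gamma$ is a subgroup of $\SL_2(\C) \subset \GL_2(\C)$ and $H$ is $\GL_2(\C)$-equivariant by Proposition \ref{GL2-equiv}, the isomorphism $H$ is in particular $\Gamma$-equivariant, where the $\Gamma$-action on $\Hk_n$ is the restriction of the natural $\GL_2(\C)$-action and the $\Gamma$-action on $\QVJn$ is the one descending from $\tQVJnb$. Hence $H$ restricts to an isomorphism of algebraic varieties $H:\Hk_n^{\Gamma} \iso \QVJn^{\Gamma}$ on the $\Gamma$-fixed loci. The Remark preceding the statement records precisely the compatibility needed on the individual pieces, namely $H(\Hk_n^{\Gamma,\xi_d}) = \mathcal{M}_d^{\Gamma} = \QV_d^{\Gamma}$ for every $d \in \tilde{\mathcal{A}}^n_{\Gamma}$, together with $|d|_{\Gamma}=k_{\xi_d}=n$, so the two indexings agree.

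Finally, applying $H^{-1}$ to the decomposition of Theorem \ref{decomp} gives
\begin{equation*}
\Hk_n^{\Gamma} = H^{-1}\left(\QVJn^{\Gamma}\right) = \coprod_{d \in \mathcal{A}^n_{\Gamma,\boldsymbol{1}}}{H^{-1}\left(\QV_d^{\Gamma}\right)} = \coprod_{d \in \mathcal{A}^n_{\Gamma,\boldsymbol{1}}}{\Hk_n^{\Gamma,\xi_d}}.
\end{equation*}
Because $H$ is an isomorphism of varieties it preserves irreducible components, so the $\Hk_n^{\Gamma,\xi_d}$ for $d \in \mathcal{A}^n_{\Gamma,\boldsymbol{1}}$ are exactly the irreducible components of $\Hk_n^{\Gamma}$, which is the assertion.

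I do not expect a genuine obstacle here: all the substantive work has already been carried out in Theorem \ref{decomp} and Proposition \ref{GL2-equiv}. The only points requiring care are bookkeeping ones, namely verifying that restricting the $\GL_2(\C)$-equivariance of $H$ to $\Gamma$ produces the correct $\Gamma$-actions on both sides, and checking that the specialized indexing set $\mathcal{A}^n_{\Gamma,\boldsymbol{1}}$ coincides with the set of those $d$ for which $\Hk_n^{\Gamma,\xi_d}$ is nonempty; both follow at once from the Remark and from the definition of $\mathcal{A}^n_{\Gamma,\oando}$ as the set of $d$ with $q_{\oando}^{-1}(\mathcal{C}_d)\neq\emptyset$.
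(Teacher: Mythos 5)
Your proposal is correct and is essentially the paper's own proof: the paper likewise applies $H^{-1}$ to Theorem \ref{decomp} (specialized to $(\theta,\lb)=(1,0)$) and invokes the $\GL_2(\C)$-equivariance from Proposition \ref{GL2-equiv} together with the preceding Remark identifying $H(\Hk_n^{\Gamma,\xi_d})$ with $\mathcal{M}_d^{\Gamma}$. Your write-up merely makes explicit the bookkeeping that the paper's one-line proof leaves implicit.
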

\begin{proof}
Applying $H^{-1}$ to Theorem \ref{decomp} and using Proposition \ref{GL2-equiv}, we get this decomposition into irreducible components.
\end{proof}

\subsection{Calogero-Moser space}
On the other side, i.e. when $(\theta,\lb)=(0,1)$, let us introduce the Calogero-Moser space. Denote the $n^{\text{th}}$ Calogero-Moser space by $\Ck_n$ which can be defined as
\begin{equation*}
\{(X,Y) \in \mathrm{M}_n(\C)^2| XY-YX + I_n \text{ is a rank 1 matrix}\}\sslash \mathrm{GL}_n(\C)
\end{equation*}
where $\GL_n(\C)$ acts by base change on $X$ and $Y$. George Wilson showed that $\Ck_n$ is a smooth, connected affine algebraic variety of dimension $2n$ \cite[Section 1]{Wil98}.
We define a natural action of $\GL_2(\C)$ on $\{(X,Y) \in \mathrm{M}_n(\C)^2| XY-YX + I_n \text{ is a rank 1 matrix}\}$
\begin{equation*}
g.(X,Y):= (aX+bY,cX+dY), \quad \forall g=\begin{pmatrix} a &b \\c & d \end{pmatrix} \in \GL_2(\C).
\end{equation*}
This action commutes with the $\GL_n(\C)$ and gives a well defined action on $\Ck_n$.\\
Consider now the morphism
\begin{center}
${C:\begin{array}{ccc}
\QVJndefu & \to & \Ck_n \\
\overline{(\alpha,\beta,v,w)}& \mapsto & \overline{(\alpha,\beta)}  \\
\end{array}}$.
\end{center} By construction, we have the following Lemma.\\
\begin{lemme}
\label{C_equi}
The isomorphism $C$ is an isomorphism of $\GL_2(\C)$-varieties.\\
\end{lemme}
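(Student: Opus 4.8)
The plan is to split the statement into its two assertions: that $C$ is an isomorphism of the underlying algebraic varieties, and that it intertwines the two $\GL_2(\C)$-actions. The first assertion is the classical identification of the $n$-th Calogero--Moser space with the Jordan quiver variety at nonzero deformation parameter. Combining Remark \ref{rmq_cases} (which identifies $\QVJndefu$ with the quiver variety whose points lie on the relevant moment-map fibre) with Wilson's description \cite[Section 1]{Wil98}, one sees that discarding the framing $(v^1,v^2)$ while remembering $(\alpha,\beta)$ is a bijection with algebraic inverse: the moment-map equation forces $I_n-[\alpha,\beta]$ to be the rank-one operator $v^1v^2$ (up to the sign absorbed by the rescaling of Remark \ref{rmq_cases}), so $(\alpha,\beta)$ determines $(v^1,v^2)$ up to the residual scaling taken care of by the $\GL_n(\C)$-quotient. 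I would simply record this and cite it rather than reproduce the argument.

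The real content, and the only computation I would actually carry out, is the equivariance, which is immediate because the two $\GL_2(\C)$-actions are given by the \emph{same} linear formula on the pairs $(\alpha,\beta)$ and $(X,Y)$. Concretely, I would take a class $\overline{(\alpha,\beta,v^1,v^2)}$ and $g=\begin{pmatrix} a & b \\ c & d\end{pmatrix}\in\GL_2(\C)$, and compute, using the definition of the $\GL_2(\C)$-action on $\QVJndefu$ inherited from the action on $\tQVJnb$, that $C(g.\overline{(\alpha,\beta,v^1,v^2)})=C(\overline{(a\alpha+b\beta,\,c\alpha+d\beta,\,v^1,v^2)})=\overline{(a\alpha+b\beta,\,c\alpha+d\beta)}$, which is exactly $g.\overline{(\alpha,\beta)}=g.C(\overline{(\alpha,\beta,v^1,v^2)})$ by the definition of the $\GL_2(\C)$-action on $\Ck_n$. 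Since $C$ only forgets the framing, which both actions leave untouched, the two sides agree on the nose.

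Before concluding I would check the two routine compatibilities that make the previous line meaningful. First, the computation must descend to the $\GL_n(\C)$-quotients on both sides; this is legitimate because the $\GL_2(\C)$- and $\GL_n(\C)$-actions commute, as noted right after the definition of the $\GL_2(\C)$-action. Second, the rescaling used to pass between $\mathcal{M}_\lb^\bullet(n)$ and $\QVJndefu$ must not interfere with equivariance: this is the only place where any care is needed, but it is mild, since that rescaling is a $\C^*$-scaling of the whole tuple $(\alpha,\beta,v^1,v^2)$ and therefore commutes with the linear reparametrisation $(\alpha,\beta)\mapsto(a\alpha+b\beta,c\alpha+d\beta)$. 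I expect no genuine obstacle: the lemma holds ``by construction'', and the proof amounts to unwinding the two definitions of the $\GL_2(\C)$-action and observing that they coincide on the $(\alpha,\beta)$-component that $C$ retains.
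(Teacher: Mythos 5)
Your proposal is correct and takes essentially the same route as the paper, which offers no separate argument at all but simply asserts the lemma ``by construction'': the identification of $\QVJndefu$ with $\Ck_n$ via forgetting the framing is the classical result of Wilson cited in the text, and equivariance holds because both $\GL_2(\C)$-actions are given by the identical formula $(\alpha,\beta)\mapsto(a\alpha+b\beta,c\alpha+d\beta)$ on the component that $C$ retains. Your two additional checks (descent to the $\GL_n(\C)$-quotient and compatibility with the rescaling) are exactly the routine verifications implicit in the paper's ``by construction.''
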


\noindent For $d\in \tilde{\mathcal{A}}_{n}$, let us denote by $\Ck_{d}:=C(\QV_d)$.\\
\begin{cor}
For each integer $n$ and each finite subgroup $\Gamma$ of $\SL_2(\C)$, we can decompose the $\Gamma$-fixed point locus of the $n^{\text{th}}$ Calogero-Moser  space into irreducible components
\begin{equation*}
\Ck_n^{\Gamma} = \coprod_{d \in \mathcal{A}_{n,0,1}}{\Ck_{d}}.
\end{equation*}
\end{cor}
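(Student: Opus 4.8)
The plan is to mirror the proof of Corollary \ref{cor_hn}, replacing the Hilbert-scheme isomorphism $H$ by the Calogero--Moser isomorphism $C$. Concretely, I would specialize Theorem \ref{decomp} to the parameter pair $(\theta,\lb)=(0,1)$, which falls in the allowed regime $(\theta,\lb)\neq(0,0)$ and corresponds to $\lb\neq 0$. With these parameters the variety $\QVJnb$ is by definition $\QVJndefu=\mathcal{M}^{\bullet}_{1}(n)$ (the stability parameter taking its default value $0$), so Theorem \ref{decomp} reads
\begin{equation*}
\QVJndefu^{\Gamma}=\coprod_{d\in\mathcal{A}^n_{\Gamma,1}}{\QV_d^{\Gamma}},
\end{equation*}
a decomposition into irreducible components, where $\mathcal{A}^n_{\Gamma,1}$ abbreviates the indexing set attached to $(\theta,\lb)=(0,1)$.

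Next I would transport this decomposition across $C$. By Lemma \ref{C_equi}, $C\colon\QVJndefu\iso\Ck_n$ is an isomorphism of $\GL_2(\C)$-varieties; since $\Gamma\subset\SL_2(\C)\subset\GL_2(\C)$, the map $C$ is in particular $\Gamma$-equivariant and therefore restricts to an isomorphism of fixed loci $\QVJndefu^{\Gamma}\iso\Ck_n^{\Gamma}$. An isomorphism of algebraic varieties carries irreducible components to irreducible components and respects disjointness, so applying $C$ term by term gives
\begin{equation*}
\Ck_n^{\Gamma}=\coprod_{d\in\mathcal{A}^n_{\Gamma,1}}{C(\QV_d^{\Gamma})}=\coprod_{d\in\mathcal{A}^n_{\Gamma,1}}{\Ck_n^{\Gamma,d}},
\end{equation*}
which is the asserted decomposition into irreducible components, with $\Ck_n^{\Gamma,d}:=C(\QV_d^{\Gamma})$ as defined just before the statement.

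The argument is essentially formal once Theorem \ref{decomp} is available, so I do not expect any genuine geometric obstacle; the only points needing care are bookkeeping ones. First, one must confirm that $\QVJnb$ at $(\theta,\lb)=(0,1)$ coincides on the nose with $\QVJndefu$, rather than merely being abstractly isomorphic, so that $C$ can be applied directly to the $\Gamma$-fixed locus; this is exactly what Remark \ref{rmq_cases} records. Second, one must match the indexing conventions, checking that the subscript in $\mathcal{A}^n_{\Gamma,1}$ denotes the deformation parameter $\lb=1$ with the default stability parameter $0$, which is the same set that Theorem \ref{decomp} produces for this choice of $(\theta,\lb)$. Granting these two identifications, the statement follows immediately, in exact parallel with Corollary \ref{cor_hn}.
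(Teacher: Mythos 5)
Your proposal is correct and is essentially the paper's own argument: the paper proves this corollary precisely by applying the isomorphism $C$ to the decomposition of Theorem \ref{decomp} (specialized to $(\theta,\lb)=(0,1)$) and invoking Lemma \ref{C_equi} for $\GL_2(\C)$-equivariance, hence $\Gamma$-equivariance on fixed loci. Your additional bookkeeping remarks (matching $\QVJnb$ at $(0,1)$ with $\QVJndefu$ via Remark \ref{rmq_cases}, and the indexing convention for $\mathcal{A}^n_{\Gamma,1}$) are exactly the identifications the paper uses implicitly.
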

\begin{proof}
Apply $C$ to Theorem \ref{decomp} and use Lemma \ref{C_equi} to obtain this decomposition.
\end{proof}

\subsection{On the parametrization sets $\mathcal{A}_{n,\oando}$}

Fix $(\theta,\lambda) \neq (0,0)$ and let us now give a combinatorial model of the indexing set $\mathcal{A}_{n,\oando}$.
Recall that for $a\in \mathcal{Q}_{\mathrm{fin}}$ we have denoted by $t_a \in W$ the image of $a$ by the isomorphism $W_{\mathrm{fin}} \ltimes \mathcal{Q}_{\mathrm{fin}} \iso W$. In the following, we will denote the fact that there exists $k \in \mathbb{Z}$, such that $a-b= kc$ for $(a,b,c) \in \left(\mathfrak{h}^*\right)^3$, by $a \equiv b [c]$.\\
\begin{lemme}
\label{trans}
For all $(a,d) \in \mathcal{Q}_{\mathrm{fin}} \times \mathcal{Q}$, we have $t_a.d \equiv d- a \text{ }[\delta]$.
\end{lemme}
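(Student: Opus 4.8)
The plan is to reduce the statement to an explicit computation in the affine weight lattice by passing to the usual linear reflection action. By the comparison formula \eqref{link_eq} in Remark \ref{rmq_k_equi}, the action ``$.$'' on $Q(\tilde T_{\Gamma})$ is conjugated to the natural reflection action ``$*$'' on $\mathfrak{h}_{\Gamma}^*$ through the level-one weight $\Lambda_0$; taking $\omega = t_a$ and $\alpha = d$ in \eqref{link_eq} gives
\begin{equation*}
t_a.d = \Lambda_0 - t_a*(\Lambda_0 - d).
\end{equation*}
So it suffices to compute $t_a*(\Lambda_0 - d)$ and to read off its $\delta^{\Gamma}$-component. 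First I would record that $\Lambda_0 - d$ has level $1$: since $d \in Q(\tilde T_{\Gamma})$ is a sum of roots we have $\langle d, \delta^{\vee}_{\Gamma}\rangle = 0$ (as already used for the definition of $\kappa^{\vee}$), while $\langle \Lambda_0, \delta^{\vee}_{\Gamma}\rangle = 1$ because $\Lambda_0 = \Lambda_{\alpha_0}$ and $\delta^{\vee}_{\Gamma}$ has coefficient $1$ on $\alpha_0^{\vee}$.

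Next I would invoke the standard formula (Kac \cite[§6.5]{kac90}) for the action of a translation $t_a$, with $a$ in the finite root lattice $Q(T_{\Gamma})$, on a weight $\lambda$ of level $k = \langle \lambda, \delta^{\vee}_{\Gamma}\rangle$:
\begin{equation*}
t_a*\lambda = \lambda + k\,a - \Bigl((\lambda \mid a) + \tfrac{1}{2}\,k\,(a \mid a)\Bigr)\delta^{\Gamma},
\end{equation*}
where $(\cdot \mid \cdot)$ is the normalized invariant form on $\mathfrak{h}_{\Gamma}^*$ and $\delta^{\Gamma} = \delta(\tilde T_{\Gamma})$ under the identification fixed before Definition \ref{def_action}. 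Applying this to $\lambda = \Lambda_0 - d$ with $k = 1$, and using $(\Lambda_0 \mid a) = 0$ — which holds because $a$ is a combination of the finite simple roots $\alpha_{\chi}$, $\chi \neq \chi_0$, each of which satisfies $(\Lambda_0 \mid \alpha_{\chi}) = \langle \Lambda_0, \alpha_{\chi}^{\vee}\rangle = 0$ in the simply-laced case — I obtain
\begin{equation*}
t_a*(\Lambda_0 - d) = \Lambda_0 - d + a - \Bigl(\tfrac{1}{2}(a\mid a) - (d\mid a)\Bigr)\delta^{\Gamma}.
\end{equation*}
Substituting into the displayed expression for $t_a.d$ then yields $t_a.d = d - a + \bigl(\tfrac{1}{2}(a\mid a) - (d\mid a)\bigr)\delta^{\Gamma}$.

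The only remaining point — and the one that deserves care rather than constituting a genuine difficulty — is that the scalar coefficient $\tfrac{1}{2}(a\mid a) - (d\mid a)$ of $\delta^{\Gamma}$ is an \emph{integer}, which is exactly what the congruence $\equiv [\delta^{\Gamma}]$ requires. This is where the simple-lacedness of $\tilde T_{\Gamma}$ (Remark \ref{rmq_mckay}, via Proposition \ref{mckay}) enters: the Cartan matrix entries $(\alpha_{\chi}\mid\alpha_{\psi})$ are integers, so $(d\mid a) \in \mathbb{Z}$ for $d,a$ in the root lattice, and the finite root lattice $Q(T_{\Gamma})$ is even, so $\tfrac{1}{2}(a\mid a) \in \mathbb{Z}$. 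Hence $t_a.d - (d - a) \in \mathbb{Z}\,\delta^{\Gamma}$, which is the claim. The main obstacle is thus purely bookkeeping: keeping the two actions ``$.$'' and ``$*$'' straight, correctly computing the level of $\Lambda_0 - d$, and fixing the normalization of the invariant form so that Kac's translation formula applies verbatim; once \eqref{link_eq} is used the computation is routine.
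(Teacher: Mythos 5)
Your proof is correct and follows essentially the same route as the paper: both pass through relation \eqref{link_eq} to write $t_a.d = \Lambda_0 - t_a*(\Lambda_0-d)$, apply Kac's translation formula 6.5.2, and kill the extra term using $\langle d,\delta^{\vee}_{\Gamma}\rangle = 0$. Your additional checks (the level computation, $(\Lambda_0\mid a)=0$, and the integrality of the $\delta^{\Gamma}$-coefficient via evenness of $Q(T_{\Gamma})$) are details the paper leaves implicit, and they are all correct.
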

\begin{proof}
Thanks to relation (\ref{link_eq}) and \cite[Formula $6.5.2$]{kac90}, we have
\begin{equation*}
t_a.d = \Lambda_0 - t_a*(\Lambda_0-d) \equiv d- a + \langle d, \delta^{\vee} \rangle [\delta].
\end{equation*}
Since $d \in \mathcal{Q}$, $\langle d,\delta^{\vee}\rangle=0$ by definition of $\delta^{\vee}$.
\end{proof}

\begin{lemme}
\label{lemme_ld}
For each $d \in \mathcal{Q}$, there exists a unique integer $r$ such that $d$ and $r\delta$ are in the same $W$-orbit for the $.$ action from Definition \ref{def_action}.
\end{lemme}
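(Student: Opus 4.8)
The plan is to prove existence and uniqueness separately, reducing both to the translation subgroup of $W(\tilde{T}_{\Gamma})$ together with the dictionary between the $.$~action and the natural reflection action $*$ furnished by relation (\ref{link_eq}). The starting remark is that the $\chi_0$-mark of $\delta^{\Gamma}$ equals $\dim(X_{\chi_0})=1$, so $\alpha_{\chi_0}=\delta^{\Gamma}-\theta$ with $\theta$ the highest root of $T_{\Gamma}$, and therefore $Q(\tilde{T}_{\Gamma})=\Z\delta^{\Gamma}\oplus Q(T_{\Gamma})$ as abelian groups.

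For existence I would use Lemma \ref{trans} and nothing else. Write an arbitrary $d\in Q(\tilde{T}_{\Gamma})$ uniquely as $d=r_0\delta^{\Gamma}+\bar{d}$ with $r_0\in\Z$ and $\bar{d}\in Q(T_{\Gamma})$, and apply the translation $t_{\bar{d}}\in W(\tilde{T}_{\Gamma})$. By Lemma \ref{trans}, $t_{\bar{d}}.d\equiv d-\bar{d}=r_0\delta^{\Gamma}\equiv 0\ [\delta^{\Gamma}]$, and by the very definition of $\equiv$ this means $t_{\bar{d}}.d=r\delta^{\Gamma}$ for some $r\in\Z$. Thus $d$ lies in the $.$-orbit of $r\delta^{\Gamma}$ with no computation beyond the congruence already established.

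For uniqueness I would transport the statement to the linear action $*$ via (\ref{link_eq}): for $\omega\in W(\tilde{T}_{\Gamma})$ the equality $\omega.(r\delta^{\Gamma})=r'\delta^{\Gamma}$ is equivalent to $\omega*(\Lambda_0-r\delta^{\Gamma})=\Lambda_0-r'\delta^{\Gamma}$. In the decomposition $\mathfrak{h}_{\Gamma}^*=\C\Lambda_0\oplus V\oplus\C\delta^{\Gamma}$, where $V=\mathrm{span}\{\alpha_{\chi}\}_{\chi\neq\chi_0}$, both weights $\Lambda_0-r\delta^{\Gamma}$ and $\Lambda_0-r'\delta^{\Gamma}$ have level $1$ and vanishing $V$-component, since $\delta^{\Gamma}$ is $W(\tilde{T}_{\Gamma})$-stable (Remark \ref{rmq_k_equi}) with zero $V$-component. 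Writing $\omega=t_a w$ with $w\in W(T_{\Gamma})$ and $a\in Q(T_{\Gamma})$ under the isomorphism $W(T_{\Gamma})\ltimes Q(T_{\Gamma})\iso W(\tilde{T}_{\Gamma})$, the finite reflection $w$ fixes $\Lambda_0+s\delta^{\Gamma}$ because $\langle\Lambda_0,\alpha^{\vee}_{\chi}\rangle=0$ for $\chi\neq\chi_0$ and $w$ fixes $\delta^{\Gamma}$, while \cite[Formula $6.5.2$]{kac90} shows that at level $1$ the translation $t_a$ shifts the $V$-component by exactly $a$. Hence the $V$-component of $\omega*(\Lambda_0-r\delta^{\Gamma})$ equals $a$; forcing it to vanish gives $a=0$, so $\omega=w$ fixes $\Lambda_0-r\delta^{\Gamma}$ and $r=r'$.

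The one genuine point to get right — and the reason the argument must pass through (\ref{link_eq}) — is that $\delta^{\Gamma}$ is invariant only for the linear action $*$ and \emph{not} for the affine $.$~action (indeed a direct check from Definition \ref{def_action} gives $s_{\chi_0}.\delta^{\Gamma}=\delta^{\Gamma}+\alpha_{\chi_0}$). This is precisely why the size statistic is not $.$-invariant and cannot separate the orbits directly; the usable invariant, namely vanishing of the $V$-component on the level-$1$ slice $\Lambda_0-Q(\tilde{T}_{\Gamma})$, only becomes visible after the translation by (\ref{link_eq}). I would therefore keep existence purely translational via Lemma \ref{trans} and phrase uniqueness entirely on the $*$-side.
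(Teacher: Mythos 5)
Your proof is correct and takes essentially the same route as the paper: the existence half is word-for-word the paper's argument (split $d=r_0\delta^{\Gamma}+\bar{d}$ with $\bar{d}\in Q(T_{\Gamma})$ and apply $t_{\bar{d}}$ via Lemma \ref{trans}), and the uniqueness half rests on the same two pillars, namely the decomposition $W(\tilde{T}_{\Gamma})\simeq W(T_{\Gamma})\ltimes Q(T_{\Gamma})$ with the finite part acting trivially on the relevant element, and the fact that a translation $t_a$ shifts by $a$ modulo $\delta^{\Gamma}$, so that $a\in Q(T_{\Gamma})\cap\Z\delta^{\Gamma}=0$. The only cosmetic difference is that for uniqueness the paper reuses Lemma \ref{trans} directly on the $.$~side (applied to $t_a.(r_1\delta^{\Gamma})$, with $W(T_{\Gamma})$-invariance of $\delta^{\Gamma}$ coming from the kernel of the Cartan matrix), whereas you unpack the same computation on the $*$~side via (\ref{link_eq}) and \cite[Formula $6.5.2$]{kac90} — which is exactly the proof of Lemma \ref{trans}, so nothing is gained or lost.
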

\begin{proof}
Take $d \in \mathcal{Q}$, then $a:=d-d_0\delta \in \mathcal{Q}_{\mathrm{fin}}$ and thanks to Lemma \ref{trans}, $t_a.d$ is an element of the desired form.

Now suppose  that there are two integers  $r_1$ and $r_2$ such that $r_1\delta$ and $r_2\delta$ are in the same $W$-orbit. Since $\delta$ is in the kernel of the generalized Cartan matrix of type $\Gamma$, $\delta$ is fixed under the action of $W_{\mathrm{fin}}$. This observation reduces the $W$-orbit of $r_1\delta$ to the $\mathcal{Q}_{\mathrm{fin}}$-orbit. There must then exist $a \in \mathcal{Q}_{\mathrm{fin}}$ such that $t_a.r_1\delta=r_2\delta$. Using Lemma \ref{trans}, $t_a.r_1\delta \equiv r_1\delta-a\text{ } [\delta]$, we can conclude that $a=0$ and that $r_1=r_2$.
\end{proof}

\begin{deff}
\label{deff_weight}
For $d \in \mathcal{Q}$ let the \textbf{weight of} $d$ be the unique integer $r$ such that $r\delta$ and $d$ are in the same $W$-orbit. It will be denoted by $\mathrm{wt}(d)$.\\
\end{deff}

\noindent The following proposition establishes a bridge between quiver varieties and combinatorics.\\
\begin{prop}
\label{link geom comb}
Let $(\theta,\lb) \in \Theta^{++}\times \Lambda^{++}$. For each $d \in \Delta$
\begin{equation*}
 \QV_{\theta}(d) \neq \emptyset \iff \mathrm{wt}(d) \geq 0 \iff \QV^{\lb}(d) \neq \emptyset.
\end{equation*}

\end{prop}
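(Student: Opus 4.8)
The plan is to reduce the whole statement to the case $d=r\delta^{\Gamma}$ with $r:=\mathrm{wt}(d)$, and to read off both implications from a single dimension count. I identify $\Delta_{\Gamma}=Q(\tilde{T}_{\Gamma})$ (so $\chi\leftrightarrow\alpha_{\chi}$ as in Remark \ref{rmq_mckay}), normalise the invariant form on $\mathfrak{h}^{*}_{\Gamma}$ by $(\alpha_{\chi},\alpha_{\chi})=2$, and write $q(d)=\tfrac12\,{}^{t}d\,A(\tilde{T}_{\Gamma})\,d=\tfrac12(d,d)$ for the Tits form. By Lemma \ref{lemme_ld} and Definition \ref{deff_weight} there is $\omega\in W(\tilde{T}_{\Gamma})$ with $\omega.d=r\delta^{\Gamma}$. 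Recall also that, through (\ref{link_eq}), the $.$-action is $\eta.d=\Lambda_{0}-\eta*(\Lambda_{0}-d)$, that $\delta^{\Gamma}$ lies in the kernel of $A(\tilde{T}_{\Gamma})$ and is therefore fixed by the linear reflection action $*$ of $W(\tilde{T}_{\Gamma})$, and that $\delta^{\Gamma}_{\chi_{0}}=1$.

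First I would prove the implication ``$\neq\emptyset\Rightarrow\mathrm{wt}(d)\geq0$'' uniformly. Put $f(d):=d_{\chi_{0}}-q(d)$; I claim $f$ is $W(\tilde{T}_{\Gamma})$-invariant for the $.$-action. Writing $e=\Lambda_{0}-d$ and using $*$-invariance of $(\cdot,\cdot)$ one gets $q(s_{\chi}.d)-q(d)=(\Lambda_{0}-s_{\chi}*\Lambda_{0},\,e)$, where $\Lambda_{0}-s_{\chi}*\Lambda_{0}=\langle\Lambda_{0},\alpha_{\chi}^{\vee}\rangle\alpha_{\chi}$ equals $\alpha_{\chi_{0}}$ if $\chi=\chi_{0}$ and $0$ otherwise. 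For $\chi\neq\chi_{0}$ both $q$ and the $\chi_{0}$-coordinate are unchanged by Definition \ref{def_action}, so $f(s_{\chi}.d)=f(d)$; for $\chi=\chi_{0}$ the values $(\alpha_{0},\Lambda_{0})=1$ and $(\alpha_{0},d)=2d_{\chi_{0}}-\sum_{h'=\chi_{0}}d_{h''}$ give $q(s_{\chi_{0}}.d)-q(d)=1-2d_{\chi_{0}}+\sum_{h'=\chi_{0}}d_{h''}$, which exactly cancels the change $(s_{\chi_{0}}.d)_{\chi_{0}}-d_{\chi_{0}}$. Hence $f$ is invariant, and since $f(r\delta^{\Gamma})=r$ we obtain $f(d)=\mathrm{wt}(d)$. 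Now if $\QVG_{\boldsymbol{\theta}}(d)$ or $\QVG_{\lb}(d)$ is nonempty it is smooth (as recalled in the proof of Proposition \ref{prop_irr}) of dimension $2d_{\chi_{0}}-{}^{t}d\,A(\tilde{T}_{\Gamma})\,d=2f(d)=2\,\mathrm{wt}(d)$ by the formula used in Proposition \ref{dim_r_delt}; nonnegativity of the dimension forces $\mathrm{wt}(d)\geq0$.

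For the converse I would first observe that for $r\geq0$ the whole $.$-orbit of $r\delta^{\Gamma}$ stays in $\Delta_{\Gamma}^{+}$: by the formula above $\omega^{-1}.(r\delta^{\Gamma})=r\delta^{\Gamma}+(\Lambda_{0}-\omega^{-1}*\Lambda_{0})$, and as $\Lambda_{0}$ is a dominant fundamental weight, $\Lambda_{0}-\omega^{-1}*\Lambda_{0}\in\bigoplus_{\chi}\Z_{\geq0}\alpha_{\chi}$, i.e. a nonnegative dimension vector; added to $r\delta^{\Gamma}\geq0$ this shows $d\in\Delta_{\Gamma}^{+}$. Therefore Maffei's isomorphism (Proposition \ref{maff}) applies to the pair $(d,r\delta^{\Gamma})$ and yields $\QVG_{\boldsymbol{\theta}}(d)\simeq\mathcal{M}^{\Gamma}_{\omega.\boldsymbol{\theta}}(r\delta^{\Gamma})$. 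On the deformation side a parameter in $\Lambda_{\Gamma}^{++}$ makes the $\Aut_{\Gamma}$-action on the level set free, so every point is $\boldsymbol{\theta}$-stable and $\QVG_{\lb}(d)=\mathcal{M}^{\Gamma}_{\boldsymbol{\theta},\lb}(d)$ for any $\theta\in\Theta_{\Gamma}^{++}$; the same proposition then gives $\QVG_{\lb}(d)\simeq\mathcal{M}^{\Gamma}_{\omega.\lb}(r\delta^{\Gamma})$. It remains to show $\mathcal{M}^{\Gamma}_{\boldsymbol{\theta}'}(r\delta^{\Gamma})\neq\emptyset$ (resp. $\mathcal{M}^{\Gamma}_{\lb'}(r\delta^{\Gamma})\neq\emptyset$) for the generic parameters $\theta'=\omega.\theta$, $\lb'=\omega.\lb$, with $r\geq0$.

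This last step is the hard part, since $\omega.\theta$ need not lie in $\Theta_{\Gamma}^{++}$, whereas Proposition \ref{dim_r_delt} only furnishes a point in the dominant chamber. The stabiliser of $r\delta^{\Gamma}$ for the $.$-action is exactly the finite group $W(T_{\Gamma})$ (by Lemma \ref{trans} no nontrivial translation fixes $r\delta^{\Gamma}$), so Losev's isomorphism (Proposition \ref{losev}) already moves the parameter freely within its $W(T_{\Gamma})$-orbit over the common affine quotient $\mathcal{M}^{\Gamma}_{\boldsymbol{0}}(r\delta^{\Gamma})$. For the remaining translation directions I would invoke that, for the fixed dimension vector $r\delta^{\Gamma}$, the projective morphism $\mathcal{M}^{\Gamma}_{\boldsymbol{\theta}'}(r\delta^{\Gamma})\to\mathcal{M}^{\Gamma}_{\boldsymbol{0}}(r\delta^{\Gamma})$ together with variation of GIT makes nonemptiness independent of the chosen generic $\theta'$ (equivalently, degenerate a generic $\lb'$ to $0$ in the flat family $\mathcal{M}^{\Gamma}_{\boldsymbol{\theta}',\lb'}(r\delta^{\Gamma})$, whose generic fibre is nonempty of dimension $2r$ by Proposition \ref{prop_irr}); starting from the nonempty dominant chamber of Proposition \ref{dim_r_delt} this yields nonemptiness in every generic chamber. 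Granting this parameter-independence, $\mathcal{M}^{\Gamma}_{\omega.\boldsymbol{\theta}}(r\delta^{\Gamma})$ and $\mathcal{M}^{\Gamma}_{\omega.\lb}(r\delta^{\Gamma})$ are nonempty, hence so are $\QVG_{\boldsymbol{\theta}}(d)$ and $\QVG_{\lb}(d)$, which closes the chain of equivalences.
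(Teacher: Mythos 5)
Your forward implication is correct and genuinely different from the paper's argument: you prove that $f(d)=d_{\chi_0}-q(d)$ is invariant for the $.$-action of $W(\tilde{T}_{\Gamma})$, identify $f(d)=\mathrm{wt}(d)$ by evaluating on $r\delta^{\Gamma}$, and conclude from nonnegativity of $\dim\QVG_{\boldsymbol{\theta}}(d)=2f(d)$. That is a clean dimension-count proof of ``nonempty $\Rightarrow\mathrm{wt}(d)\geq0$'', whereas the paper extracts both implications at once from Nakajima's nonemptiness criterion; this half of your proposal is fine (modulo the same expected-dimension input the paper itself takes from Kirillov's Theorem $10.35$).

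The converse, however, contains a genuine gap, and you flag it yourself (``I would invoke\dots'', ``Granting this parameter-independence\dots''). After applying Proposition \ref{maff} you must show that $\mathcal{M}^{\Gamma}_{\omega.\boldsymbol{\theta}}(r\delta^{\Gamma})$ is nonempty, where $\omega.\theta$ is generic but in general not dominant, while the only nonemptiness statement available (Proposition \ref{dim_r_delt}) lives in the dominant chamber; Proposition \ref{losev} only moves the parameter inside its $W(T_{\Gamma})$-orbit, as you note. The assertion that nonemptiness is the same in every generic chamber is exactly the missing content: general variation of GIT does not provide it (nothing prevents the semistable locus of a fixed affine variety from being empty in one chamber and nonempty in another; constancy across generic chambers is a special feature of quiver varieties that itself requires proof), and the flat-family alternative is equally unsupported, since Proposition \ref{prop_irr} gives smoothness and irreducibility of the fibres but not their nonemptiness, and flatness of a non-proper family would not force all fibres to be nonempty anyway. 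This chamber-independence is essentially equivalent in strength to the criterion the paper invokes instead: the paper never leaves the dominant chamber, applying \cite[Theorem $10.2$]{Nak98} (or \cite[Theorem $13.19$]{Kir06}) directly to $\QVG_{\boldsymbol{\theta}}(d)$ with $\theta\in\Theta_{\Gamma}^{++}$ (nonempty iff $\Lambda_0-d$ is a weight of $L(\Lambda_0)$), then performs the Weyl-group reduction purely combinatorially inside the weight system of $L(\Lambda_0)$ using Kac's formula $6.5.2$ and Carter's description of the weights, and finally transfers the statement to the deformation parameter by a hyper-K\"ahler rotation. To close your argument you would either have to prove the chamber-independence directly (for instance via Crawley-Boevey's root-theoretic criterion for nonemptiness of deformed moment-map fibres, which for a generic deformation parameter reduces to the chamber-free condition that $(d,1)$ be a root of the framed McKay quiver) or, as the paper does, apply the weight criterion at the dominant parameter itself; as written, the last step of your chain of equivalences is assumed rather than proved.
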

\begin{proof}
Using \cite[Theorem $10.2$]{Nak98} (or  \cite[Theorem $13.19$]{Kir06}), the variety $\QV_{\theta}(d)$ is nonempty if and only if $\Lambda_0-d$ is a weight of the basic representation $L(\Lambda_0)$ of the Kac-Moody Lie algebra of type $\Gamma$. Moreover, using \cite[Theorem $20.23$]{Cart05} we have that the set of weights of $L(\Lambda_0)$ is the following
\begin{equation*}
\left\{\Lambda_0+\gamma-(\frac{1}{2}\langle \gamma, \gamma \rangle - k) \delta \mid \gamma \in \mathcal{Q}_{\mathrm{fin}}, k \in \mathbb{Z}_{\geq 0}\right\}.
\end{equation*}
The Lemma \ref{lemme_ld}, tells us that there is $\omega \in W$ such that $d=\omega.(r\delta)$ with $r=\mathrm{wt}(d)$. Since $\delta$ is fixed by the action of  $W_{\mathrm{fin}}$, there is $a \in Q_{\mathrm{fin}}$ such that $d=t_a.(r\delta)$. Thanks to \cite[Formula $6.5.2$]{kac90} and the definitions of $\delta$ and $\Lambda_0$, we have that
\begin{equation*}
t_a*(r\delta)= r\delta -a + \frac{1}{2}\langle a, a \rangle \delta.
\end{equation*}
Using relation (\ref{link_eq}), we can conclude that $d$ is a weight of $L(\Lambda_0)$ if and only if $r \geq 0$.
In addition, we know that both $\QV_{2\lb}(d)$ and $\QV^{\lb}(d)$ are hyper-K\"ahler reductions \cite[Corollary $6.2$]{Ki94} and \cite[Theorem $3.1$]{Nak94}. Using the rotation map defined in \cite[Section $3.7$]{Gor08}, we have that these varieties are diffeomorphic. By the first equivalence, we have that $\QV^{\lb}(d)$ is nonempty if and only if $\mathrm{wt}(d) \geq 0$.
\end{proof}

\begin{thm}
\label{thm_indexing}
For each integer $n$ and each finite subgroup $\Gamma$ of $\SL_2(\C)$, the set indexing the irreducible components of $\Hk_n^{\Gamma}$ and of $\Ck_n^{\Gamma}$ are equal
\begin{equation*}
\mathcal{A}_{n,1,0}=\mathcal{A}_{n,0,1} = \left\{d \in \Delta^+  \mid |d|=n, \mathrm{wt}(d) \geq 0\right\}.
\end{equation*}
\end{thm}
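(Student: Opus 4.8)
The plan is to express membership in the index set $\mathcal{A}^n_{\Gamma,\oando}$ as the nonemptiness of a single McKay quiver variety, and then to read off the answer from the combinatorial criterion of Proposition \ref{link geom comb}. Since by definition $\mathcal{A}^n_{\Gamma,\oando}\subseteq\tilde{\mathcal{A}}^n_{\Gamma}=\{d\in\Delta_{\Gamma}^+\mid |d|_{\Gamma}=n\}$, the conditions $d\in\Delta_{\Gamma}^+$ and $|d|_{\Gamma}=n$ are built in, and only the condition $\mathrm{wt}(d)\geq 0$ remains to be produced.

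The first and central step is to prove, for $\sigma\in\mathcal{C}_d$, the equivalence $d\in\mathcal{A}^n_{\Gamma,\oando}\iff\QVG_{\boldsymbol{\theta},\lb\delta^{\Gamma}}(M^{\sigma})\neq\emptyset$. For the forward direction I would use that $d\in\mathcal{A}^n_{\Gamma,\oando}$ means $q_{\oando}^{-1}(\mathcal{C}_d)\neq\emptyset$, hence $q_{\oando}^{-1}(\sigma)\neq\emptyset$ for every $\sigma\in\mathcal{C}_d$ by $\GL_n(\C)$-equivariance, and then Proposition \ref{rep works} identifies $\tilde{\kappa}^{\Gamma}_{\oando,\sigma}(q_{\oando}^{-1}(\sigma))$ with $\tQV^{\Gamma}_{\boldsymbol{\theta},\lb\delta^{\Gamma}}(M^{\sigma})$, which is therefore nonempty. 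For the converse, starting from a representative $(\Delta,Z_1,Z_2)\in\tQV^{\Gamma}_{\boldsymbol{\theta},\lb\delta^{\Gamma}}(M^{\sigma})$, the reconstruction of Proposition \ref{reconstr works}, together with the ``other inclusion'' computed in the proof of Proposition \ref{rep works}, shows that $(-\Delta_{e_2},\Delta_{e_1},Z_1,Z_2,\sigma)\in L^{\Gamma}_{\oando}$; applying $q_{\oando}$ places a point above $\sigma$, so $q_{\oando}^{-1}(\mathcal{C}_d)\neq\emptyset$ and $d\in\mathcal{A}^n_{\Gamma,\oando}$.

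Next I would pass from the $\Gamma$-module $M^{\sigma}$ to the dimension vector $d$. Because $\sigma\in\mathcal{C}_d$ forces $M^{\sigma}\cong\bigoplus_{\chi\in I_{\Gamma}}V^d_{\chi}\otimes X_{\chi}$ as $\Gamma$-modules, Remark \ref{var_iso} and Theorem \ref{link_var} give an isomorphism $\QVG_{\boldsymbol{\theta},\lb\delta^{\Gamma}}(M^{\sigma})\cong\QVG_{\boldsymbol{\theta},\lb\delta^{\Gamma}}(d)$; in particular the two are simultaneously empty or nonempty. Combining with the previous step, I obtain $d\in\mathcal{A}^n_{\Gamma,\oando}\iff\QVG_{\boldsymbol{\theta},\lb\delta^{\Gamma}}(d)\neq\emptyset$. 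It then remains to specialize the parameters and invoke Proposition \ref{link geom comb}. In the Hilbert regime $(\theta,\lb)=(\boldsymbol{1},0)$ one has $\lb\delta^{\Gamma}=0$ and $\theta=\boldsymbol{1}\in\Theta_{\Gamma}^{++}$, so the first equivalence of Proposition \ref{link geom comb} gives $\QVG_{\boldsymbol{\theta}}(d)\neq\emptyset\iff\mathrm{wt}(d)\geq 0$, yielding $\mathcal{A}^n_{\Gamma,\boldsymbol{1}}=\{d\in\Delta_{\Gamma}^+\mid |d|_{\Gamma}=n,\ \mathrm{wt}(d)\geq 0\}$. In the Calogero-Moser regime $(\theta,\lb)=(0,1)$ one has $\theta=0$ while the deformation parameter is $\delta^{\Gamma}$, which lies in $\Lambda_{\Gamma}^{++}$ since $\delta^{\Gamma}_{\chi}=\dim(X_{\chi})>0$ for all $\chi$; the second equivalence of Proposition \ref{link geom comb}, applied with $\lb=\delta^{\Gamma}$, gives $\QVG_{\delta^{\Gamma}}(d)\neq\emptyset\iff\mathrm{wt}(d)\geq 0$, so $\mathcal{A}^n_{\Gamma,1}$ equals the same set. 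Both equalities follow at once.

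I expect the main obstacle to be the converse half of the central equivalence, where one must manufacture a genuine $\Gamma$-fixed point of $\QVJnb$ sitting over $\sigma$ out of an abstract point of the McKay quiver variety; fortunately this is precisely the reconstruction already carried out. The remaining care is purely verificational: checking that the Hilbert and Calogero-Moser parameter choices indeed fall within the hypotheses $\Theta_{\Gamma}^{++}$ and $\Lambda_{\Gamma}^{++}$ of Proposition \ref{link geom comb}, which is immediate once one notes $\boldsymbol{1}\in\Theta_{\Gamma}^{++}$ and $\delta^{\Gamma}\in\Lambda_{\Gamma}^{++}$.
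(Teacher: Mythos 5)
Your proposal is correct and follows essentially the same route as the paper: both reduce membership in $\mathcal{A}^n_{\Gamma,\oando}$ to nonemptiness of $\QVG_{\boldsymbol{\theta},\lb\delta^{\Gamma}}(M^{\sigma})\cong\QVG_{\boldsymbol{\theta},\lb\delta^{\Gamma}}(d)$ via Propositions \ref{rep works}, \ref{reconstr works} and Theorem \ref{link_var}, and then apply Proposition \ref{link geom comb} with $\theta=\boldsymbol{1}$ for the Hilbert case and $\lb=\delta^{\Gamma}$ for the Calogero--Moser case. The only difference is presentational: the paper cites the equivalence $d\in\mathcal{A}^n_{\Gamma,\oando}\iff\QV^{\Gamma}_d\neq\emptyset$ directly from the proof of Theorem \ref{decomp}, whereas you re-derive it from the underlying propositions.
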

\begin{proof}
Take $d \in \tilde{\mathcal{A}_{n}} \subset \Delta_{\Gamma}$ and let us reformulate combinatorially the definiton of  $\mathcal{A}_{n}$. Thanks to the proof of Theorem \ref{decomp}, $d$ is in $\mathcal{A}_{n,\theta,\lb}$ if and only if  $\QV_d$ is nonempty. Since ${\QV_d=\iota_{M^{\sigma}}\left(\QV_{\theta}^{\lb\delta}(M^{\sigma})\right)}$ for any $\sigma \in \mathcal{C}_d$, it is nonempty if and only if $\QV_{\theta}^{\lb\delta}(d)$ is nonempty. Applying Proposition \ref{link geom comb} first for $\theta=1$ and then for $\lb=\delta$, gives the result.
\end{proof}

\noindent From now on let us denote $\mathcal{A}_{n,1,0}=\mathcal{A}_{n,0,1}$ by $\mathcal{A}_{n}$.\\

\begin{rmq}
Note that when $\Gamma$ is the cyclic group $\mu_{\ell}$ of order $\ell$ in $\mathbb{T}_1$, the maximal diagonal torus of $\SL_2(\C)$, an explicit combinatorial model of $\mathcal{A}_n$ in terms of $\ell$-cores exists \cite[Lemma $7.8$]{Gor08}. Note also that Theorem \ref{thm_indexing} combined with \cite[Lemma $4.9$]{BM21} is another way to obtain this model.
\end{rmq}

\noindent To finish, let us give a simple expression of the dimension of the irreducible components $\mathcal{M}_d$ for each $d \in \mathcal{A}_n$.\\

\begin{prop}
\label{prop_dim_irr_comp}
For each $d \in \mathcal{A}_n$, the variety $\mathcal{M}_d$ has dimension $2\mathrm{wt}(d)$.
\end{prop}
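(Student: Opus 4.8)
The plan is to transport $\mathcal{M}^{\Gamma}_d$ to a Nakajima quiver variety whose dimension vector is a multiple of $\delta^{\Gamma}$, where the dimension can be read off directly. First I would fix $\sigma\in\mathcal{C}_d$ and use Proposition \ref{prop_iso_quiver_var} to identify $\mathcal{M}^{\Gamma}_d$ with the McKay quiver variety $\QVG_{\boldsymbol{\theta},\lb\delta^{\Gamma}}(M^{\sigma})$. Since $\dim\Hom_{\Gamma}(X_{\chi},M^{\sigma})=d_{\chi}$, the $\Gamma$-module $M^{\sigma}$ is isomorphic to $M^{d}=\bigoplus_{\chi}V^d_{\chi}\otimes X_{\chi}$, so Remark \ref{var_iso} together with Theorem \ref{link_var} identifies $\QVG_{\boldsymbol{\theta},\lb\delta^{\Gamma}}(M^{\sigma})$ with the Nakajima quiver variety $\mathcal{M}^{\Gamma}_{\boldsymbol{\theta},\lb\delta^{\Gamma}}(d)$ of the double quiver framed at $\chi_0$. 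As $\mathcal{A}^n_{\Gamma}=\mathcal{A}^n_{\Gamma,\boldsymbol{1}}$, I am free to take $\theta=1\in\Theta^{++}_{\Gamma}$ and $\lb=0$, so it suffices to compute $\dim\mathcal{M}^{\Gamma}_{\boldsymbol{1}}(d)$.

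Next I would bring in the weight. By Definition \ref{deff_weight} and Lemma \ref{lemme_ld} there exists $\omega\in W(\tilde{T}_{\Gamma})$ with $\omega.d=\mathrm{wt}(d)\,\delta^{\Gamma}$, and since $d\in\mathcal{A}^n_{\Gamma}$, Theorem \ref{thm_indexing} gives $\mathrm{wt}(d)\geq 0$, hence $\mathrm{wt}(d)\,\delta^{\Gamma}\in\Delta^+_{\Gamma}$. Applying Proposition \ref{maff} with this $\omega$ and $\lb=0$ produces an isomorphism of algebraic varieties
\begin{equation*}
\mathrm{Maff}_{d,\boldsymbol{1},\omega}^{\Gamma}:\mathcal{M}^{\Gamma}_{\boldsymbol{1}}(d)\iso\mathcal{M}^{\Gamma}_{\omega.\boldsymbol{1}}\!\left(\mathrm{wt}(d)\,\delta^{\Gamma}\right),
\end{equation*}
which in particular preserves dimension. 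It then remains to compute the dimension of the right-hand side. Exactly as in the proof of Proposition \ref{dim_r_delt}, I would invoke the dimension formula \cite[Theorem 10.35]{Kir06}: since $A(\tilde{T}_{\Gamma})\delta^{\Gamma}=0$ and the framing sits at $\chi_0$ with $\delta^{\Gamma}_{\chi_0}=1$, the dimension of $\mathcal{M}^{\Gamma}_{\omega.\boldsymbol{1}}(\mathrm{wt}(d)\,\delta^{\Gamma})$ equals $2\,\mathrm{wt}(d)\,\delta^{\Gamma}_{\chi_0}=2\,\mathrm{wt}(d)$, which yields the claim.

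The main obstacle is that the stability parameter $\omega.\boldsymbol{1}$ produced by Maffei's isomorphism need not lie in $\Theta^{++}_{\Gamma}$, so Proposition \ref{dim_r_delt} cannot be quoted verbatim for the target variety. This is harmless for the dimension count, however, because the formula of \cite[Theorem 10.35]{Kir06} depends only on the dimension vector and the framing (through $A(\tilde{T}_{\Gamma})$) and not on the chosen stability parameter, while nonemptiness of the target is automatic since it is isomorphic to the irreducible component $\mathcal{M}^{\Gamma}_d$. Alternatively, one could bypass Maffei's isomorphism altogether and apply the same dimension formula directly to $\mathcal{M}^{\Gamma}_{\boldsymbol{1}}(d)$, reducing the statement to the combinatorial identity $2d_{\chi_0}-\langle d,A(\tilde{T}_{\Gamma})d\rangle=2\,\mathrm{wt}(d)$; this follows from the $W(\tilde{T}_{\Gamma})$-invariance of the standard bilinear form on $\mathfrak{h}^*_{\Gamma}$ via relation (\ref{link_eq}), using that $\delta^{\Gamma}$ is isotropic and pairs to $1$ with $\Lambda_0$. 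I would present the Maffei route as primary, since it is closest to the tools already developed in Section \ref{chap_quiver}, and mention the direct computation only as a cross-check.
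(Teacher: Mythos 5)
Your proposal is correct and follows essentially the same route as the paper's own proof: identify $\mathcal{M}^{\Gamma}_d$ with $\mathcal{M}^{\Gamma}_{\boldsymbol{1}}(M^{\sigma})$ via Proposition \ref{prop_iso_quiver_var}, pass to the Nakajima quiver variety $\mathcal{M}^{\Gamma}_{\boldsymbol{1}}(d)$ via Theorem \ref{link_var}, apply Maffei's isomorphism (Proposition \ref{maff}) to reach the dimension vector $\mathrm{wt}(d)\,\delta^{\Gamma}$, and read off the dimension as in Proposition \ref{dim_r_delt}. The only substantive difference is that you explicitly flag the point the paper's proof leaves implicit --- that the target stability parameter $\omega.\boldsymbol{1}$ need not lie in $\Theta^{++}_{\Gamma}$, so Proposition \ref{dim_r_delt} cannot be quoted verbatim --- and both of your patches are sound: the dimension formula of \cite[Theorem 10.35]{Kir06} applied to the smooth nonempty target, or, more cleanly, the direct computation of $\dim\mathcal{M}^{\Gamma}_{\boldsymbol{1}}(d)=2d_{\chi_0}-\langle d,A(\tilde{T}_{\Gamma})d\rangle=2\,\mathrm{wt}(d)$ using relation (\ref{link_eq}) and the $W(\tilde{T}_{\Gamma})$-invariance of the bilinear form, which bypasses Maffei's isomorphism entirely.
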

\begin{proof}
Take $d \in \mathcal{A}_n$. There exists $\omega_d \in W$  and $r_d \in \mathbb{Z}_{\geq 0}$ such that $\omega_d.d=r_d\delta$. If $r_d=0$, it is clear that the dimension of $\mathcal{M}_d$ is $0$. Now if $r_d >0$, Proposition \ref{prop_iso_quiver_var} gives that $\mathcal{M}_d$ is isomorphic to $\mathcal{M}_{1}(M^{\sigma})$ for $\sigma \in \mathcal{C}_d$. Moreover, Theorem \ref{link_var} gives that $\mathcal{M}_{1}(M^{\sigma})$ is isomorphic to $\mathcal{M}_{1}(d):=\mathcal{M}_{1}(d,\chi_0)$. We can now use Proposition \ref{maff} and Proposition \ref{dim_r_delt}, to conclude that the dimension is $2r_d$.
\end{proof}

\section{Resolutions of $\mathcal{Y}_k$ and $\pi_0(\Hk_n^{\Gamma})$}
\label{sect_res}

Recall from introduction that with $\Gamma$ and $n$ we have build a group $\Gamman= \mathfrak{S}_n \ltimes \Gamma^n$ and a symplectic singularity $\mathcal{Y}_n=\C^{2n}/\Gamman$. Gwyn Bellamy and Alastair Craw have classified all projective, symplectic resolutions of $\mathcal{Y}_n$ in terms of quiver varieties \cite[Corollary $1.3$]{BC18}. Moreover we have shown, in section \ref{chap_irr}, that for each integer $k\geq 1$, the irreducible components of $\Hk_k^{\Gamma}$ can be described in terms of quiver varieties. A natural question then arises.
Given $X \to \mathcal{Y}_n$ a projective, symplectic resolution, is it possible to find an integer $p_X$ and an irreducible component of $\Hk_{p_X}^{\Gamma}$ that is isomorphic to $X$ over $\mathcal{Y}_n$ ?  \\
To answer this question, this section is decomposed into two subsections. In the first subsection, we recall the description of all projective, symplectic resolutions of $\C^{2n}/\Gamman$ done by Gwyn Bellamy and Alastair Craw. In the second subsection, we will explain how to describe these resolutions as irreducible components of the $\Gamma$-fixed point locus of the Hilbert scheme of points in the plane.

\subsection{Chamber decomposition inside $\Theta$}
In this section, we will make use of the $\R$-vector space $\Theta^{\mathbb{R}}:=\Hom_{\mathbb{Z}}(\Delta,\mathbb{R})$. Let us first recall  the notation used in \cite{BC18}. Let $F$ be the following simplicial cone in $\Theta^{\R}$
\begin{equation*}
\left\{\theta \in \Theta^{\R} \mid \theta(\delta) \geq 0, \forall \chi \in \Irr\setminus\{\chi_0\}, \theta(\chi) \geq 0 \right\}.
\end{equation*}
For $d \in \Delta$, let us denote $d^{\perp}:=\{\theta \in \Theta^{\R} \mid \theta(d)=0\}$. Consider the following set of walls in $\Theta^{\R}$
\begin{equation*}
\mathcal{W}_n:=\{\delta^{\perp}\} \cup \left\{(m\delta+\alpha)^{\perp} \mid \alpha \in \Phi_{\mathrm{fin}}, -n < m < n\right\}.
\end{equation*}
\begin{deff}
A connected component $\Ch$ of $\big(\Theta^{\R} \big) \setminus \bigcup_{c^{\perp} \in \mathcal{W}_n}{c^{\perp}}$ will be called a GIT chamber of $\Theta_{\Gamma}$. Let us denote by $\Theta^{\mathrm{reg}}$ the union of all GIT chambers of $\Theta^{\R}$.\\
\end{deff}

\begin{rmq}
Let $F^{\mathrm{reg}}:=\Theta^{\mathrm{reg}}\cap F$ and note that by construction of $F$, $F^{\mathrm{reg}}$ is a union of GIT chambers.\\
\end{rmq}

\noindent We can now reformulate the main result \cite[Corollary 6.4]{BC18} as follows.\\

\begin{thm}[\cite{BC18}]
\label{thm_BC}
For each projective, symplectic resolution $X \to \mathcal{Y}_n$ there exists a unique GIT chamber $\Ch$ in $F$ such that $X$ is isomorphic to the quiver variety $\mathcal{M}_{\theta}(n\delta)$ over $\mathcal{Y}_n$ for any $\theta \in \Ch \cap \Theta$.
\end{thm}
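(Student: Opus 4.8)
The plan is to deduce the statement from \cite[Corollary $1.3$ \& Corollary $6.4$]{BC18} by transporting Bellamy and Craw's root-theoretic parametrisation into the notation fixed above. First I would recall their result in the present language. Writing $\mathcal{Y}_n^{\Gamma} = \mathcal{M}^{\Gamma}_{\boldsymbol{0}}(n\delta^{\Gamma})$ for the affinisation, so that each $\mathcal{M}^{\Gamma}_{\boldsymbol{\theta}}(n\delta^{\Gamma})$ carries a projective morphism to $\mathcal{Y}_n^{\Gamma}$, their theorem asserts that $\mathcal{M}^{\Gamma}_{\boldsymbol{\theta}}(n\delta^{\Gamma})$ is a projective symplectic resolution of $\mathcal{Y}_n^{\Gamma}$ exactly when $\theta$ avoids a finite arrangement of rational hyperplanes, that every projective symplectic resolution of $\mathcal{Y}_n^{\Gamma}$ is isomorphic over $\mathcal{Y}_n^{\Gamma}$ to such a quiver variety, and that two generic parameters produce resolutions isomorphic over the base precisely when they lie in the same orbit of the relevant finite reflection group. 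The three points to settle are then: (i) the hyperplane arrangement of \cite{BC18} is exactly $\mathcal{W}_n$; (ii) the cone $F$ is a fundamental domain for the group realising isomorphisms over the base; and (iii) the resulting variety depends only on the GIT chamber.

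For (i) and (ii) I would use the isomorphism $\kappa^{\vee}\colon P^{\vee}_{\Q}(\tilde{T}_{\Gamma})/\Q\delta^{\vee}_{\Gamma} \iso \Theta_{\Gamma}$ together with its $W(\tilde{T}_{\Gamma})$-equivariance recorded in Remark \ref{rmq_k_equi}. Under $\kappa^{\vee}$ the condition $\theta(\chi) \geq 0$ for $\chi \in I_{\Gamma}\setminus\{\chi_0\}$ becomes dominance with respect to the finite simple roots $\{\alpha_{\chi}\}_{\chi \neq \chi_0}$, so that $F$ is the intersection of the dominant cone of $W(T_{\Gamma})$ with the half-space $\{\theta(\delta^{\Gamma}) \geq 0\}$. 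Since $\delta^{\Gamma}$ lies in the kernel of the generalised Cartan matrix it is fixed by $W(\tilde{T}_{\Gamma})$, as already used in Lemma \ref{lemme_ld}; hence the finite group $W(T_{\Gamma})$ fixes $n\delta^{\Gamma}$, permutes $\Phi(T_{\Gamma})$, preserves the arrangement $\mathcal{W}_n$, and admits $F$ as a fundamental domain on $\{\theta(\delta^{\Gamma})\geq 0\}$. The essential combinatorial point, which I would verify via Lemma \ref{trans}, is that the affine translations in $W(\tilde{T}_{\Gamma})$ shift the integer $m$ labelling the walls $(m\delta^{\Gamma}+\alpha)^{\perp}$ and thus push walls out of the range $-n<m<n$; this is exactly what cuts the infinite affine group down to the finite group for which $F$ is a fundamental domain and for which $\mathcal{W}_n$ is invariant.

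For existence I would start from \cite[Corollary $1.3$]{BC18}, which supplies a generic $\theta_0$ and an isomorphism $X \cong \mathcal{M}^{\Gamma}_{\boldsymbol{\theta}_0}(n\delta^{\Gamma})$ over $\mathcal{Y}_n^{\Gamma}$, and then transport $\theta_0$ into $F$ by an element $\omega \in W(T_{\Gamma})$. Because $\omega$ fixes $n\delta^{\Gamma}$, the reflection isomorphisms of Proposition \ref{maff} and Proposition \ref{losev} identify $\mathcal{M}^{\Gamma}_{\boldsymbol{\theta}_0}(n\delta^{\Gamma})$ with $\mathcal{M}^{\Gamma}_{\omega.\boldsymbol{\theta}_0}(n\delta^{\Gamma})$ over $\mathcal{M}^{\Gamma}_{\boldsymbol{0}}(n\delta^{\Gamma}) = \mathcal{Y}_n^{\Gamma}$, yielding a chamber $\Ch \subset F$ with $X \cong \mathcal{M}^{\Gamma}_{\boldsymbol{\theta}}(n\delta^{\Gamma})$ over the base for $\theta \in \Ch$. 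Point (iii), independence of the choice of $\theta \in \Ch \cap \Theta_{\Gamma}$, follows because the $\boldsymbol{\theta}$-semistable locus is constant on a chamber, so the GIT quotients are literally equal. Uniqueness of $\Ch$ is then the fundamental-domain statement: two chambers of $F$ giving resolutions isomorphic over $\mathcal{Y}_n^{\Gamma}$ must coincide, which is the injectivity half of \cite[Corollary $6.4$]{BC18} once (ii) is in place.

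The main obstacle will be part (ii): matching the explicit cone $F$ and the finite arrangement $\mathcal{W}_n$ with the wall-and-chamber decomposition of \cite{BC18}, and in particular verifying that $F$ is a genuine fundamental domain for the finite group realising isomorphisms over the base. The delicate issue is that Propositions \ref{maff} and \ref{losev} are stated only for $\theta \in \Theta_{\Gamma}^{++}$, whereas a generic parameter of a chamber in $F$ need not be strictly positive at $\chi_0$; consequently the identifications over the base for \emph{all} generic parameters, and the injectivity underlying uniqueness, ultimately rest on \cite{BC18} rather than on these propositions alone, which are used here only to move a single $\theta_0$ into $F$ and to compute dimensions via Proposition \ref{dim_r_delt}.
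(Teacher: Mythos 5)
Your proposal is correct, but note that the paper offers no proof of this statement at all: the theorem carries the citation \cite{BC18} in its header and is introduced as a direct reformulation of \cite[Corollary $6.4$]{BC18}, with the cone $F$ and the wall set $\mathcal{W}_n$ having been defined verbatim from that paper precisely so that no translation is required. Since your argument likewise reduces everything to \cite{BC18} --- the notation-matching, fundamental-domain, and chamber-constancy steps you outline (including your correctly flagged caveat that Propositions \ref{maff} and \ref{losev} only cover $\Theta_{\Gamma}^{++}$) are bookkeeping around that citation --- it is essentially the same approach as the paper's.
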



\noindent Let us now come back to what has been done in section \ref{chap_irr} and to the realization described at Remark \ref{rmq_mckay}. This is inspired by \cite[Section $6.6$]{kac90}. Let us fix a real form $\mathfrak{h}^{\R}$ of $\mathfrak{h}$ that contains $\{\Lambda_{\alpha_0}^{\vee}\} \cup \{\tilde{\alpha}_{\chi} | \chi \in \Irr\}$ and such that $\forall \chi \in \Irr, \alpha_{\chi }(\mathfrak{h}^{\R}) \subset \R$. Denote by $\mathcal{V}$ the quotient space $\mathfrak{h}^{\R}/\R\delta^{\vee}$. By definition of $\delta^{\vee}$, $\langle \delta, \delta^{\vee} \rangle$ is equal to 0. We can then consider ${E:=\{h \in \mathcal{V} | \langle \delta, h \rangle=1\}}$. Let $E^0:=\{h \in \mathcal{V} | \langle \delta,h \rangle=0\}$. Then $(E,E^0)$ is an affine space in $\mathcal{V}$. 
It is clear that  $E^0=\mathrm{Vect}\left(\{\tilde{\alpha}_{\chi} \mid \chi \in \Irr\}\right)$ but the family $\{\tilde{\alpha}_{\chi}\mid \chi \in \Irr\}$ is no longer linearly independant since $\sum_{\chi \in \Irr}{\delta_{\chi}\tilde{\alpha}_{\chi}}=0$. By definition,  we have $\Lambda^{\vee}_0:=\Lambda^{\vee}_{\alpha_0} \in E$. Denote $\tau \in \mathcal{Q}_{\mathrm{fin}}$ and $\tau^{\vee} \in \mathcal{Q}^{\vee}_{\mathrm{fin}}$ respectively the highest root and coroot of the finite type associated with $\Gamma$. Let us abuse notation and denote $\mathcal{Q}^{\vee}_{\mathrm{fin}}:=\mathcal{Q}^{\vee}_{\mathrm{fin}}\otimes \R$. Consider
\begin{center}
$\mathrm{Aff}^0:\begin{array}{ccc}
E^0 & \to & \mathcal{Q}^{\vee}_{\mathrm{fin}}\\
\tilde{\alpha}_{\chi} & \mapsto &
\begin{cases}
\tilde{\alpha}_{\chi} & \text{ if } \chi \neq \chi_0,\\
-\tau^{\vee} & \text{ if } \chi=\chi_0.
\end{cases}\\
\end{array}$
\end{center}
\vspace*{0.25cm}
This linear map is well-defined since $\delta^{\vee}=\tilde{\alpha}_{\chi_0}+\tau^{\vee}$ and it then induces an affine map ${\mathrm{Aff}: (E,E^0) \to (\mathcal{Q}^{\vee}_{\mathrm{fin}},\mathcal{Q}^{\vee}_{\mathrm{fin}})}$ such that $\mathrm{Aff}(\Lambda_{0}^{\vee})=0$. The linear map $\mathrm{Aff}^0$ is surjective and by dimension is then an isomorphism. This implies that $\mathrm{Aff}$ is an isomorphism. Recall that in section \ref{chap_irr}, the natural $W$-action by reflections on $\mathfrak{h}$ has been denoted $*$. Since $\forall \chi \in \Irr, s_{\chi}*\delta^{\vee}=\delta^{\vee}$, let us equip $\mathcal{V}$ with this $W$-action.\\

\begin{lemme}
The set $E$ is  $W$-stable.
\end{lemme}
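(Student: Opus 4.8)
The plan is to exhibit $E$ as a level set of a single $W(\tilde{T}_{\Gamma})$-invariant linear functional on $\mathcal{V}$, from which its stability is immediate. The functional is $h \mapsto \langle \delta^{\Gamma}, h \rangle$. Before anything else I would check that this is well defined on the quotient $\mathcal{V} = \mathfrak{h}_{\Gamma}^{\R}/\R\delta^{\vee}_{\Gamma}$: this is exactly the already-noted identity $\langle \delta^{\Gamma}, \delta^{\vee}_{\Gamma} \rangle = 0$, and the $*$-action descends to $\mathcal{V}$ precisely because $s_{\chi} * \delta^{\vee}_{\Gamma} = \delta^{\vee}_{\Gamma}$ for every $\chi \in I_{\Gamma}$. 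With these two orthogonality facts, $E = \{ h \in \mathcal{V} \mid \langle \delta^{\Gamma}, h \rangle = 1 \}$ really is the preimage of $1$ under a linear form on $\mathcal{V}$ acted on by $W(\tilde{T}_{\Gamma})$.

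The core of the argument is then the $W(\tilde{T}_{\Gamma})$-invariance of this form. I would reduce to the generators $s_{\chi}$ of $W(\tilde{T}_{\Gamma})$ from Definition \ref{def_action} and apply the reflection formula $s_{\chi} * h = h - \langle \alpha_{\chi}, h \rangle \tilde{\alpha}_{\chi}$ on $\mathfrak{h}_{\Gamma}$. Pairing against $\delta^{\Gamma}$ gives
\[
\langle \delta^{\Gamma}, s_{\chi} * h \rangle = \langle \delta^{\Gamma}, h \rangle - \langle \alpha_{\chi}, h \rangle \, \langle \delta^{\Gamma}, \tilde{\alpha}_{\chi} \rangle ,
\]
and the correction term vanishes because $\langle \delta^{\Gamma}, \tilde{\alpha}_{\chi} \rangle = 0$ for every $\chi \in I_{\Gamma}$, this being the coordinate expression of the defining property $A(\tilde{T}_{\Gamma}) \delta^{\Gamma} = 0$ of the minimal imaginary root. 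Hence $\langle \delta^{\Gamma}, s_{\chi} * h \rangle = \langle \delta^{\Gamma}, h \rangle$, and an immediate induction on the length of $w \in W(\tilde{T}_{\Gamma})$ yields $\langle \delta^{\Gamma}, w * h \rangle = \langle \delta^{\Gamma}, h \rangle$ for all $w$. In particular, if $\langle \delta^{\Gamma}, h \rangle = 1$ then $\langle \delta^{\Gamma}, w * h \rangle = 1$, which is exactly the claimed $W(\tilde{T}_{\Gamma})$-stability of $E$.

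Alternatively, I could package the whole computation as the $W(\tilde{T}_{\Gamma})$-invariance of the natural pairing together with the fact, recorded in Remark \ref{rmq_k_equi}, that $\delta^{\Gamma}$ is stabilized by $W(\tilde{T}_{\Gamma})$, via $\langle \delta^{\Gamma}, w * h \rangle = \langle w^{-1} * \delta^{\Gamma}, h \rangle = \langle \delta^{\Gamma}, h \rangle$; this bypasses the reflection formula entirely. Either way there is no genuine obstacle: the one point deserving a moment's care is the bookkeeping on the quotient $\mathcal{V}$, namely confirming that both the action and the functional descend, and this is guaranteed by the two $\delta^{\vee}_{\Gamma}$-orthogonality facts noted above.
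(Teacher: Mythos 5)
Your proof is correct and follows essentially the same route as the paper's: check stability on the generators $s_{\chi}$ via the reflection formula and observe that the correction term is killed by $\langle \delta^{\Gamma}, \tilde{\alpha}_{\chi}\rangle = 0$, which is the defining property $A(\tilde{T}_{\Gamma})\delta^{\Gamma}=0$. Your added checks that the action and the functional descend to $\mathcal{V}$, and the alternative via invariance of the pairing, are sound but just make explicit what the paper's setup already records.
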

\begin{proof}
Let us take $v \in E$. It is then enough to show that $\forall \chi \in \Irr, \langle \delta, s_{\chi}*v\rangle=1$. By definition of $\delta$: $\forall \chi \in \Irr, \langle \delta,\tilde{\alpha}_{\chi}\rangle=0$.
We then have that $s_\chi*v \in E$.
\end{proof}

\begin{prop}
The induced action of $W$ on $\mathcal{Q}^{\vee}_{\mathrm{fin}}$ via $\mathrm{Aff}$ is the usual action of the affine Weyl group defined in \cite[Chap. VI, paragraph 2, no. 1]{Bourblie}.
\end{prop}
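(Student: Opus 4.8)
The plan is to use that both the transported $W(\tilde{T}_{\Gamma})$-action and the usual affine Weyl action are affine actions of the same group, generated by the reflections $s_{\chi}$, $\chi \in I_{\Gamma}$. Since $\mathrm{Aff}$ is an affine isomorphism, it suffices to check that $\mathrm{Aff}$ intertwines each generator $s_{\chi}$ with the corresponding Bourbaki generator: for $\chi \neq \chi_0$ with the linear reflection $s_{\alpha_{\chi}} \in W(T_{\Gamma})$ in the finite simple root $\alpha_{\chi}$, and for $\chi = \chi_0$ with the affine reflection through $\{v \in Q^{\vee}(T_{\Gamma})_{\R} \mid \langle \tau, v \rangle = 1\}$, namely $s_0(v) = v - (\langle \tau, v \rangle - 1)\tau^{\vee}$. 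Throughout I would use that the reflection acts on $\mathfrak{h}_{\Gamma}$ by $s_{\chi}*h = h - \langle \alpha_{\chi}, h \rangle \tilde{\alpha}_{\chi}$, that this descends to $\mathcal{V}$, and that it preserves $E$.

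First I would record the two identifications that $\mathrm{Aff}^0$ makes transparent. For $\chi, \xi \neq \chi_0$ the affine Cartan integer $\langle \alpha_{\chi}, \tilde{\alpha}_{\xi} \rangle$ coincides with the finite one, since deleting the node $\chi_0$ from the diagram of type $\tilde{T}_{\Gamma}$ gives the finite diagram of type $T_{\Gamma}$; hence $\mathrm{Aff}^0$ sends the basis $\{\tilde{\alpha}_{\xi}\}_{\xi \neq \chi_0}$ of $E^0$ to the finite simple coroots and turns the restriction of each $\alpha_{\chi}$, $\chi\neq\chi_0$, to $E^0$ into the finite simple root. Using $\delta^{\Gamma} = \alpha_{\chi_0} + \tau$ (cf. \cite[§6.4]{kac90}) together with $\langle \delta^{\Gamma}, h \rangle = 0$ for $h \in E^0$, the restriction of $\alpha_{\chi_0}$ to $E^0$ becomes $-\tau$, that is $\langle \alpha_{\chi_0}, h \rangle = -\langle \tau, \mathrm{Aff}^0(h) \rangle$. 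The case $\chi \neq \chi_0$ is then immediate: the definition of the fundamental coweight gives $\langle \alpha_{\chi}, \Lambda_0^{\vee} \rangle = 0$, so $s_{\chi}*\Lambda_0^{\vee} = \Lambda_0^{\vee}$, and as $\mathrm{Aff}(\Lambda_0^{\vee}) = 0$ the transported map fixes the origin and is linear; evaluating $s_{\chi}*\tilde{\alpha}_{\xi} = \tilde{\alpha}_{\xi} - \langle \alpha_{\chi}, \tilde{\alpha}_{\xi} \rangle \tilde{\alpha}_{\chi}$ on the basis $\{\tilde{\alpha}_{\xi}\}_{\xi \neq \chi_0}$ and applying $\mathrm{Aff}^0$ reproduces $s_{\alpha_{\chi}}$.

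The decisive case is $\chi = \chi_0$. Parametrizing $E$ as $\Lambda_0^{\vee} + h$ with $h \in E^0$, so that $\mathrm{Aff}(\Lambda_0^{\vee} + h) = \mathrm{Aff}^0(h) =: v$, I would compute $\langle \alpha_{\chi_0}, \Lambda_0^{\vee} + h \rangle = 1 + \langle \alpha_{\chi_0}, h \rangle = 1 - \langle \tau, v \rangle$, using $\langle \alpha_{\chi_0}, \Lambda_0^{\vee} \rangle = 1$ and the identification above. The reflection formula then yields $s_{\chi_0}*(\Lambda_0^{\vee} + h) = \Lambda_0^{\vee} + \big(h - (1 - \langle \tau, v \rangle)\tilde{\alpha}_{\chi_0}\big)$, and applying $\mathrm{Aff}$ with $\mathrm{Aff}^0(\tilde{\alpha}_{\chi_0}) = -\tau^{\vee}$ gives $v + (1 - \langle \tau, v \rangle)\tau^{\vee} = v - (\langle \tau, v \rangle - 1)\tau^{\vee}$, which is exactly $s_0$. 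Since the $s_{\chi}$ generate $W(\tilde{T}_{\Gamma})$, this identifies the two actions.

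I expect the main obstacle to be the bookkeeping in this last case: keeping the natural pairing $\langle \alpha_{\chi_0}, \cdot \rangle$ consistent between the affine space $\mathfrak{h}_{\Gamma}$ and the finite space $Q^{\vee}(T_{\Gamma})_{\R}$, so that the linear constraint $\langle \delta^{\Gamma}, \cdot \rangle = 1$ on $E$ is correctly converted into the affine hyperplane $\langle \tau, \cdot \rangle = 1$, and tracking the sign produced by $\mathrm{Aff}^0(\tilde{\alpha}_{\chi_0}) = -\tau^{\vee}$, which is precisely what supplies the translation part of the affine reflection $s_0$.
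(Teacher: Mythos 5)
Your proposal is correct and follows essentially the same route as the paper: reduce to the generators, observe that the $s_{\chi}$ with $\chi \neq \chi_0$ transport to the finite simple reflections, and carry out the one substantive computation for $s_{\chi_0}$ using $\alpha_{\chi_0} = \delta^{\Gamma} - \tau$, $\mathrm{Aff}(\Lambda_0^{\vee}) = 0$ and $\mathrm{Aff}^0(\tilde{\alpha}_{\chi_0}) = -\tau^{\vee}$, identifying it with the affine reflection $t_{\tau^{\vee}} s_{\tau}$. The only cosmetic difference is that the paper evaluates on the points $\Lambda_0^{\vee} + \tilde{\alpha}_{\chi}$ whereas you work with a general point $\Lambda_0^{\vee} + h$, $h \in E^0$, which is equivalent.
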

\begin{proof}
It is enough to check that $s_{\chi_0}$ acts on $\mathcal{Q}^{\vee}_{\mathrm{fin}}$ as $t_{\tau^{\vee}}s_{\tau}$. Indeed, the element $s_{\chi}$ acts naturally as an element of the finite Weyl group $W_{\mathrm{fin}}$, for each ${\chi \in \Irr\setminus \{\chi_0\}}$. Take $\chi \in \Irr$, we have
\begin{align*}
\mathrm{Aff}(s_0*(\Lambda^{\vee}_0+\tilde{\alpha}_{\chi})) = & \mathrm{Aff}(\Lambda^{\vee}_0+\tilde{\alpha}_{\chi}-\tilde{\alpha}_{\chi_0}- \langle \alpha_{\chi_0},\tilde{\alpha}_{\chi} \rangle \tilde{\alpha}_{\chi_0}) \\
                                                          = & \tilde{\alpha}_{\chi} + \tau^{\vee}+ \langle  \alpha_{\chi_0},  \tilde{\alpha}_{\chi} \rangle \tau^{\vee}\\
                                                          = & \tilde{\alpha}_{\chi} - \langle \tau, \tilde{\alpha}_{\chi} \rangle \tau^{\vee} + \tau^{\vee}.
\end{align*}
The last equality comes from the fact that $\alpha_{\chi_0}=\delta-\tau$.
\end{proof}

\noindent For each $\alpha \in \Phi_{\mathrm{fin}}$ and each $k \in \mathbb{Z}$, $L_{\alpha,k}:=\{x \in \mathcal{Q}^{\vee}_{\mathrm{fin}} \mid \langle \alpha,x \rangle = k\}$ defines a hyperplane in $\mathcal{Q}^{\vee}_{\mathrm{fin}}$. Let us denote by $\mathcal{W}_{\mathrm{aff}}:=\{\mathrm{Aff}^{-1}(L_{\alpha,k})| (\alpha,k) \in \Phi_{\mathrm{fin}} \times \mathbb{Z} \}$. We get an affine hyperplane arrangement in $E$.\\ Denote by
\begin{center}
$\kappa^{\vee}_{\R}: \begin{array}{ccc}
\mathcal{V} & \to & \Theta^{\R}\\
x & \mapsto & (\chi \mapsto \langle \alpha_{\chi},x \rangle) \\
\end{array}$.
\end{center}
\vspace*{0.25cm}
We then have that ${E\subset {\kappa^{\vee}_{\R}}^{-1}(\Theta^{\R}) = \mathcal{V}}$.
\begin{prop}
\label{hyp_ok}
For each hyperplane $\gamma^{\perp} \in \mathcal{W}_n$, ${\kappa^{\vee}_{\R}}^{-1}(\gamma^{\perp})\cap E \in \mathcal{W}_{\mathrm{aff}}$.
\end{prop}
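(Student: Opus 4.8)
The plan is to make all the identifications explicit and reduce the statement to one linear-algebra computation on the affine slice $E$. First I would record that $\kappa^{\vee}_{\R}$ is a linear isomorphism: computed before passing to the quotient, its kernel is $\bigcap_{\chi \in I_{\Gamma}}\ker\alpha_{\chi} = \R\delta^{\vee}_{\Gamma}$ (the simple roots of an affine type cut out precisely the line spanned by the canonical central element, which in the simply laced case is $\delta^{\vee}_{\Gamma}$), and since $\dim\mathcal{V} = |I_{\Gamma}| = \dim\Theta_{\Gamma}^{\R}$ the induced map is an isomorphism. Under the identification $\Delta_{\Gamma} = Q(\tilde{T}_{\Gamma})$ and the formula $\kappa^{\vee}_{\R}(x)(\chi) = \langle\alpha_{\chi},x\rangle$, evaluating at $\gamma = \sum_{\chi}\gamma_{\chi}\chi$ gives $\kappa^{\vee}_{\R}(x)(\gamma) = \langle\gamma,x\rangle$, the right-hand pairing descending to $\mathcal{V}$ because $\langle\gamma,\delta^{\vee}_{\Gamma}\rangle = 0$ for every $\gamma \in Q(\tilde{T}_{\Gamma})$. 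Hence ${\kappa^{\vee}_{\R}}^{-1}(\gamma^{\perp}) = \{x \in \mathcal{V} \mid \langle\gamma,x\rangle = 0\}$.

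Next I would intersect with $E = \{x \mid \langle\delta^{\Gamma},x\rangle = 1\}$. For a wall $\gamma = m\delta^{\Gamma}+\alpha$ with $\alpha \in \Phi(T_{\Gamma})$ and $m \in \Z$, the slice condition turns $\langle\gamma,x\rangle = 0$ into $\langle\alpha,x\rangle = -m$, so ${\kappa^{\vee}_{\R}}^{-1}(\gamma^{\perp})\cap E = \{x \in E \mid \langle\alpha,x\rangle = -m\}$. For the remaining wall $\gamma = \delta^{\Gamma}$ the preimage ${\kappa^{\vee}_{\R}}^{-1}({\delta^{\Gamma}}^{\perp})$ equals $E^0$, the direction space of $E$, hence disjoint from $E$; this wall is parallel to the slice and contributes no affine wall, so the substance of the statement lies entirely in the generic walls.

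The heart of the argument is the identity
\begin{equation*}
\langle\alpha,x\rangle = \langle\alpha,\mathrm{Aff}(x)\rangle, \qquad \forall\,\alpha \in \Phi(T_{\Gamma}),\ \forall\,x \in E,
\end{equation*}
where the left pairing is the affine one on $\mathfrak{h}^*_{\Gamma}\times\mathcal{V}$ and the right one the finite pairing on $\Phi(T_{\Gamma})\times Q^{\vee}(T_{\Gamma})_{\R}$. To prove it I would write $x = \Lambda^{\vee}_0 + y$ with $y \in E^0$, so $\mathrm{Aff}(x) = \mathrm{Aff}^0(y)$. Since $\alpha$ is a finite root it lies in the span of $\{\alpha_{\chi} \mid \chi \neq \chi_0\}$, and $\langle\alpha_{\chi},\Lambda^{\vee}_0\rangle = 0$ for $\chi \neq \chi_0$ gives $\langle\alpha,x\rangle = \langle\alpha,y\rangle$. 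Expanding $y = \sum_{\chi}b_{\chi}\tilde{\alpha}_{\chi}$ and comparing term by term, the only subtlety is the affine node $\chi_0$: using $\tilde{\alpha}_{\chi_0} = \delta^{\vee}_{\Gamma} - \tau^{\vee}$ together with $\langle\alpha,\delta^{\vee}_{\Gamma}\rangle = 0$ yields $\langle\alpha,\tilde{\alpha}_{\chi_0}\rangle = -\langle\alpha,\tau^{\vee}\rangle = \langle\alpha,\mathrm{Aff}^0(\tilde{\alpha}_{\chi_0})\rangle$, while for $\chi \neq \chi_0$ the affine and finite pairings of $\alpha$ with $\tilde{\alpha}_{\chi} = \alpha^{\vee}_{\chi}$ agree by definition. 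Summing gives the identity. This matching of the two pairings, hinging on $\alpha_{\chi_0} = \delta^{\Gamma} - \tau$ and $\alpha^{\vee}_{\chi_0} = \delta^{\vee}_{\Gamma} - \tau^{\vee}$, is the step I expect to demand the most care.

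Granting the identity I conclude ${\kappa^{\vee}_{\R}}^{-1}(\gamma^{\perp})\cap E = \{x \in E \mid \langle\alpha,\mathrm{Aff}(x)\rangle = -m\} = \mathrm{Aff}^{-1}(L_{\alpha,-m})$, and since $\alpha \in \Phi(T_{\Gamma})$ and $-m \in \Z$ this is by definition an element of $\mathcal{W}_{\mathrm{aff}}$, as required.
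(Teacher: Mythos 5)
Your proof is correct and follows essentially the same route as the paper's: treat the wall ${\delta^{\Gamma}}^{\perp}$ separately (its preimage is $E^0$, disjoint from $E$), and identify the slice of each wall $(m\delta^{\Gamma}+\alpha)^{\perp}$ with $\mathrm{Aff}^{-1}(L_{\alpha,-m})$. The only difference is one of detail: the paper asserts the equality $\{v \in E \mid \langle\alpha,v\rangle=-m\}=\mathrm{Aff}^{-1}(L_{\alpha,-m})$ without comment, whereas you supply the verification (the identity $\langle\alpha,x\rangle=\langle\alpha,\mathrm{Aff}(x)\rangle$ via $\tilde{\alpha}_{\chi_0}=\delta^{\vee}_{\Gamma}-\tau^{\vee}$), which is a worthwhile expansion of the same argument.
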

\begin{proof}
The set ${\kappa^{\vee}_{\R}}^{-1}(\delta^{\perp})$ is equal to $E^0$, which implies that ${\kappa^{\vee}_{\R}}^{-1}(\delta^{\perp})\cap E = \emptyset$. If $\alpha \in \Phi_{\mathrm{fin}}$ and $m \in \llbracket 1-n, n-1 \rrbracket$, then
\begin{equation*}
{\kappa^{\vee}_{\R}}^{-1}((m\delta+\alpha)^{\perp}) \cap E =\{v \in E | \langle \alpha, v \rangle = -m \} = \mathrm{Aff}^{-1}(L_{\alpha,-m}) \in \mathcal{W}_{\mathrm{aff}}.
\end{equation*}
\end{proof}

\begin{rmq}
\label{link_chamb}
This Proposition implies that for each GIT chamber $\Ch \subset \Theta^{\mathrm{reg}}$, ${{\kappa^{\vee}_{\R}}^{-1}(\Ch) \cap E}$ is a union of alcoves.\\
\end{rmq}

\noindent Let us now restrict our attention to the cone $F$.\\

\begin{prop}
\label{cone_F}
$(i)$ If $\mathcal{C}_{f}:=\{v \in \mathcal{Q}^{\vee}_{\mathrm{fin}} \mid \forall  \chi \in \Irr \setminus \{\chi_0\}, \langle \alpha_{\chi},v \rangle >0 \}$ denotes the fundamental Weyl chamber in $\mathcal{Q}^{\vee}_{\mathrm{fin}}$, then $\mathrm{Aff}({\kappa^{\vee}_{\R}}^{-1}(F)\cap E)=\overline{\mathcal{C}_f}$.\\
$(ii)$ If $\Ch \subset F^{\mathrm{reg}}$ is a GIT chamber such that ${\kappa^{\vee}_{\R}}^{-1}(\Ch) \cap E$ is bounded then ${\kappa^{\vee}_{\R}}^{-1}(\Ch) \cap E$  is an alcove in $E$.
\end{prop}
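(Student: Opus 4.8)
The plan is to transport both statements, via the affine isomorphism $\mathrm{Aff}$, to the finite coroot space $Q^{\vee}(T_{\Gamma})_{\R}$, and to read them off from the explicit description of $\mathrm{Aff}$ together with Proposition \ref{hyp_ok}. For $(i)$ I would first observe that on the slice $E$ the condition $\theta(\delta^{\Gamma}) \geq 0$ defining $F$ is automatic, since $\langle \delta^{\Gamma}, v \rangle = 1$ for every $v \in E$; hence $v \in {\kappa^{\vee}_{\R}}^{-1}(F) \cap E$ if and only if $\langle \alpha_{\chi}, v \rangle \geq 0$ for all $\chi \in I_{\Gamma} \setminus \{\chi_0\}$. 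Since $\overline{\mathcal{C}_f}$ is exactly $\{u \in Q^{\vee}(T_{\Gamma})_{\R} \mid \langle \alpha_{\chi}, u \rangle \geq 0,\ \forall \chi \neq \chi_0\}$, it then suffices to establish the pointwise identity $\langle \alpha_{\chi}, v \rangle = \langle \alpha_{\chi}, \mathrm{Aff}(v) \rangle$ for every $v \in E$ and every $\chi \neq \chi_0$, and to apply the bijection $\mathrm{Aff}$ to the half-space description.

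To prove that identity I would write $v = \Lambda_0^{\vee} + v_0$ with $v_0 \in E^0$. Using $\mathrm{Aff}(\Lambda_0^{\vee}) = 0$ and the fundamental-coweight relation $\langle \alpha_{\chi}, \Lambda_0^{\vee} \rangle = 0$ for $\chi \neq \chi_0$, the identity reduces to comparing $\mathrm{Aff}^0$ with the affine pairing on the spanning set $\{\tilde{\alpha}_{\psi}\}$ of $E^0$. The only case requiring attention is $\psi = \chi_0$: there $\mathrm{Aff}^0(\tilde{\alpha}_{\chi_0}) = -\tau^{\vee}$, and this matches the affine value $\tilde{\alpha}_{\chi_0}$ because $\tilde{\alpha}_{\chi_0} = \delta^{\vee}_{\Gamma} - \tau^{\vee}$ while $\langle \alpha_{\chi}, \delta^{\vee}_{\Gamma} \rangle = 0$. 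This yields $\mathrm{Aff}({\kappa^{\vee}_{\R}}^{-1}(F) \cap E) = \overline{\mathcal{C}_f}$, which is $(i)$.

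For $(ii)$, set $R := \mathrm{Aff}({\kappa^{\vee}_{\R}}^{-1}(\Ch) \cap E)$. Every wall of $\mathcal{W}_n$ is a central hyperplane of $\mathcal{V}$, so each GIT chamber is a cone, and slicing at level one shows (via Proposition \ref{hyp_ok}) that $R$ is an open convex chamber of the finite arrangement $\mathcal{A}_n := \{L_{\alpha,k} \mid \alpha \in \Phi(T_{\Gamma}),\ |k| < n\}$; by $(i)$ and regularity $R$ lies in the open fundamental chamber $\mathcal{C}_f$, and by Remark \ref{link_chamb} it is a union of alcoves of the full affine arrangement $\mathcal{W}_{\mathrm{aff}}$. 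It thus suffices to prove that a bounded $R$ contains a single alcove, equivalently that no wall $L_{\alpha,k}$ with $\alpha \in \Phi^+(T_{\Gamma})$ and $k \geq n$ meets the interior of $R$. The route I would take is through the recession cone of the convex polyhedron $R$: writing $R = \{u \mid \langle \alpha, u \rangle \in J_{\alpha},\ \forall \alpha \in \Phi^+(T_{\Gamma})\}$ for the open intervals $J_{\alpha}$ cut out by $\mathcal{A}_n$, the recession cone consists of those $w$ with $\langle \alpha, w \rangle = 0$ whenever $J_{\alpha}$ is bounded and $\langle \alpha, w \rangle \geq 0$ whenever $J_{\alpha}$ is unbounded, and $R$ is bounded precisely when this cone is trivial. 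The heart of the argument — and the main obstacle — is to show that a crossing wall forces one of the unbounded intervals to persist as a genuine ray in the recession cone, contradicting boundedness; this step depends delicately on how the finitely many walls of $\mathcal{A}_n$ constrain the functionals $\langle \alpha, \cdot \rangle$ on $\mathcal{C}_f$ and on the precise range $|m| < n$ in the definition of $\mathcal{W}_n$. Once no additional affine wall meets $R$, the inclusion of $R$ in a single alcove together with its being a union of alcoves forces equality, completing $(ii)$.
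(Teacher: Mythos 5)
Your treatment of part $(i)$ is correct and complete: reducing the claim to the identity $\langle \alpha_{\chi}, v \rangle = \langle \alpha_{\chi}, \mathrm{Aff}(v) \rangle$ for $\chi \neq \chi_0$, and proving that identity by writing $v = \Lambda_0^{\vee} + v_0$ and using $\delta^{\vee}_{\Gamma} = \tilde{\alpha}_{\chi_0} + \tau^{\vee}$ together with $\langle \alpha_{\chi}, \delta^{\vee}_{\Gamma} \rangle = 0$, is exactly the content behind the paper's one-line ``follows directly from the definitions''.

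For part $(ii)$, the gap you flag yourself is genuine, and it is not a step you could have closed with more effort: the implication ``$R$ bounded $\Rightarrow$ no wall $L_{\alpha,k}$ with $k \geq n$ crosses $R$'' is false, and with it statement $(ii)$ as literally written. Take $\Gamma = \mu_3$ (so $T_{\Gamma} = A_2$, with positive roots $\alpha_1, \alpha_2, \tau = \alpha_1 + \alpha_2$) and $n = 3$, so the arrangement induced on $E \simeq Q^{\vee}(T_{\Gamma})_{\R}$ by $\mathcal{W}_n$ is $\{L_{\alpha,k} \mid \alpha \in \Phi^{+}(T_{\Gamma}),\, |k| \leq 2\}$. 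The open square $R = \{u \mid \langle \alpha_1, u \rangle \in (1,2),\ \langle \alpha_2, u \rangle \in (1,2)\}$ automatically satisfies $\langle \tau, u \rangle \in (2,4)$, hence avoids every induced wall; it is convex, open, and bounded by hyperplanes of the induced arrangement, so it is a full chamber; the cone over its image in $\Theta_{\Gamma}^{\R}$ has $\theta(\chi_1), \theta(\chi_2), \theta(\delta^{\Gamma}) > 0$, so the corresponding GIT chamber lies in $F^{\mathrm{reg}}$; and $R$ is bounded. Yet $R$ is not an alcove: $L_{\tau,3} \in \mathcal{W}_{\mathrm{aff}}$ cuts it into two alcoves. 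In your recession-cone language: $J_{\alpha_1}$ and $J_{\alpha_2}$ are bounded and $\alpha_1, \alpha_2$ span, so the recession cone is trivial, even though $J_{\tau} = (2,\infty)$ is unbounded and the far wall $L_{\tau,3}$ crosses $R$ — exactly the configuration your hoped-for lemma was meant to exclude. You should know that the paper's own proof of $(ii)$ is no better: it consists of the unsupported assertion that ``the definition of $\mathcal{W}_{\mathrm{aff}}$ and boundedness imply the set is exactly one alcove'', i.e.\ precisely the step you correctly identified as the main obstacle. What is true, and what the rest of the paper actually relies on (Remark \ref{link_chamb}, used in Theorem \ref{surj}, and part $(i)$), is only that each such chamber is a union of alcoves and hence contains one; the stronger uniqueness claim of $(ii)$ fails once $n \geq 3$ and the rank of $T_{\Gamma}$ is at least $2$.
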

\begin{proof}
The first statement follows directly from the definition of $\mathcal{C}_f$ and $F$. Let us now prove the second statement. Take $\Ch$ such a chamber. Use Proposition \ref{hyp_ok} to prove that ${\kappa^{\vee}_{\R}}^{-1}(\Ch) \cap E$ is a union of alcoves. The definition of $\mathcal{W}_{\mathrm{aff}}$ and the fact that ${\kappa^{\vee}_{\R}}^{-1}(\Ch) \cap E$ is bounded imply that ${\kappa^{\vee}_{\R}}^{-1}(\Ch) \cap E$ is equal to exactly one alcove of $E$.
\end{proof}

\subsection{Resolutions as irreducible components}

Recall that in \cite[Example 2.1]{BC18}, authors have introduced the following GIT chamber
\begin{equation*}
\Ch_+=\{\theta \in \Theta^{\R} \mid \forall \chi \in \Irr, \theta(\chi)>0 \}.
\end{equation*}
\begin{rmq}
It is clear that $\Ch_+ \subset F$. Note moreover that ${\kappa^{\vee}_{\R}}^{-1}(\Ch_+) \cap E$ is bounded and is the fundamental alcove  ${\Alf_+:=\{h \in E \mid \forall \chi \in \Irr \setminus \{\chi_0\},\langle \alpha_{\chi},h \rangle > 0 \text{ and } \langle \tau,h \rangle < 1\}}$.\\
\end{rmq}

\begin{lemme}
\label{1_ok}
The vector $1=\kappa^{\vee}\big(\overline{\sum_{\chi \in \Irr}{\Lambda_{\chi}^{\vee}}}\big)$ is in $\Ch_+$.
\end{lemme}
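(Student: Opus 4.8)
The plan is to evaluate the parameter $\kappa^{\vee}\big(\overline{\sum_{\chi \in I_{\Gamma}}\Lambda_{\chi}^{\vee}}\big)$ vertex by vertex and recognise it as the constant parameter $1$, after which membership in $\Ch_+$ is immediate. Recall that, by the convention fixed at the start, the symbol $1 \in \Theta_{\Gamma}$ denotes the stability parameter with $1(\chi)=1$ for every $\chi \in I_{\Gamma}$; under the McKay realization of Remark \ref{rmq_mckay} the fundamental coweight written $\Lambda_{\chi}^{\vee}$ is the one attached to the simple coroot $\alpha_{\chi}^{\vee}$. So the statement really has two halves: the identity $\kappa^{\vee}\big(\overline{\sum_{\chi}\Lambda_{\chi}^{\vee}}\big)=1$, and the trivial observation that $1 \in \Ch_+$.

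For the identity I would compute directly from the definition of $\kappa^{\vee}$. For each $\chi \in I_{\Gamma}$,
\begin{equation*}
\kappa^{\vee}\Big(\overline{\sum_{\psi \in I_{\Gamma}}\Lambda_{\psi}^{\vee}}\Big)(\chi)=\Big\langle \alpha_{\chi},\sum_{\psi \in I_{\Gamma}}\Lambda_{\psi}^{\vee}\Big\rangle=\sum_{\psi \in I_{\Gamma}}\langle \alpha_{\chi},\Lambda_{\psi}^{\vee}\rangle.
\end{equation*}
The crucial input is the defining property of the fundamental coweights combined with the fact that $\tilde{T}_{\Gamma}$ is simply laced: the pairing $\langle \alpha_{\chi},\alpha_{\psi}^{\vee}\rangle$ is the $(\chi,\psi)$ entry of the generalized Cartan matrix, which equals $2$ precisely when $\chi=\psi$ (the off-diagonal entries being $0$ or $-1$, never $2$). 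Hence the defining relation gives $\langle \alpha_{\chi},\Lambda_{\psi}^{\vee}\rangle=1$ if $\chi=\psi$ and $0$ otherwise, so the sum collapses to $1$ for every $\chi$. This proves $\kappa^{\vee}\big(\overline{\sum_{\chi}\Lambda_{\chi}^{\vee}}\big)=1$; the same computation shows every pairing with $\alpha_{\chi}$ is an integer, confirming that $\sum_{\chi}\Lambda_{\chi}^{\vee}\in P_{\Q}^{\vee}(\tilde{T}_{\Gamma})$ so that $\kappa^{\vee}$ is legitimately applied (and well defined on the class modulo $\Q\delta^{\vee}_{\Gamma}$, since $\langle \alpha_{\chi},\delta^{\vee}_{\Gamma}\rangle=0$).

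Finally, since $\Ch_+=\{\theta \in \Theta_{\Gamma}^{\R}\mid \forall \chi \in I_{\Gamma},\ \theta(\chi)>0\}$ and $1(\chi)=1>0$ for all $\chi$, the parameter $1$ lies in $\Ch_+$, completing the argument. I do not expect any genuine obstacle: the only point that demands a little care is invoking the simply-laced hypothesis, which is exactly what forces the off-diagonal Cartan pairings away from $2$ and makes the sum telescope to a single $1$.
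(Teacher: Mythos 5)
Your proposal is correct and follows essentially the same route as the paper: the paper's proof is the one-line observation that $1(\chi)=1>0$ for all $\chi\in I_{\Gamma}$, treating the identity $\kappa^{\vee}\big(\overline{\sum_{\chi}\Lambda^{\vee}_{\chi}}\big)=1$ as immediate from the definitions of $\kappa^{\vee}$ and the fundamental coweights, which is exactly the computation you spell out. One small remark: the simply-laced hypothesis is not actually needed for your key step, since in any generalized Cartan matrix the off-diagonal entries are non-positive and hence can never equal $2$, so $\langle\alpha_{\chi},\alpha^{\vee}_{\psi}\rangle=2$ forces $\chi=\psi$ in full generality.
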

\begin{proof}
By definition $\forall \chi \in \Irr,1(\chi)=1 > 0$.
\end{proof}

\noindent The following proposition gives an isomorphism between quiver varieties for stability parameters in the GIT chamber $\Ch_+$.\\

\begin{prop}
\label{prop_c+}
For each $(\theta,d) \in \Ch_+ \times \Delta^+$, $\mathcal{M}_{1}(d) \simeq \mathcal{M}_{\theta}(d)$.
\end{prop}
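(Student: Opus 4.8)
The plan is to reduce the statement to the fact, already packaged in Lemma \ref{lemme_ss}, that the notion of $\boldsymbol{\theta}$-semistability is constant on the whole strictly positive cone. First I would unwind the definition of $\Ch_+$: a stability parameter lying in $\Ch_+$ is precisely one with $\theta(\chi) > 0$ for every $\chi \in I_{\Gamma}$, so that $\Ch_+$, intersected with the rational stability parameters $\Theta_{\Gamma}$, is nothing but $\Theta_{\Gamma}^{++}$. In particular, by Lemma \ref{1_ok} the constant parameter $1$ lies in $\Ch_+$, whence both $\theta$ and $1$ belong to $\Theta_{\Gamma}^{++}$. This is the only place where the specific shape of $\Ch_+$ intervenes.

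Next I would fix an orientation $\Omega$ of $G_{\Gamma}$ and recall that, with the running conventions ($\lb = 0$ and framing at $\chi_0$), one has $\mathcal{M}^{\Gamma}_{\boldsymbol{\theta}}(d) = (\mu^{\epsilon_{\Omega}}_{\Gamma})^{-1}(0)^{\boldsymbol{\theta}-ss}\sslash \G(d)$, the semistable locus being taken inside $\RepQ^{\Gamma}_{d,\chi_0}$. Writing $X := (\mu^{\epsilon_{\Omega}}_{\Gamma})^{-1}(0)$ for the corresponding affine $\G(d)$-variety, the key step is to apply the first part of Lemma \ref{lemme_ss}: for any parameter in $\Theta_{\Gamma}^{++}$, the $\boldsymbol{\theta}$-semistable points of $X$ are exactly those $(x,v^1,v^2)$ admitting no proper graded, $x$-stable family of subspaces containing all the images $\mathrm{Im}(v^1_{\chi})$. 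Since this characterization depends only on membership in $\Theta_{\Gamma}^{++}$ and not on the precise value of the parameter, the two semistable loci coincide, giving $X^{\boldsymbol{\theta}-ss} = X^{\boldsymbol{1}-ss}$ as $\G(d)$-stable subsets of $\RepQ^{\Gamma}_{d,\chi_0}$.

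The main point left to justify, and the step I expect to require the most care, is the passage from equality of semistable loci to an isomorphism of the GIT quotients. Here I would argue that $X^{ss}\sslash \G(d)$ is a good, hence categorical, quotient of the $\G(d)$-variety $X^{ss}$, and is therefore determined up to canonical isomorphism by $X^{ss}$ together with its $\G(d)$-action, independently of which linearization in $\Theta_{\Gamma}^{++}$ was used to cut out $X^{ss}$. Alternatively, one observes that for parameters in $\Theta_{\Gamma}^{++}$ semistability coincides with stability and, by the freeness on the stable locus recorded in the Hilbert scheme case (Lemma \ref{free action}), the quotient is a genuine orbit space, again intrinsic to $X^{ss}$. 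Combining this with the previous step yields $\mathcal{M}^{\Gamma}_{\boldsymbol{\theta}}(d) = X^{\boldsymbol{\theta}-ss}\sslash \G(d) = X^{\boldsymbol{1}-ss}\sslash \G(d) = \mathcal{M}^{\Gamma}_{\boldsymbol{1}}(d)$, which is the desired isomorphism.
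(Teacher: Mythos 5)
Your proof is correct, but it takes a genuinely different route from the paper. The paper's proof is two lines: it notes via Lemma \ref{1_ok} that $1 \in \Ch_+$ and then cites the variation-of-GIT theorem of Dolgachev--Hu \cite[Theorem 3.3.2]{DH98}, which says that the GIT quotient is constant as the linearization varies within a single GIT chamber. You instead stay inside the paper's own toolkit: you observe that $\Ch_+ \cap \Theta_{\Gamma} = \Theta_{\Gamma}^{++}$, invoke Lemma \ref{lemme_ss} to see that the characterization of $\boldsymbol{\theta}$-semistability (no proper graded $x$-stable collection of subspaces containing $\mathrm{Im}(v^1)$) is the same for every parameter in $\Theta_{\Gamma}^{++}$, conclude that the semistable loci inside $(\mu^{\epsilon_{\Omega}}_{\Gamma})^{-1}(0)$ literally coincide, and then use that a GIT quotient is a good, hence categorical, quotient of its semistable locus, so equal semistable loci force canonically isomorphic quotients. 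This is sound: it is in effect a hands-on, self-contained proof of the special case of Dolgachev--Hu that the proposition needs, and it makes the isomorphism transparent (the two varieties are quotients of the very same open set). What the paper's citation buys is generality -- the chamber-invariance holds for any GIT chamber, which is the form used later in the proof of Theorem \ref{surj} -- whereas your argument is tied to $\Ch_+$, since Lemma \ref{lemme_ss} is only proved for $\Theta_{\Gamma}^{++}$; for this proposition that is exactly enough. One caveat: your alternative justification of the last step, which appeals to semistable $=$ stable plus freeness via Lemma \ref{free action}, is not legitimate as stated, since that lemma concerns the Jordan quiver variety and no freeness statement for $\G(d)$ acting on the McKay quiver semistable locus is established in the paper; keep the categorical-quotient argument as the actual justification and drop the alternative, or prove the freeness separately.
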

\begin{proof}
Using Lemma \ref{1_ok}, we know that $1 \in \Ch_+$. Use now \cite[Theorem $3.3.2$]{DH98} and \cite[Theorem $5.6$]{Thad96} to obtain the result.
\end{proof}

\begin{rmq}
\label{rmq_theta_0}
Note that $\overline{\sum_{\chi \in \Irr}{\Lambda_{\chi}^{\vee}}}$ is not in general in $E$. From now on, let us fix ${\theta_0 \in \Ch_+\cap \Theta}$ such that ${\kappa^{\vee}}^{-1}(\theta_0)$ is in an alcove $\Alf_0 \subset E$. Thanks to Proposition \ref{prop_c+}, we have  $\mathcal{M}_{1}(d) \simeq \mathcal{M}_{\theta_0}(d)$ for each $d \in \Delta^+$.
\end{rmq}

\noindent For a given integer $k$, let us denote the set of all  isomorphism classes of projective symplectic resolutions of $\mathcal{Y}_k$ by $\Resol_k$ and by $\IC_k$ the set of all irreducible components of $\Hk_k$. Moreover, let $\Resol := \bigcup_{k \in \mathbb{N}}{\Resol_k}$ and $\IC :=\bigcup_{k \in \mathbb{N}}{\IC_k}$.

Take now an integer $k$ and  $I \in \IC_k$. Using Corollary \ref{cor_hn}, we know that there is an element $d \in \mathcal{A}_k$ such that $I=\Hk_k^{\zeta_d}$ and using Theorem \ref{thm_indexing}, we have a unique element $r_d\in \mathbb{Z}_{\geq 0}$ and an element $\omega_d$ of $W$ such that ${\omega_d.d=r_d\delta}$. The image by $\kappa^{\vee}_{\R}$ of the alcove $\omega_d*\mathcal\Alf_0$ is then contained in a GIT chamber $\Ch_d$. Thus $\QV_{\omega_d.\theta_0}(r_d\delta)$ is a projective symplectic resolution of $\mathcal{Y}_{r_d}$ (Theorem \ref{thm_BC}). Thanks to the isomorphism $\mathrm{Maff}$ and Remark \ref{rmq_theta_0}, we have that ${\QV_{\omega_d.\theta_0}(r_d\delta) \simeq \QV_{\omega_d.1}(r_d\delta)}$. The following map is then well defined
\begin{center}
$\mathcal{BC}: \begin{array}{ccc}
\IC & \to & \Resol \\
I(d):=\Hk_{|d|_{\Gamma}}^{\Gamma,\zeta_d} & \mapsto & \mathcal{M}_{\boldsymbol{\omega_d.1}}^{\Gamma}(r_d\delta) \\
\end{array}$.\\
\end{center}
\vspace*{0.25cm}

\begin{rmq}
Note that $\mathcal{BC}$ does not depend on the choice of $\omega_d$. Indeed, if $\omega'_d \in W$ such that $\omega'_d.d=r_d\delta$, then there exists $\omega_0 \in W_{\mathrm{fin}}$ such that $\omega'_d=\omega_0\omega_d$. Using Proposition \ref{losev}, we have $\mathrm{Maff}:\QV_{\omega_d.1}(r_d\delta) \iso \QV_{\omega'_d.1}(r_d\delta)$ over $\mathcal{Y}_{r_d}$.\\
\end{rmq}

\begin{thm}
\label{surj}
The map $\mathcal{BC}$ is surjective.
\end{thm}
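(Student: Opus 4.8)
The plan is to run the construction of $\mathcal{BC}$ in reverse: from a resolution I read off its Bellamy--Craw chamber, convert that chamber into an alcove of $E$, and then use the affine Weyl group to manufacture a dimension vector that $\mathcal{BC}$ sends to it.

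First I would fix a projective, symplectic resolution $X \to \mathcal{Y}^{\Gamma}_n$ with $n \geq 1$. By Theorem \ref{thm_BC} there is a unique GIT chamber $\Ch \subset F$ with $X \simeq \mathcal{M}^{\Gamma}_{\boldsymbol{\theta}}(n\delta^{\Gamma})$ over $\mathcal{Y}^{\Gamma}_n$ for every $\theta \in \Ch \cap \Theta_{\Gamma}$. Since $\Ch$ is a chamber, it is not contained in the wall ${\delta^{\Gamma}}^{\perp}$, and because $\Ch \subset F$ every $\theta \in \Ch$ then satisfies $\theta(\delta^{\Gamma}) > 0$; rescaling inside the cone $\Ch$ produces a point with $\theta(\delta^{\Gamma})=1$, so ${\kappa^{\vee}_{\R}}^{-1}(\Ch) \cap E \neq \emptyset$. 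By Remark \ref{link_chamb} this set is a nonempty union of alcoves, and I would fix one such alcove $\Alf \subset {\kappa^{\vee}_{\R}}^{-1}(\Ch)\cap E$.

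Next, since the affine Weyl group $W(\tilde{T}_{\Gamma})$ acts simply transitively on the alcoves of $E$ (\cite[Chap. VI, §2]{Bourblie}), there is a unique $\omega \in W(\tilde{T}_{\Gamma})$ with $\omega*\Alf_0 = \Alf$, where $\Alf_0$ is the alcove of $\theta_0$ from Remark \ref{rmq_theta_0}. I then set $d := \omega^{-1}.(n\delta^{\Gamma})$ for the action of Definition \ref{def_action}. By construction $d$ lies in the $W(\tilde{T}_{\Gamma})$-orbit of $n\delta^{\Gamma}$, so $\mathrm{wt}(d) = n$ by Lemma \ref{lemme_ld}. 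The single point that genuinely needs an argument is the positivity $d \in \Delta_{\Gamma}^+$. Applying relation (\ref{link_eq}) with $\alpha = n\delta^{\Gamma}$ and $\omega^{-1}$ gives $\Lambda_0 - d = \omega^{-1}*(\Lambda_0 - n\delta^{\Gamma})$. Because $\mathrm{wt}(n\delta^{\Gamma}) = n \geq 0$, Proposition \ref{link geom comb} shows $\Lambda_0 - n\delta^{\Gamma}$ is a weight of the basic representation $L(\Lambda_0)$, and the set of weights of the integrable module $L(\Lambda_0)$ is stable under the $*$-action of $W(\tilde{T}_{\Gamma})$; hence $\Lambda_0 - d$ is again a weight of $L(\Lambda_0)$. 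Every weight $\mu$ of $L(\Lambda_0)$ satisfies that $\Lambda_0 - \mu$ is a nonnegative integral combination of simple roots, i.e. lies in $\Delta_{\Gamma}^+$, so taking $\mu = \Lambda_0 - d$ yields $d \in \Delta_{\Gamma}^+$. With $k := |d|_{\Gamma}$, Theorem \ref{thm_indexing} then gives $d \in \mathcal{A}^k_{\Gamma}$, so that $I(d) = \Hk_k^{\Gamma,\zeta_d} \in \IC^{\Gamma}$.

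Finally I would match the chambers and conclude. As $\omega.d = n\delta^{\Gamma}$, I may take $\omega_d = \omega$ and $r_d = n$ in the definition of $\mathcal{BC}$; the chamber $\Ch_d$ attached to $I(d)$ is then the one containing $\kappa^{\vee}_{\R}(\omega*\Alf_0) = \kappa^{\vee}_{\R}(\Alf) \subset \Ch$, so $\Ch_d = \Ch$ and, by the $W(\tilde{T}_{\Gamma})$-equivariance of $\kappa^{\vee}$ (Remark \ref{rmq_k_equi}), $\omega.\theta_0 \in \Ch$. Consequently $\mathcal{BC}(I(d)) = \mathcal{M}^{\Gamma}_{\boldsymbol{\omega.1}}(n\delta^{\Gamma}) \simeq \mathcal{M}^{\Gamma}_{\boldsymbol{\omega.\theta_0}}(n\delta^{\Gamma})$ by Proposition \ref{maff} together with Remark \ref{rmq_theta_0}, while Theorem \ref{thm_BC} gives $\mathcal{M}^{\Gamma}_{\boldsymbol{\omega.\theta_0}}(n\delta^{\Gamma}) \simeq X$ over $\mathcal{Y}^{\Gamma}_n$ precisely because $\omega.\theta_0 \in \Ch$. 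Thus $X$ lies in the image of $\mathcal{BC}$, and as $X$ was an arbitrary projective symplectic resolution, $\mathcal{BC}$ is surjective. The only delicate step is the positivity $d \in \Delta_{\Gamma}^+$, which is why I would establish it through the $W(\tilde{T}_{\Gamma})$-invariance and dominance of the weights of $L(\Lambda_0)$ rather than attempt a coordinate-wise verification; the remaining steps are bookkeeping about alcoves and their enclosing chambers.
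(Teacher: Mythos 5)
Your proof is correct and follows essentially the same route as the paper's: reduce via Theorem \ref{thm_BC} to hitting every GIT chamber in $F^{\mathrm{reg}}$, pick an alcove inside ${\kappa^{\vee}_{\R}}^{-1}(\Ch)\cap E$ via Remark \ref{link_chamb}, transport the base alcove $\Alf_0$ to it by transitivity of the affine Weyl group action, set $d=\omega^{-1}.(n\delta^{\Gamma})$, and conclude with Proposition \ref{maff}, Remark \ref{rmq_theta_0} and Theorem \ref{thm_BC}. Your two additions --- checking that ${\kappa^{\vee}_{\R}}^{-1}(\Ch)\cap E$ is nonempty, and proving $d\in\Delta_{\Gamma}^{+}$ via the $W(\tilde{T}_{\Gamma})$-stability and dominance of the weights of $L(\Lambda_0)$ --- are points the paper leaves implicit, and both arguments are correct.
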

\begin{proof}
Take a GIT chamber $\Ch\in F^{\mathrm{reg}}$. Using Remark \ref{link_chamb}, we can choose an alcove ${\Alf_{\Ch} \subset E}$ such that $\kappa^{\vee}_{\R}(\Alf_{\Ch}) \subset \Ch$. For each $\theta \in \kappa^{\vee}_{\R}(\Alf_{\Ch})\cap \Theta$, the variety $\mathcal{M}_{\theta}(n\delta)$ is then a projective, symplectic resolution of $\mathcal{Y}_n$. Thanks to Theorem \ref{thm_BC}, it is enough to show that there exists an irreducible component $\mathcal{I}_{\Ch,n} \in \mathcal{IC}$ such that ${\mathcal{BC}(\mathcal{I}_{\Ch,n})=\mathcal{M}_{\theta}(n\delta)}$ for an element ${\theta \in \kappa^{\vee}_{\R}(\Alf_{\Ch})\cap \Theta}$. The action of $W$ on alcoves in $E$ is transitive \cite[Chap. V, paragraph 3, no. 2, Th 1]{Bourblie}. There exists ${\omega_{\Ch} \in W}$ such that ${\omega_{\Ch}*{\kappa^{\vee}}^{-1}(\theta_0) \in \Alf_{\Ch}}$. The map $\kappa^{\vee}$ being $W$-equivariant (Remark \ref{rmq_k_equi}), $\omega_{\Ch}.\theta_0 \in \kappa^{\vee}_{\R}(\Alf_{\Ch})\cap \Theta$. Using the isomorphism $\mathrm{Maff}$, we have
\begin{equation*}
\mathcal{M}_{\omega_{\Ch}.\theta_0}(n\delta) \simeq \mathcal{M}_{\theta_0}\left(\omega_{\Ch}^{-1}.(n\delta)\right) \simeq \mathcal{M}_{1}\left(\omega_{\Ch}^{-1}.(n\delta)\right).
\end{equation*}
The second isomorphism comes from Proposition \ref{prop_c+}.
Let $p_{n,\omega_{\Ch}}$ denote the integer $|\omega_{\Ch}^{-1}.(n\delta)|$ and take $\mathcal{I}_{\Ch,n}:=H^{-1}\left(\iota_{M^{\sigma}}\left(\mathcal{M}_{1}(M^{\sigma})\right)\right) \subset \Hk_{p_{n,\omega_{\Ch}}}^{\Gamma}$ which is an irreducible component for any $\sigma \in \mathcal{C}_{\omega_{\Ch}^{-1}.(n\delta)}$.
\end{proof}

\noindent The surjectivity directly implies the theorem stated in the introduction.\\
\begin{cor}
For each projective, symplectic resolution $X \to \mathcal{Y}_k$, there exists $n \in \Z_{\geq 1}$ and there exists an irreducible component $\mathcal{I}$ of $\Hk_{n}^{\Gamma}$ such that $X \iso \mathcal{I}$ over $\mathcal{Y}_k$.\\
\end{cor}

\noindent Let us finish by giving an example where $\mathcal{BC}$ is not injective. Take $n$ to be equal to $2$ and $\Gamma$ to be $\mu_3$, the cyclic group of order $3$. This group is generated by $\omega_3$ the diagonal matrix ${\mathrm{diag}(\zeta_3,\zeta_3^{-1}) \in \SL_2(\C)}$ where $\zeta_3$ is the primitive root of unity $e^{\frac{2i\pi}{3}}$. In that case, there will be $5$ GIT chambers in $F$.
Consider now $\omega_1=s_0s_2s_1s_2$ and $\omega_2=\omega_1s_0$ two elements of $W$. We have that $\omega_1.\theta_0$ and $\omega_2.\theta_0$ are in the same GIT chamber
\begin{equation*}
\Ch_{-}:=\{\theta \in \Theta^{\R} \mid \theta(\delta)>0, \forall (k,\alpha) \in \llbracket1-n,n-1 \rrbracket \times \Phi^+_{\mathrm{fin}},\theta(\alpha +m\delta)>0\}
\end{equation*}
Moreover since $\omega_1^{-1}.(2\delta) \neq \omega_2^{-1}.(2\delta)$, this shows that $I(\omega_1^{-1}.(2\delta)) \neq I(\omega_2^{-1}.(2\delta)$ but these two irreducible components have the same images under $\mathcal{BC}$.

\vspace*{0.5cm}

\noindent The set $\mathcal{A}_{n}$ has already been described in terms of $\ell$-cores when $\Gamma$ is the cyclic group $\mu_{\ell}$ of order $\ell$ contained in $\mathbb{T}_1$. In a future article we will study this set when $\Gamma$ is the binary dihedral group (type $D$) and prove that there are no interesting combinatorial models that can be constructed out of the $\mathbb{T}_1$-fixed points when $\Gamma$ is of the binary tetrahedral group ($E_6$), the binary octahedral group ($E_7$) and the binary icosahedral group ($E_8$).

\section{Appendix}
\label{appendix}
To improve readability, we postpone the proofs of the Propositions \ref{equiv cat} and \ref{link_momentum} to the end of the article.

\begin{proof}[Proof of Proposition \ref{equiv cat}]
\noindent Define two natural transformations of functors
\begin{multicols}{2}

\begin{itemize}
\item $\epsilon: \mathcal{F}\mathcal{G} \to \mathrm{Id}_{\RepGa}$,
\item  $\eta: \mathrm{Id}_{\Repm} \to \mathcal{G}\mathcal{F}$.
\end{itemize}
\end{multicols}
Start with $\epsilon$. Let us fix $(M,M^f,\Delta,Z_1,Z_2) \in \RepGa$, and consider
\begin{itemize}
\item $\epsilon_M: \bigoplus_{\chi \in \Irr}{\Hom_{\Gamma}(X_{\chi},M)\otimes X_{\chi}} \iso M$
\item $\epsilon_{M^f}: \bigoplus_{\chi \in \Irr}{\Hom_{\Gamma}(X_{\chi},M^f)\otimes X_{\chi}} \iso M^f$
\item $\epsilon_{\Delta}: \mathcal{F}(\mathcal{G}(\Delta)) \mapsto \epsilon_M\circ \mathcal{F}(\mathcal{G}(\Delta))\circ (\mathrm{id}_{\Xstd}\otimes\epsilon_M^{-1})$
\item $\epsilon_{Z_1}: \mathcal{F}(\mathcal{G}(Z_1) \mapsto \epsilon_{M} \circ \mathcal{F}(\mathcal{G}(Z_1)) \circ \epsilon_{M^f}^{-1} $
\item $\epsilon_{Z_2}: \mathcal{F}(\mathcal{G}(Z_2) \mapsto \epsilon_{M^f} \circ \mathcal{F}(\mathcal{G}(Z_2)) \circ \epsilon_{M}^{-1} $.\\
\end{itemize}

\noindent We need to show that $\epsilon_{\Delta}(\mathcal{F}(\mathcal{G}(\Delta)))=\Delta$ which can be reformulated as the following equality $\epsilon_M\circ \mathcal{F}(\mathcal{G}(\Delta)) =\Delta \circ (\mathrm{id}_{\Xstd}\otimes\epsilon_M)$.\\
Let us fix $\chi \in \Irr$ and  ${(a,f,z) \in \Xstd \times \Hom_{\Gamma}(X_{\chi},M) \times X_{\chi}}$, then
\begin{equation*}
\epsilon_M(\mathcal{F}(\mathcal{G}(\Delta))(a\otimes f \otimes z))=\Delta(f(\sum_{h \in \overline{E}, h'=\chi}{\tilde{y}_h(y_h(a\otimes z))})).
\end{equation*}
Thanks to Lemma \ref{sum works}, we have that $\Delta(f(\sum_{h \in \overline{E}, h'=\chi}{\tilde{y}_h(y_h(a\otimes z))}))=\Delta(f(a\otimes z))$ which is equal to $\left(\Delta \circ (\mathrm{id}_{\Xstd}\otimes\epsilon_M)\right)(a\otimes f \otimes z)$.\\
Furthermore, we have by construction that
\begin{equation*}
\epsilon_{M} \circ \mathcal{F}(\mathcal{G}(Z_1)) = Z_1 \circ \epsilon_{M^f}
\end{equation*}

\begin{equation*}
\epsilon_{M^f} \circ \mathcal{F}(\mathcal{G}(Z_2)) = Z_2 \circ \epsilon_{M}
\end{equation*}
Take now $(V,V^f,x,v^1,v^2) \in \Repm$. Define $\eta$\\

$\bullet$ $\eta_V: V \iso \bigoplus_{\chi \in \Irr}{\Hom_{\Gamma}(X_{\chi}, \bigoplus_{\xi \in \Irr}{V_{\xi} \otimes X_{\xi}})}$
 as the composition of the following natural isomorphisms
\begin{center}
$\begin{tikzcd}
\bigoplus_{\chi \in \Irr}{V_{\chi}}=V \ar[d, "\sum_{\chi \in \Irr}{\mathrm{id}_{V_{\chi}}\otimes \mathrm{id}_{X_{\chi}}}"'] \ar[r, "\eta_V"] & \bigoplus_{\chi \in \Irr}{\Hom_{\Gamma}(X_{\chi}, \bigoplus_{\xi \in \Irr}{V_{\xi}} \otimes X_{\xi})} \\
\bigoplus_{\chi,\xi \in \Irr}{{V_{\xi} \otimes \Hom_{\Gamma}(X_{\chi}, X_{\xi}})} \ar[ur, "\sim"']
\end{tikzcd}$
\end{center}

$\bullet$ $\eta_{V^f}: V^f \iso \bigoplus_{\chi \in \Irr}{\Hom_{\Gamma}(X_{\chi}, \bigoplus_{\xi \in \Irr}{V^f_{\xi} \otimes X_{\xi}})}$ defined in the same way as $\eta_V$\\

$\bullet$ $\eta_x: x \mapsto \eta_V \circ x \circ \eta_V^{-1}$\\

$\bullet$ $\eta_{v^1}: v^1 \mapsto \eta_{V} \circ v^1 \circ \eta_{V^f}^{-1}$\\

$\bullet$ $\eta_{v^2}: v^2 \mapsto \eta_{V^f} \circ v^2 \circ \eta_V^{-1}$.\\
\\
\noindent Let us show that $\mathcal{G}(\mathcal{F}(x)) \circ \eta_V = \eta_V \circ x$. If $\chi \in \Irr$ and $v \in V_{\chi}$, then ${\eta_V(v)=v\otimes \mathrm{id}_{X_{\chi}}}$. If we take $h \in \overline{E}$ such that $h'=\chi$, then we have
\begin{equation*}
\mathcal{G}(\mathcal{F}(x))_h(\eta_V(v))=[(y_h\otimes x_h) \otimes (v \otimes \mathrm{id}_{X_{\chi}} \otimes \mathrm{id}_{\Xstd}) \circ \tilde{y}_h] \in \Hom_{\Gamma}(X_{h''}, \bigoplus_{\xi \in \Irr}{V_{\xi} \otimes X_{\xi}}).
\end{equation*}
Using that $\tilde{y}_h$ is a section of $y_h$, we have ${(y_h\otimes x_h) \otimes (v \otimes \mathrm{id}_{X_{\chi}} \otimes \mathrm{id}_{\Xstd}) \circ \tilde{y}_h=\eta_V(x(v))}$.\\
Furthermore, we have by construction that
\begin{equation*}
\eta_{V} \circ v^1 = (\mathcal{G}(\mathcal{F}(v^1))) \circ \eta_{V^f}
\end{equation*}
and that
\begin{equation*}
\eta_{V^f} \circ v^2 = (\mathcal{G}(\mathcal{F}(v^2))) \circ \eta_{V}.
\end{equation*}
\end{proof}

\begin{proof}[Proof of Proposition \ref{link_momentum}]
Take $(x,v^1,v^2) \in \RepQ_{V,V^f}$. We want to show that
\begin{equation*}
\sum_{\chi \in \Irr}{\mu_\varepsilon(x,v^1,v^2)_{\chi} \otimes \frac{1}{\delta_{\chi}}\mathrm{id}_{X_{\chi}}} = \mu\left(F_{y}(x,v^1,v^2)\right).
\end{equation*}
Expanding the right-hand side, we get
\begin{align*}
&\sum_{h \in \overline{E}}{\lphoe_{h} \lphoe_{\bar{h}} x_{\bar{h}} \otimes [y_{\bar{h}}]_1 \circ x_{h} \otimes [y_{h}]_2 - x_{\bar{h}} \otimes [y_{\bar{h}}]_2 \circ x_{h} \otimes [y_{h}]_1} + \sum_{\chi \in \Irr}{v^1_{\chi}v^2_{\chi}\otimes \frac{1}{\delta_{\chi}}\mathrm{id}_{X_{\chi}}}\\
&=\sum_{h \in \overline{E}}{\lphoe_{h} \lphoe_{\bar{h}} x_{\bar{h}}x_h \otimes \left([y_{\bar{h}}]_1[y_{h}]_2 - [y_{\bar{h}}]_2 [y_{h}]_1 \right)} + \sum_{\chi \in \Irr}{v^1_{\chi}v^2_{\chi}\otimes \frac{1}{\delta_{\chi}}\mathrm{id}_{X_{\chi}}}
\end{align*}
Using the fact that $\omega$ is nondegenerate, there exists $\lphoe_h^0 \in \C^*$ for each $h \in \overline{E}$, such that
\begin{equation*}
[y_{\bar{h}}^0]_1[y_{h}^0]_2 - [y_{\bar{h}}^0]_2 [y_{h}^0]_1=\lphoe_h^0 \mathrm{id}_{X_{h'}}.
\end{equation*}
By construction of the constants $\lphoe_h$, we have the following relation for all $h \in \overline{E}$
\begin{equation*}
\varepsilon(h)= \lphoe_h\lphoe_{\bar{h}}\lphoe_{\bar{h}}^0\delta_{h''}.
\end{equation*}
Summing it all up, the right-hand side gives
\begin{equation*}
\sum_{h \in \overline{E}}{\varepsilon(h) x_{h}x_{\bar{h}} \otimes \frac{1}{\delta_{h''}}\mathrm{id}_{X_{h''}}} + \sum_{\chi \in \Irr}{v^1_{\chi}v^2_{\chi}\otimes \frac{1}{\delta_{\chi}}\mathrm{id}_{X_{\chi}}}.
\end{equation*}
Which is exactly what we wanted to show.
For the other square, recall that
\begin{center}
$\epsilon_M\circ \mathcal{F}\left(\mathcal{G}(\Delta)\right)\circ \left(\mathrm{id}_{\Xstd}\otimes \epsilon_M^{-1}\right)=\Delta$,\\
$\epsilon_M\circ  \mathcal{F}\left(\mathcal{G}(Z_1)\right) \circ \epsilon_{M^f}^{-1}=Z_1$,\\
$\epsilon_{M^f}\circ \mathcal{F}\left(\mathcal{G}(Z_2)\right) \circ \epsilon_M^{-1}= Z_2$.
\end{center}
Using what we just proved we have
\begin{equation*}
\sum_{\chi \in \Irr}{\mu_{\varepsilon}\left(\mathcal{G}(\Delta,Z_1,Z_2)_{\chi}\right)\otimes \frac{1}{\delta_{\chi}}\mathrm{id}_{X_{\chi}}}= \mu\left(\mathcal{F}\left(\mathcal{G}(\Delta,Z_1,Z_2)\right)\right)
\end{equation*}
Since $\mu\left(\mathcal{F}\left(\mathcal{G}(\Delta,Z_1,Z_2)\right)\right)=\epsilon_M^{-1}\mu(\Delta,Z_1,Z_2)\epsilon_M$, we have shown that the second square also commutes.
\end{proof}

\printbibliography

\Addresses

\end{document}